\documentclass{siamart251216}

\usepackage{amssymb,mathtools,bm}
\usepackage{enumitem}
\usepackage{algpseudocode}
\usepackage{booktabs}
\usepackage{graphicx}
\usepackage{placeins}
\usepackage{subcaption}
\usepackage{tikz}
\usetikzlibrary{arrows.meta,calc,positioning}

\definecolor{expGreen}{HTML}{2CA02C}
\definecolor{expBlue}{HTML}{1F77B4}
\definecolor{expRed}{HTML}{D62728}
\definecolor{expYellow}{HTML}{BCBD22}

\newsiamthm{assumption}{Assumption}
\newsiamthm{claim}{Claim}
\newsiamremark{remark}{Remark}

\AddToHook{env/remark/begin}{\crefalias{theorem}{remark}}
\newcommand{\suppref}[1]{Appendix~\ref{#1}}

\newcommand{\mtx}[1]{\mathbf{#1}}
\newcommand{\msf}[1]{\mathsf{#1}}
\newcommand{\norm}[1]{\left\lVert #1 \right\rVert}
\newcommand{\normtwo}[1]{\norm{#1}_{2}}
\newcommand{\normfro}[1]{\norm{#1}_{F}}
\newcommand{\R}{\mathbb{R}}
\newcommand{\C}{\mathbb{C}}
\newcommand{\field}{\mathbb{F}}

\newcommand{\rank}{\mathrm{rank}}

\newcommand{\range}{\operatorname{range}}

\newcommand{\pinv}[1]{#1^\dagger}
\newcommand{\diag}{\operatorname{diag}}

\newcommand{\E}{\mathbb{E}}

\newcommand{\near}{\mathcal{N}}
\newcommand{\blockrestrict}[2]{\left[#1\right]_{#2}}

\title{Quasi-Optimal Randomized Recovery of \texorpdfstring{$\mathcal{H}^2$}{H2} Matrices}
\author{Anna Yesypenko\thanks{The Department of Mathematics, The Ohio State
University (\email{yesypenko.1@osu.edu}).}}
\headers{Quasi-Optimal Randomized Recovery of \texorpdfstring{$\mathcal{H}^2$}{H2} Matrices}{A. Yesypenko}

\begin{document}

\maketitle

\begin{abstract}
An $\mathcal H^2$ matrix represents well-separated interactions using nested
low-rank bases while retaining nearby interactions explicitly.  We consider
the problem of constructing an explicit, strongly admissible $\mathcal H^2$
approximation using only products with the matrix and its adjoint, and develop
two randomized recovery algorithms on an adaptive geometric
hierarchy.

Carried recovery begins with a single pair of dense Gaussian test
matrices, without hierarchy-specific probing, and updates the resulting
forward--adjoint sketches across levels. All operator samples can
therefore be acquired in one batch. Resketched recovery applies the
same recovery procedure using fresh Gaussian test matrices at each
level. For this variant, we prove a global quasi-optimal bound on the
expected Frobenius error relative to the best approximation with the
prescribed hierarchy, rank bounds, and retained near-neighbor pattern.

Carried recovery requires only $O(k+p)$ applications of the matrix
and its adjoint, independent of tree depth, whereas resketched recovery
requires $O((k+p)\log(N/k))$ on trees of logarithmic depth.
Here $N$ is the matrix dimension, $k$ the maximum prescribed
box rank, and $p$ the oversampling parameter. These bounds assume
bounded near-neighbor counts and $O(k)$ local dimensions.  Experiments on Laplace, Dirichlet-to-Neumann, and Helmholtz operators in two and three dimensions
with up to $1.28$ million unknowns show that, with sufficient
oversampling, carried recovery achieves nearly refinement-independent
error and accuracy comparable to resketched recovery while using
many fewer operator applications. Its compressed application
cost scales linearly for a fixed rank.
\end{abstract} \begin{keywords}
hierarchical matrices, randomized numerical linear algebra,
matrix-free recovery, strong admissibility
\end{keywords}

\section{Introduction}
\label{sec:intro}

Large dense matrices arise throughout scientific computing, particularly from discretizations of nonlocal operators. Although dense, many such matrices inherit  structure that admits efficient approximate representations. Partitioning the degrees of freedom geometrically exposes numerically low-rank interactions between well-separated groups, while the comparatively few nearby interactions are stored densely. The fast multipole method \cite{greengard1987fast,greengard1997new} and hierarchical matrix representations \cite{borm2003introduction,hackbusch1999sparse,hackbusch2015hierarchical} exploit this distinction. Hierarchical matrix formats can reduce storage and
matrix--vector application costs to nearly linear complexity. For suitable problems, the
$\mathcal H^2$ format further exploits nested low-rank structure across scales, enabling linear storage and matrix--vector product cost \cite{ballani2016hierarchical,borm2010efficient,hackbusch2002data}.

In many applications, however, the matrix is not explicitly available. Instead, one has access only to routines for applying $\mtx A$ and $\mtx A^*$, for example through partial differential equation (PDE) or integral-equation solvers, or other implicit operator evaluations. Individual matrix entries may be expensive to obtain or unavailable altogether. This motivates the matrix-free recovery problem considered here: construct an explicit $\mathcal H^2$ approximation using only matrix--vector products with $\mtx A$ and $\mtx A^*$, while minimizing the number of such products.

The difficulty is that hierarchical structure is local and multiscale,
whereas a matrix--vector product is global.  Multiplication by a dense test matrix
simultaneously mixes interactions across multiple spatial scales.
Multilevel recovery methods can separate these interactions by
processing the hierarchy one level at a time. However, forming
fresh sketches at successive levels makes the required number
of matrix--vector products grow with the depth of the hierarchy.  This cost is particularly important when each application of
$\mtx A$ or $\mtx A^*$ requires an expensive numerical solve.

\pagebreak{}

We develop two closely related randomized algorithms for matrix-free
$\mathcal H^2$ recovery on an adaptive geometric hierarchy. Our contributions are:

\begin{itemize}

\item[\textbf{C1.}]
\textbf{Carried randomized recovery.}
We introduce carried recovery, which constructs an explicit
$\mathcal H^2$ representation from one forward--adjoint sketch pair.
The initial test matrices are fully dense Gaussian matrices and require no
hierarchy-specific probing pattern.  Under the geometric assumptions and rank conditions in
\Cref{subsec:recovery-complexity}, the method requires
$O(k+p)$ queries to each of $\mtx A$ and $\mtx A^*$,
independent of tree depth. Here $k$ is the maximum prescribed box rank and $p$ is
the oversampling parameter.

\item[\textbf{C2.}]
\textbf{Resketched recovery with a global quasi-optimal guarantee.}
Resketched recovery draws fresh Gaussian test matrices at each level.
For a prescribed hierarchy, rank bounds, and nearby interactions,
we prove
\[
    \mathbb E \normfro{\mtx A-\widetilde{\mtx A}}
    \le
    \sqrt{2(L-\ell_0+1)}\,\beta\,\mathrm{OPT},
\]
where $\mathrm{OPT}$ is the best Frobenius-norm approximation error in the
prescribed $\mathcal H^2$ class and $\beta$ is an explicit local recovery
factor.  Under fixed geometric bounds and oversampling proportional to rank,
$\beta$ remains bounded.  On trees of logarithmic depth, resketched recovery uses
$O((k+p)\log(N/k))$ queries of $\mtx A$ and $\mtx A^*$. Here, $L-\ell_0 + 1$ is the number of processed levels and $N$ is the matrix dimension.

\item[\textbf{C3.}]
\textbf{A controlled numerical comparison of sample reuse.}
We evaluate the two algorithms on Laplace, Dirichlet-to-Neumann, and Helmholtz operators with up to \(1.28\) million unknowns. The methods share the same multilevel recovery procedure and differ primarily
in how samples are supplied across levels, so the experiments directly compare
fresh and reused sampling. With sufficient oversampling, carried recovery attains accuracy comparable to resketched recovery while using fewer operator applications.

\item[\textbf{C4.}]
\textbf{A telescoping multilevel approximation on adaptive trees.}
We reorganize the hierarchical approximation into a telescoping form in 
which the same one-level
recovery problem appears at every scale, including on adaptive trees with
leaves at different depths.  With nested orthonormal bases, the resulting
levelwise contributions are mutually orthogonal in the Frobenius inner product.
At each level, only neighboring interactions are retained explicitly, while long-range interactions are represented on coarser levels through the
nested bases.  For fixed box bases, we prove that this
representation is an exact regrouping of the conventional adaptive $\mathcal H^2$ representation.

\begin{figure}[H]
\centering
\resizebox{0.92\linewidth}{!}{
\begin{tikzpicture}[
    every node/.style={align=center,font=\large},
    process/.style={-{Stealth[length=2.2mm]},line width=0.8pt},
    page/.style={
        draw=black,
        fill=white,
        line width=0.9pt,
        inner sep=0.3mm,
    },
]
    \node (geom)
        {\includegraphics[width=34mm]
        {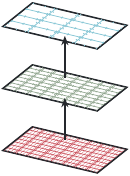}};
    \node[anchor=east]
        at ($(geom.south west)!0.14!(geom.north west)+(-1mm,0)$)
        {$L$};
    \node[anchor=east]
        at ($(geom.south west)!0.50!(geom.north west)+(-1mm,0)$)
        {$L-1$};
    \node[anchor=east]
        at ($(geom.south west)!0.68!(geom.north west)+(-1mm,0)$)
        {$\vdots$};
    \node[anchor=east]
        at ($(geom.south west)!0.86!(geom.north west)+(-1mm,0)$)
        {$\ell_0$};
    \node[fill=white,inner sep=0.55pt,anchor=east]
        at ($(geom.south west)!0.32!(geom.north west)+(-1mm,0)$)
        {Geometric\\ hierarchy};

    \node[page,right=6mm of geom] (physical)
        {\includegraphics[width=42mm]
        {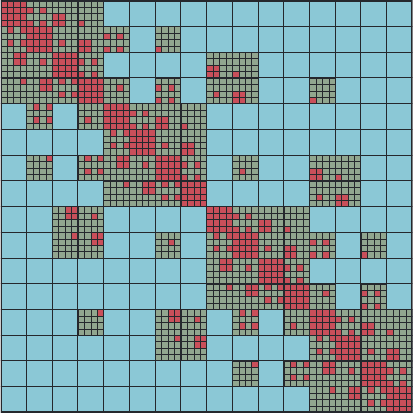}};
    \node[anchor=south,inner sep=0pt]
        at ([yshift=1.5mm]physical.north)
        {Hierarchical block partition};
    \node[font=\LARGE,fill=white,inner sep=0.8pt]
        at (physical.center) {$\mtx A$};

    \node[page,right=10mm of physical,yshift=14mm] (t-pattern)
        {\includegraphics[width=14mm]{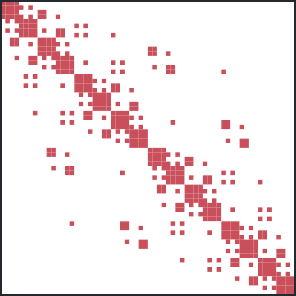}};
    \node at ($(physical.east)!0.5!(t-pattern.west)+(0,8.5mm)$)
        {$\approx$};

    \node[page,right=9mm of t-pattern] (d-pattern)
        {\includegraphics[width=14mm]{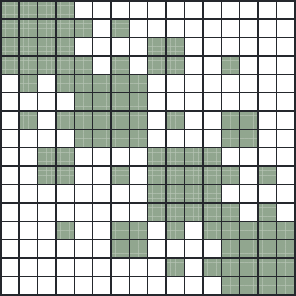}};
    \node at ($(t-pattern.east)!0.5!(d-pattern.west)$)
        {$+$};

    \node[page,right=20mm of d-pattern] (w-pattern)
        {\includegraphics[width=14mm]{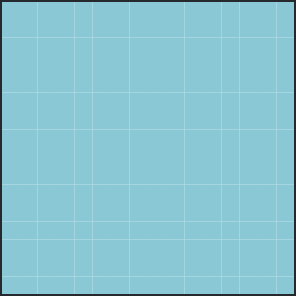}};
    \node at ($(d-pattern.east)!0.2!(w-pattern.west)$)
        {$+$};
    \node at ($(d-pattern.east)!0.5!(w-pattern.west)$)
        {$\cdots$};
    \node at ($(d-pattern.east)!0.8!(w-pattern.west)$)
        {$+$};
    \node[anchor=south,inner sep=0pt]
        at ($(t-pattern.north)!0.5!(w-pattern.north)+(0,1.5mm)$)
        {Multilevel approximation};

\begin{scope}[every node/.style={font=\large,inner sep=0.4pt}]
        \node[anchor=west]
            at ([yshift=-4.5mm]t-pattern.south west)
            {\textbf{Resketched:}};
        \node (resketched-fine)
            at ([yshift=-10.5mm]t-pattern.south)
            {$(\mtx\Omega_L,\mtx\Psi_L)$};
        \node
            at ([yshift=-10.5mm]d-pattern.south)
            {$(\mtx\Omega_{L-1},\mtx\Psi_{L-1})$};
        \node
            at ($(d-pattern.south)!0.5!(w-pattern.south)+(0,-10.5mm)$)
            {$\cdots$};
        \node (resketched-coarse)
            at ([yshift=-10.5mm]w-pattern.south)
            {$(\mtx\Omega_{\ell_0},\mtx\Psi_{\ell_0})$};

        \node[anchor=west]
            at ([yshift=-18mm]t-pattern.south west)
            {\textbf{Carried:}};
        \node (carried-tests)
            at ([yshift=-13.5mm]resketched-fine.center)
            {$(\mtx\Omega,\mtx\Psi)$};
        \draw[process]
            ([xshift=1.5mm]carried-tests.east) --
            ([yshift=-13.5mm]resketched-coarse.east);
    \end{scope}
\end{tikzpicture}
 }
\caption{The geometric hierarchy induces the recovered hierarchical approximation. Resketched recovery uses new Gaussian sketches at successive levels, while carried recovery updates one initial forward–adjoint sketch pair throughout the hierarchy.}
\label{fig:multilevel-telescope}
\end{figure}

\end{itemize}
\subsection{Related work}

Hierarchical low-rank formats differ in both basis structure and admissibility.
An $\mathcal H$-matrix approximates admissible blocks independently, whereas
an $\mathcal H^2$-matrix uses bases shared across interactions and
nested across levels.  Weak admissibility treats off-diagonal interactions as
compressible, while strong admissibility compresses only well-separated pairs
and stores nearby interactions explicitly
\cite{ballani2016hierarchical,borm2010efficient,hackbusch1999sparse,hackbusch2002data}.  In two and three dimensions, weakly admissible
ranks typically grow under refinement, whereas strongly admissible
well-separated ranks can remain bounded for many nonoscillatory problems and
fixed-frequency Helmholtz problems
\cite{malhotra2015pvfmm,ying2004kernel}.  We focus on strong admissibility, with weak admissibility as a special case.

Randomized hierarchical reconstruction using matrix products includes
levelwise peeling for strongly admissible matrices \cite{lin2011fast} and its
graph-colored extension \cite{levitt2024coloring}, methods for HSS/HODLR
representations
\cite{liu2016parallel,martinsson2011fast,martinsson2016compressing}, and
adaptive or GPU-accelerated hierarchical constructions
\cite{boukaram2025adaptive,boukaram2019randomized,gorman2019matrixfree}.
Other operator-recovery methods exploit additional PDE structure, including
ellipticity and localization
\cite{boulle2023elliptic,schafer2024sparse}.

Strongly admissible peeling methods have the same
$O(k\log(N/k))$-type query complexity as resketched recovery under comparable
rank assumptions.  In these methods, interactions recovered at earlier levels
are subtracted before later levels are sampled, so earlier approximation errors
can enter subsequent sketches.  Resketched recovery instead samples each
current projected problem directly, producing an exact sketch of that problem
conditional on the previously recovered bases.

Two lines of work are particularly close to
the present work: randomized recovery for hierarchically semiseparable matrices (HSS) and randomized strong recursive skeletonization (RSRS). 
For weakly admissible HSS matrices, Levitt and Martinsson carry a global forward--adjoint sketch pair through
the hierarchy \cite{levitt2024linear}.  Amsel et al.\ analyze a levelwise-resketched variation of this
method and prove a global quasi-optimal guarantee in the
Frobenius norm \cite{amsel2026quasioptimal}. More broadly, quasi-optimal
approximation guarantees have been established for HODLR recovery and
for recovery with a prescribed sparsity pattern
\cite{amsel2026fixedsparsity,chen2025nearoptimal}.  To our knowledge, the
present work provides the first global quasi-optimal expected Frobenius-error
guarantee for matrix-free recovery of a strongly admissible
$\mathcal H^2$ approximation.

RSRS \cite{yesypenko2026randomized} is a black-box
extension of recursive skeletonization for 
strongly admissible $\mathcal H^2$ matrices \cite{minden2017recursive}.
Like the carried algorithm, it uses one dense Gaussian
forward-adjoint sketch pair throughout the hierarchy. From these sketches, 
RSRS constructs an approximate 
invertible factorization, whereas
the present work  constructs an explicit $\mathcal H^2$
approximation in orthonormal box coordinates.

Related telescoping decompositions appear in dual-space kernel splitting
\cite{jiang2025dmk}, are implicit in the simplified kernel-independent FMM
\cite{yesypenko2025skelfmm}, and are made explicit for weakly admissible HSS
matrices in \cite{casulli2024functions}.  Our formulation combines near-neighbor coefficient storage at each level with Frobenius orthogonality across levels.

\subsection{Organization}

\Cref{sec:one-level-prd} develops the one-level decomposition and recovery
procedure.  \Cref{sec:multilevel-prd} constructs the multilevel
representation, and \Cref{sec:algorithms} describes the two recovery algorithms,
their query complexity, and the resketched error guarantee.
\Cref{sec:experiments} presents the numerical experiments.  Appendix~\ref{app:proofs_and_supporting_estimates} provides supporting estimates and recovery proofs. 
\suppref{app:pseudocode} provides the $\mathcal H^2$ equivalence proof, matrix--vector application details, and pseudocode.

\paragraph{Notation}
Bold capitals denote matrices, and sans-serif capitals denote local box or
coordinate index sets.  For index sets $\msf I,\msf J$, $\mtx A_{\msf I,\msf J}$
denotes the submatrix of $\mtx A$ with rows in $\msf I$
and columns in $\msf J$. We write $\norm{\cdot}$ for the Euclidean norm of vectors,
and $\normtwo{\cdot}$ and $\normfro{\cdot}$ for the spectral
and Frobenius norms of matrices, respectively. We use 
$\range(\mtx M)$, $\operatorname{null}(\mtx M)$, and $\mtx M^\dagger$
for the column space, nullspace, and Moore--Penrose pseudoinverse of
$\mtx M$.
{}
All Gaussian test matrices introduced below have independent entries.
Over $\R$, the entries are standard real Gaussians; over $\C$, each entry
is $(g_1+\mathrm{i}g_2)/\sqrt{2}$, where $g_1$ and $g_2$ are independent
standard real Gaussians.
 \section{One-Level Decomposition and Recovery}
\label{sec:one-level-prd}

We begin with a fixed partition of the degrees of freedom into boxes.
The aim is to compress interactions between well-separated boxes
while retaining nearby interactions explicitly. We first describe
this approximation for given local bases, then show how to recover
the bases and the required coefficients using only products with
the matrix and its adjoint.

\subsection{One-level coordinate split}
\label{sec:one-level-basis-split}

To motivate the construction, consider a discretization of the two-dimensional
logarithmic kernel
\[
    \mtx A_{ij}=-{(2\pi)}^{-1}\ \log\norm{x_i-x_j},\qquad i\ne j,
\]
with the diagonal supplied by a local quadrature rule that treats the
singularity at $i=j$.  This is the free-space Green's function for the Laplace
equation in two dimensions.  Partitioning the physical points into boxes
induces a block partition of the matrix.  For sufficiently well-separated
boxes, the kernel is smooth and the corresponding matrix subblocks are
numerically low rank. 

To make this approximation precise, consider a general matrix
$\mtx A\in\field^{N\times N}$, with $\field\in\{\R,\C\}$, whose row and
column indices are partitioned into boxes.  For each box $\tau$, let
$\near(\tau)$ contain $\tau$ and its nearby boxes.  For an axis-aligned
spatial partition with strong admissibility, these are the boxes that touch
$\tau$ along a face, edge, or corner, as shown in
\cref{fig:one-level-recursion}.
Fix a box $\tau$.  Relative to $\tau$, write
\[
    \msf B := \tau,
    \qquad
    \msf N := \near(\tau)\setminus\{\tau\},
    \qquad
    \msf F := \text{all remaining boxes}.
\]
We refer to $\msf B$, $\msf N$, and $\msf F$ as the box, its near neighbors,
and its far field, respectively, and use their
corresponding index sets in matrix subscripts.

For each box $\tau$, fix a rank $1\le k_\tau\le m_\tau:=|\tau|$,
and choose orthonormal target and source bases
\(\mtx U_{\msf S_\tau},\mtx V_{\msf S_\tau} \in \field^{m_\tau\times k_\tau}
\)
such that
\begin{equation}
\label{eq:far-field-basis-capture}
    \mtx U_{\msf S_\tau} \mtx U_{\msf S_\tau}^* \mtx A_{\msf B,\msf F}
    \approx
    \mtx A_{\msf B,\msf F},
    \qquad
    \mtx A_{\msf F,\msf B} \mtx V_{\msf S_\tau} \mtx V_{\msf S_\tau}^*
    \approx
    \mtx A_{\msf F,\msf B}.
\end{equation}
Thus the \(\msf S_\tau\) coordinates capture the far-field block row and column of \(\tau\).

To separate the coordinates that capture far-field interactions
from the remaining coordinates, complete the target and source
bases with orthonormal complements
$\mtx U_{\msf R_\tau}$ and $\mtx V_{\msf R_\tau}$:
\[
    \mtx U_\tau
    =
    \left[
        \mtx U_{\msf S_\tau}\ \ \mtx U_{\msf R_\tau}
    \right],
    \qquad
    \mtx V_\tau
    =
    \left[
        \mtx V_{\msf S_\tau}\ \ \mtx V_{\msf R_\tau}
    \right].
\]
Here $\msf S_\tau$ and $\msf R_\tau$ index the coordinates
in the chosen bases and their orthogonal complements, respectively.
For a pair of boxes $\tau,\sigma$, the corresponding
interaction in these local coordinates is
\begin{equation}
\begin{aligned}
\label{eq:one-level-coordinate-block}
    \mtx C_{\tau,\sigma}
    :=
    \mtx U_\tau^* \mtx A_{\tau,\sigma} \mtx V_\sigma
    &=
    \begin{bmatrix}
        \mtx C_{\msf S_\tau,\msf S_\sigma}
        &
        \mtx C_{\msf S_\tau,\msf R_\sigma}
        \\
        \mtx C_{\msf R_\tau,\msf S_\sigma}
        &
        \mtx C_{\msf R_\tau,\msf R_\sigma}
    \end{bmatrix}\\
    &\approx
        \begin{bmatrix}
        \mtx C_{\msf S_\tau,\msf S_\sigma}
        &
        \mathmakebox[\widthof{$\mtx C_{\msf S_\tau,\msf R_\sigma}$}]
            {\mtx 0\vphantom{\mtx C_{\msf S_\tau,\msf R_\sigma}}}
        \\
        \mathmakebox[\widthof{$\mtx C_{\msf R_\tau,\msf S_\sigma}$}]
            {\mtx 0\vphantom{\mtx C_{\msf R_\tau,\msf S_\sigma}}}
        &
        \mathmakebox[\widthof{$\mtx C_{\msf R_\tau,\msf R_\sigma}$}]
            {\mtx 0\vphantom{\mtx C_{\msf R_\tau,\msf R_\sigma}}}
    \end{bmatrix},\ \text{if $\tau$ and $\sigma$ are well separated}.
    \end{aligned}
\end{equation}
If $\tau$ and $\sigma$ are neighbors, all four blocks are
retained.  If they are well separated, only
$\mtx C_{\msf S_\tau,\msf S_\sigma}$ is retained.  Accurate far-field bases make the discarded \(\msf{SR}\), \(\msf{RS}\), and \(\msf{RR}\) blocks small.
Collecting the $\msf{SS}$ blocks over all current box pairs gives a smaller
matrix which is resolved at the next coarser level.  The retained \(\msf{R}\)-containing blocks record the nearby interactions resolved at the current level; see
\Cref{fig:one-level-recursion}.

\begin{figure}[t]
    \centering
    \resizebox{0.88\linewidth}{!}{\begin{tikzpicture}[
    every node/.style={align=center},
    process/.style={-{Stealth[length=2.1mm]},line width=0.8pt},
    matrix page/.style={
        draw=black,
        fill=white,
        line width=1.0pt,
        inner sep=0.2mm,
    },
]
    \definecolor{levelThreeRed}{HTML}{C94F5A}
    \definecolor{levelTwoGreen}{HTML}{91A68F}
    \definecolor{coarseCyan}{HTML}{8BC8D6}
    \definecolor{matrixGrid}{HTML}{25282D}

\node[matrix page] (physical)
        {\includegraphics[width=31mm]
        {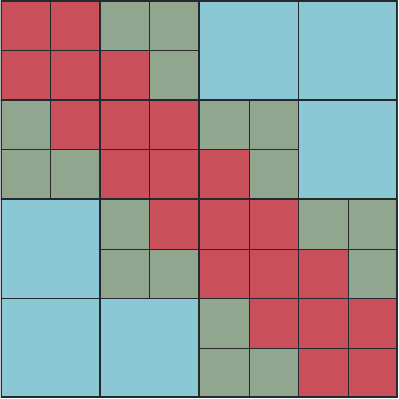}};
    \foreach \i in {1,...,15} {
        \pgfmathsetmacro{\t}{\i/16}
        \draw[white,opacity=0.52,line width=0.18pt]
            ($(physical.south west)!\t!(physical.south east)$) --
            ($(physical.north west)!\t!(physical.north east)$);
        \draw[white,opacity=0.52,line width=0.18pt]
            ($(physical.south west)!\t!(physical.north west)$) --
            ($(physical.south east)!\t!(physical.north east)$);
    }
    \node[font=\normalsize,fill=white,inner sep=0.8pt]
        at (physical.center) {$\mtx A$};
\node[matrix page,right=44mm of physical] (transformed)
        {\begin{tikzpicture}[x=1.94mm,y=1.94mm]
            \fill[white] (0,0) rectangle (16,16);
            \foreach \i in {0,...,7} {
                \foreach \j in {0,...,7} {
                    \pgfmathtruncatemacro{\childnear}{abs(\i-\j)<=1}
                    \pgfmathtruncatemacro{\parentfar}{
                        abs(floor(\i/2)-floor(\j/2))>1
                    }
                    \ifnum\parentfar=1
                        \def\corecolor{coarseCyan}
                    \else
                        \def\corecolor{levelTwoGreen}
                    \fi
                    \fill[\corecolor]
                        (\j,{15-\i}) rectangle ({\j+1},{16-\i});
                    \ifnum\childnear=1
                        \fill[levelThreeRed]
                            ({8+\j},{15-\i})
                            rectangle ({9+\j},{16-\i});
                        \fill[levelThreeRed]
                            (\j,{7-\i}) rectangle ({\j+1},{8-\i});
                        \fill[levelThreeRed]
                            ({8+\j},{7-\i})
                            rectangle ({9+\j},{8-\i});
                    \fi
                }
            }
            \foreach \g in {0,...,16} {
                \draw[matrixGrid,line width=0.18pt]
                    (\g,0) -- (\g,16);
                \draw[matrixGrid,line width=0.18pt]
                    (0,\g) -- (16,\g);
            }
            \draw[matrixGrid,line width=0.9pt] (8,0) -- (8,16);
            \draw[matrixGrid,line width=0.9pt] (0,8) -- (16,8);
            \draw[matrixGrid,line width=1.0pt]
                (0,0) rectangle (16,16);
        \end{tikzpicture}};

    \node[font=\scriptsize,anchor=south]
        at ([xshift=-7.75mm,yshift=0.8mm]transformed.north)
        {$\msf S$};
    \node[font=\scriptsize,anchor=south]
        at ([xshift=7.75mm,yshift=0.8mm]transformed.north)
        {$\msf R$};
    \node[font=\scriptsize,anchor=east]
        at ([xshift=-0.8mm,yshift=7.75mm]transformed.west)
        {$\msf S$};
    \node[font=\scriptsize,anchor=east]
        at ([xshift=-0.8mm,yshift=-7.75mm]transformed.west)
        {$\msf R$};

    \node[font=\scriptsize,fill=white,fill opacity=0.90,
        text opacity=1,inner sep=0.5pt]
        at ([xshift=-7.75mm,yshift=7.75mm]transformed.center)
        {$\mtx C_{\msf S,\msf S}$};
    \node[font=\scriptsize,fill=white,fill opacity=0.90,
        text opacity=1,inner sep=0.5pt]
        at ([xshift=7.75mm,yshift=7.75mm]transformed.center)
        {$\mtx C_{\msf S,\msf R}$};
    \node[font=\scriptsize,fill=white,fill opacity=0.90,
        text opacity=1,inner sep=0.5pt]
        at ([xshift=-7.75mm,yshift=-7.75mm]transformed.center)
        {$\mtx C_{\msf R,\msf S}$};
    \node[font=\scriptsize,fill=white,fill opacity=0.90,
        text opacity=1,inner sep=0.5pt]
        at ([xshift=7.75mm,yshift=-7.75mm]transformed.center)
        {$\mtx C_{\msf R,\msf R}$};
    \node[font=\scriptsize,anchor=north]
        at ($(transformed.south)+(0,-1.2mm)$)
        {$\mtx C=\mtx U^*\mtx A\mtx V$};
    \coordinate (change) at ($(physical.east)!0.5!(transformed.west)$);
    \node[font=\scriptsize]
        at ($(change)+(0,11mm)$)
        {$\mtx U=\operatorname{diag}_\tau\mtx U_\tau$\\[1pt]
         $\mtx V=\operatorname{diag}_\tau\mtx V_\tau$};
    \draw[process] (physical.east) --
        node[above=1mm,font=\scriptsize] {$\mtx U^*(\,\cdot\,)\mtx V$}
        (transformed.west);

\coordinate (core-key) at ($(change)+(-17mm,-5.5mm)$);
    \fill[levelTwoGreen] (core-key) rectangle ++(1.3mm,2.6mm);
    \fill[coarseCyan] ($(core-key)+(1.3mm,0)$)
        rectangle ++(1.3mm,2.6mm);
    \draw[matrixGrid,line width=0.3pt]
        (core-key) rectangle ++(2.6mm,2.6mm);
    \node[font=\scriptsize,anchor=west,align=left,inner sep=0pt]
        at ($(core-key)+(4mm,1.3mm)$)
        {$\msf{SS}$ blocks\\pass to next level};

    \coordinate (near-key) at ($(change)+(-17mm,-13mm)$);
    \filldraw[fill=levelThreeRed,draw=matrixGrid,line width=0.3pt]
        (near-key) rectangle ++(2.6mm,2.6mm);
    \node[font=\scriptsize,anchor=west,align=left,inner sep=0pt]
        at ($(near-key)+(4mm,1.3mm)$)
        {Near field: retain\\$\msf{SR},\msf{RS},\msf{RR}$};

    \coordinate (far-key) at ($(change)+(-17mm,-20.5mm)$);
    \filldraw[fill=white,draw=matrixGrid,line width=0.3pt]
        (far-key) rectangle ++(2.6mm,2.6mm);
    \node[font=\scriptsize,anchor=west,align=left,inner sep=0pt]
        at ($(far-key)+(4mm,1.3mm)$)
        {Far field: discard\\$\msf{SR},\msf{RS},\msf{RR}$};

\node[below=6mm of physical] (physical-geometry)
        {\includegraphics[width=34mm]
        {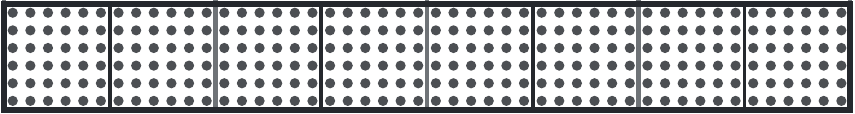}};
    \node[font=\scriptsize,anchor=south]
        at ($(physical-geometry.north west)!0.125!(physical-geometry.north east)$)
        {$\msf F$};
    \node[font=\scriptsize,anchor=south]
        at ($(physical-geometry.north west)!0.3125!(physical-geometry.north east)$)
        {$\msf N$};
    \node[font=\scriptsize,anchor=south]
        at ($(physical-geometry.north west)!0.4375!(physical-geometry.north east)$)
        {$\msf B$};
    \node[font=\scriptsize,anchor=south]
        at ($(physical-geometry.north west)!0.5625!(physical-geometry.north east)$)
        {$\msf N$};
    \node[font=\scriptsize,anchor=south]
        at ($(physical-geometry.north west)!0.8125!(physical-geometry.north east)$)
        {$\msf F$};
    \node[below=6mm of transformed] (skeleton-geometry)
        {\includegraphics[width=34mm]
        {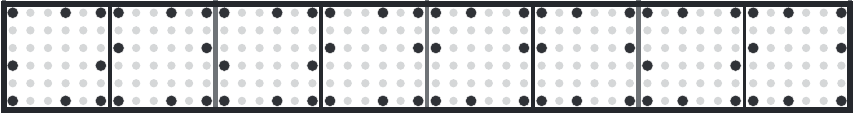}};
    \node[font=\scriptsize,anchor=north]
        at (physical-geometry.south) {Physical points};
    \node[font=\scriptsize,anchor=north]
        at (skeleton-geometry.south) {$\msf S$ coordinates (schematic)};
\end{tikzpicture}
 }
    \caption{The one-level coordinate split, regrouped into $\msf S/\msf R$
    coordinates.  Colors within the $\msf{SS}$ core indicate the next
    coarser partition.  The lower-right panel schematically represents
    coordinates in the retained orthonormal bases.}
    \label{fig:one-level-recursion}
\end{figure}

\subsection{One-Level Recovery}
\label{sec:one-level-randomized-recovery}

Recovery has two tasks: first find the box bases, then recover
the neighboring $\msf R$-containing coefficients, using only
matrix--vector products. Sketches formed with dense Gaussian test matrices mix nearby
and far-field interactions.

We describe linear transformations on the test matrix that separate
these interactions. Block nullification cancels the nearby contributions
to identify the far-field bases. Block extraction then recovers
the required neighboring coefficients in the resulting coordinates.
We first describe these operations separately, then show how both
can use the same forward--adjoint sketch pair.

\subsubsection{Block Nullification and Basis Recovery}
\label{sec:one-level-recover-bases}

Fix a box $\tau$ and use the decomposition
$\msf B,\msf N,\msf F$ above.  Randomized range finding uses a sketch of
$\mtx A_{\msf B,\msf F}$ to recover the target basis. 
However, a dense sketch also has contributions 
from the box itself and its near neighbors. 
Block nullification transforms the test matrix so that its
$\msf B\cup\msf N$ rows vanish.

Fix $p\geq2$.  Draw independent Gaussian test matrices
$\mtx\Omega,\mtx\Psi\in\field^{N\times s}$ and form
$\mtx Y=\mtx A\mtx\Omega$ and $\mtx Z=\mtx A^*\mtx\Psi$.  For the current box,
\[
    \mtx Y_{\msf B}
    =\mtx A_{\msf B,\msf B}\mtx\Omega_{\msf B}
    +\mtx A_{\msf B,\msf N}\mtx\Omega_{\msf N}
    +\mtx A_{\msf B,\msf F}\mtx\Omega_{\msf F}.
\]
To remove the $\msf B\cup\msf N$ contribution, define
\begin{equation}
\label{eq:one-level-nullification-test}
    \mtx G_\Omega
    :=\begin{bmatrix}\mtx\Omega_{\msf B}\\ \mtx\Omega_{\msf N}\end{bmatrix}
    \in\field^{n_\tau\times s},
    \qquad n_\tau:={\textstyle \sum}_{\sigma\in\near(\tau)}\ m_\sigma.
\end{equation}
If $s\geq n_\tau+k_\tau+p$, then the nullspace of $\mtx G_\Omega$ has
dimension at least $k_\tau+p$.  Choose \(\mtx K_\Omega\in\mathbb F^{s\times(k_\tau+p)}\) with orthonormal columns in \(\operatorname{null}(\mtx G_\Omega)\), using only $\mtx G_{\Omega}$.
Multiplying $\mtx Y_{\msf B}$ by
$\mtx K_\Omega$ gives
\begin{equation}
\label{eq:one-level-nullified-sketch}
    \mtx Y_{\msf B}\mtx K_\Omega
    =
    \mtx A_{\msf B,\msf F}\mtx \Omega_{\msf F}
    \mtx K_\Omega.
\end{equation}
The matrix $\mtx\Omega_{\msf F}$ is independent of $\mtx G_\Omega$, and
$\mtx K_\Omega$ has orthonormal columns.  Therefore
$\mtx\Omega_{\msf F}\mtx K_\Omega$ is standard Gaussian with
$k_\tau+p$ columns. Retain the top $k_\tau$ left singular vectors
of $\mtx Y_{\msf B}\mtx K_\Omega$ as $\mtx U_{\msf S_\tau}$. Applying the same construction to
$\mtx Z=\mtx A^*\mtx\Psi$, with the nearby rows of $\mtx\Psi$ nullified,
recovers the source basis $\mtx V_{\msf S_{\tau}}$.
The resulting basis
error is bounded by \cref{lem:gaussian-range-estimate}.
Applying this construction to every box uses only the two
dense sketches $\mtx Y$ and $\mtx Z$.  Thus
$s\geq\max_{\tau}(n_\tau+k_\tau+p)$
suffices for all boxes.

\begin{figure}[t]
    \centering
    \begin{tikzpicture}[
    every node/.style={align=center},
    process/.style={-{Stealth[length=2.2mm]},line width=0.8pt},
    page/.style={
        draw=black,
        fill=white,
        line width=0.9pt,
        inner sep=0.3mm,
    },
]
    \node[page] (ystackback)
        {\includegraphics[width=4.8mm,height=28mm]
        {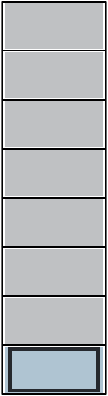}};
    \node[page,xshift=-2.1mm,yshift=-1.4mm]
        at (ystackback.center) (ystackmid)
        {\includegraphics[width=4.8mm,height=28mm]
        {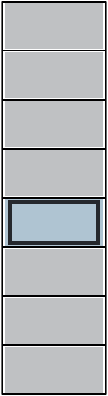}};
    \node[page,xshift=-4.2mm,yshift=-2.8mm]
        at (ystackback.center) (ystackfront)
        {\includegraphics[width=4.8mm,height=28mm]
        {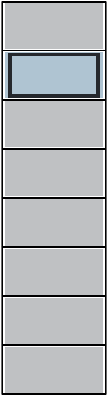}};

    \node[page,right=10.5mm of ystackfront] (A)
        {\includegraphics[width=28mm]
        {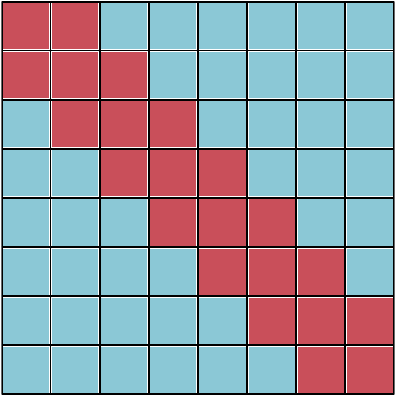}};

    \node[page,right=5mm of A,xshift=3.6mm,yshift=2.8mm] (ostackback)
        {\includegraphics[width=4.8mm,height=28mm]
        {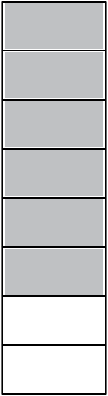}};
    \node[page,xshift=-2.1mm,yshift=-1.4mm]
        at (ostackback.center) (ostackmid)
        {\includegraphics[width=4.8mm,height=28mm]
        {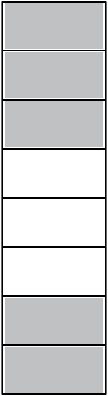}};
    \node[page,xshift=-4.2mm,yshift=-2.8mm]
        at (ostackback.center) (ostackfront)
        {\includegraphics[width=4.8mm,height=28mm]
        {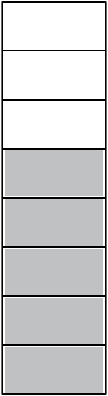}};

    \node at ($(ystackback.east)!0.5!(A.west)$) {$=$};
    \node[below=1mm of ystackfront,font=\footnotesize]
        {$\text{shaded: }\mtx Y_j\mtx K_j$\\[-0.5mm]
        \(\scriptstyle j=2,5,8\)};
    \node[below=1mm of ostackfront,font=\footnotesize]
        {$\mtx\Omega\mtx K_j$\\[-0.5mm]
        \(\scriptstyle j=2,5,8\)};

    \node[page,left=28mm of ystackfront] (sharedA)
        {\includegraphics[width=11.3mm]
        {figures/recovery_one_level_strip_physical_operator.pdf}};
    \node[page,left=5.5mm of sharedA] (sharedY)
        {\includegraphics[width=3.5mm]
        {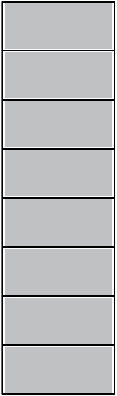}};
    \node[page,right=1.8mm of sharedA] (sharedOmega)
        {\includegraphics[width=3.5mm]
        {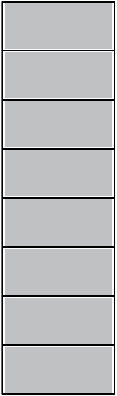}};
    \node at ($(sharedY.east)!0.5!(sharedA.west)$) {$=$};
    \node[below=1mm of sharedY,font=\scriptsize] {$\mtx Y$};
    \node[below=1mm of sharedA,font=\scriptsize] {$\mtx A$};
    \node[below=1mm of sharedOmega,font=\scriptsize] {$\mtx\Omega$};
    \draw[process] ($(sharedOmega.east)+(2mm,0)$) --
        node[above=1mm,font=\scriptsize,fill=white,
        inner xsep=2pt,inner ysep=1pt]
        {block\\nullification}
        ($(ystackfront.west)+(-2mm,0)$);

    \node[below=4mm of A,inner sep=0pt] (geom)
        {\includegraphics[width=28mm]
        {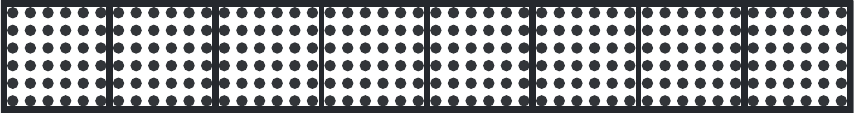}};
    \foreach \i in {1,...,8} {
        \pgfmathsetmacro{\taufraction}{(\i-0.5)/8}
        \node[font=\scriptsize\bfseries,anchor=south]
            at ($(geom.north west)!\taufraction!(geom.north east)
                +(0,0.35mm)$)
            {$\i$};
    }
\end{tikzpicture}
     \caption{Block nullification.  The same dense sketch
    $\mtx Y=\mtx A\mtx\Omega$ is used for each box $\tau_j$.
    The matrix $\mtx K_j$ annihilates the rows of $\mtx\Omega$ associated
    with $\tau_j$ and its near neighbors, so the shaded block
    $\mtx Y_j\mtx K_j$ samples only its far field.}
    \label{fig:one-level-block-nullification}
\end{figure}

\FloatBarrier
\subsubsection{Block Extraction}
\label{sec:one-level-recover-residual}

Once the target and source bases have been recovered, the remaining task is
to recover the $\msf R$-containing coefficients between neighboring boxes.
Let $\mtx U=\diag_\tau\mtx U_\tau$,
$\mtx V=\diag_\tau\mtx V_\tau$, and
$\mtx C=\mtx U^*\mtx A\mtx V$.
Fix a box $\tau$ and abbreviate
$\msf S_\tau,\msf R_\tau$ by $\msf S,\msf R$.

The recovered target basis makes $\mtx C_{\msf R,\msf F}$
small. Block extraction isolates the neighboring coefficients
$\mtx C_{\msf R,\msf B}$ and $\mtx C_{\msf R,\msf N}$
from the sketches, up to an error from the remaining far-field
contribution.
Assume that Gaussian test matrices and sketches
of $\mtx C$ are available in the transformed coordinates:
\[
    \widehat{\mtx Y}:=\mtx C\widehat{\mtx\Omega},
    \qquad
    \widehat{\mtx Z}:=\mtx C^*\widehat{\mtx\Psi}.
\]
The relevant $\msf R$ block row is
\[
    \widehat{\mtx Y}_{\msf R}
    =\mtx C_{\msf R,\msf B}\widehat{\mtx\Omega}_{\msf B}
    +\mtx C_{\msf R,\msf N}\widehat{\mtx\Omega}_{\msf N}
    +\mtx C_{\msf R,\msf F}\widehat{\mtx\Omega}_{\msf F}.
\]
Define
$\widehat{\mtx G}_\Omega
:=[\widehat{\mtx\Omega}_{\msf B}^*\ \widehat{\mtx\Omega}_{\msf N}^*]^*$.
With the stated number of columns,
$\widehat{\mtx G}_\Omega$ has full row rank with probability one.
Thus the $\msf B\cup\msf N$ rows of the effective test matrix
$\widehat{\mtx\Omega}\pinv{\widehat{\mtx G}_\Omega}$ form the identity,
while its $\msf F$ rows are nonzero.  Multiplying the equation for
$\widehat{\mtx Y}_{\msf R}$ on the right by
$\pinv{\widehat{\mtx G}_\Omega}$ gives
\begin{equation}
\label{eq:one-level-block-extraction}
    [\widetilde{\mtx C}_{\msf R,\msf B}\quad
     \widetilde{\mtx C}_{\msf R,\msf N}]
    :={}
    \widehat{\mtx Y}_{\msf R}
    \pinv{\widehat{\mtx G}_\Omega}
    =
    [\mtx C_{\msf R,\msf B}\quad
     \mtx C_{\msf R,\msf N}]
    +
    \mtx C_{\msf R,\msf F}
        \widehat{\mtx\Omega}_{\msf F}
    \pinv{\widehat{\mtx G}_\Omega}.
\end{equation}
The second term is the extraction error: contamination of the recovered
near-field coefficients by the small far-field residual.
The analogous adjoint calculation with $\widehat{\mtx Z}$ and
$\widehat{\mtx\Psi}$ defines $\widetilde{\mtx C}_{\msf B,\msf R}$ and
$\widetilde{\mtx C}_{\msf N,\msf R}$.
Both the forward and adjoint calculations estimate the
$\msf{RR}$ blocks. We use the forward estimates when
assembling the approximation.

\begin{figure}[t]
    \centering
    \resizebox{0.98\linewidth}{!}{\def\blockextractC{\begin{tikzpicture}[x=1.48mm,y=1.48mm]
        \fill[white] (0,0) rectangle (16,16);
        \foreach \i in {0,...,7} {
            \foreach \j in {0,...,7} {
                \pgfmathtruncatemacro{\near}{abs(\i-\j)<=1}
                \pgfmathtruncatemacro{\parentfar}{abs(floor(\i/2)-floor(\j/2))>1}
                \ifnum\parentfar=1
                    \def\corecolor{extractCyan}
                \else
                    \def\corecolor{extractGreen}
                \fi
                \fill[\corecolor]
                    ({2*\j},{15-2*\i}) rectangle ({2*\j+1},{16-2*\i});
                \ifnum\near=1
                    \fill[extractRed]
                        ({2*\j+1},{15-2*\i}) rectangle ({2*\j+2},{16-2*\i});
                    \fill[extractRed]
                        ({2*\j},{14-2*\i}) rectangle ({2*\j+1},{15-2*\i});
                    \fill[extractRed]
                        ({2*\j+1},{14-2*\i}) rectangle ({2*\j+2},{15-2*\i});
                \fi
            }
        }
        \foreach \g in {0,...,16} {
            \draw[extractGrid!48,line width=0.10pt] (\g,0) -- (\g,16);
            \draw[extractGrid!48,line width=0.10pt] (0,\g) -- (16,\g);
        }
        \foreach \g in {2,4,...,14} {
            \draw[extractGrid,line width=0.55pt] (\g,0) -- (\g,16);
            \draw[extractGrid,line width=0.55pt] (0,\g) -- (16,\g);
        }
        \draw[extractGrid,line width=0.85pt] (0,0) rectangle (16,16);
    \end{tikzpicture}}

\begin{tikzpicture}[
    every node/.style={align=center},
    process/.style={-{Stealth[length=2.2mm]},line width=0.8pt},
    page/.style={draw=black,fill=white,line width=0.9pt,inner sep=0.25mm},
]
    \definecolor{extractRed}{HTML}{C94F5A}
    \definecolor{extractGreen}{HTML}{91A68F}
    \definecolor{extractCyan}{HTML}{8BC8D6}
    \definecolor{extractGray}{HTML}{D9D9D9}
    \definecolor{extractGrid}{HTML}{25282D}

\node[page] (Yhat)
        {\includegraphics[width=4.1mm,height=24mm]
        {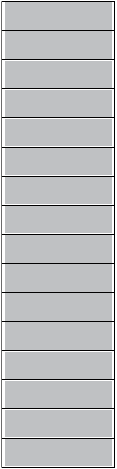}};
    \node[page,right=8mm of Yhat] (Cleft) {\blockextractC};
    \node[page,right=4mm of Cleft] (Omegahat)
        {\includegraphics[width=4.1mm,height=24mm]
        {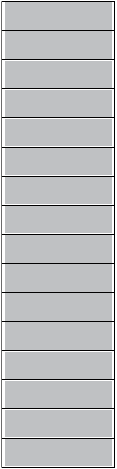}};
    \node at ($(Yhat.east)!0.5!(Cleft.west)$) {$=$};
    \node[below=0.8mm of Yhat,font=\scriptsize] {$\widehat{\mtx Y}$};
    \node[font=\small,fill=white,fill opacity=0.82,text opacity=1,
        inner sep=0.55mm] at (Cleft.center) {$\mtx C$};
    \node[below=0.8mm of Omegahat,font=\scriptsize] {$\widehat{\mtx\Omega}$};

    \draw[process] ($(Omegahat.east)+(1.5mm,0)$) --
        node[above=1.2mm,xshift=1mm,font=\scriptsize,fill=white,inner sep=1pt]
        {block extraction}
        ++(16mm,0) coordinate (extractend);

\node[page,right=4mm of extractend] (Yprime)
        {\begin{tikzpicture}[x=1.45mm,y=1.48mm]
            \fill[extractGray] (0,0) rectangle (6,16);
            \fill[extractRed] (0,6) rectangle (6,7);
            \foreach \g in {0,...,6}
                \draw[extractGrid,line width=0.14pt] (\g,0) -- (\g,16);
            \foreach \g in {0,...,16}
                \draw[extractGrid!60,line width=0.10pt] (0,\g) -- (6,\g);
            \foreach \g in {2,4,...,14}
                \draw[extractGrid,line width=0.55pt] (0,\g) -- (6,\g);
            \draw[extractGrid,line width=0.85pt] (0,0) rectangle (6,16);
        \end{tikzpicture}};
    \node[below=0.8mm of Yprime,font=\scriptsize]
        {$\widehat{\mtx Y}\pinv{\widehat{\mtx G}_{\Omega}}$};

    \node[right=6mm of Yprime,font=\large] (equals) {$=$};
    \node[page,right=6mm of equals] (Cright) {\blockextractC};
    \node[font=\small,fill=white,fill opacity=0.82,text opacity=1,
        inner sep=0.55mm] at (Cright.center) {$\mtx C$};

\node[page,right=5mm of Cright] (effective)
        {\begin{tikzpicture}[x=1.45mm,y=1.48mm]
            \fill[extractGray] (0,0) rectangle (6,4);
            \fill[white] (0,4) rectangle (6,10);
            \fill[extractGray] (0,10) rectangle (6,16);
            \foreach \xpos/\y in {1/9,3/7,5/5}
                \node[font=\small,fill=white,inner sep=0.15pt,
                    xshift=-0.85mm,yshift=0mm]
                    at (\xpos,\y) {$\mathbf I$};
            \foreach \y in {1,3,11,13,15}
                \node[font=\small,inner sep=0pt,
                    xshift=-0.90mm,yshift=0.15mm]
                    at (3,\y) {$\ast$};
            \foreach \g in {0,...,16}
                \draw[extractGrid!60,line width=0.10pt] (0,\g) -- (6,\g);
            \foreach \g in {2,4,...,14}
                \draw[extractGrid,line width=0.55pt] (0,\g) -- (6,\g);
            \draw[extractGrid,line width=0.9pt] (0,0) rectangle (6,16);
        \end{tikzpicture}};
    \node[below=0.8mm of effective,font=\scriptsize]
        {$\widehat{\mtx\Omega}\pinv{\widehat{\mtx G}_{\Omega}}$};

    \node[below=1.5mm of Cright,inner sep=0pt] (geom)
        {\includegraphics[width=25mm]
        {figures/recovery_one_level_strip_geometry.pdf}};
\end{tikzpicture}
 }
    \caption{Block extraction for one box, with coordinates displayed boxwise.
    From left to right, the three identity blocks of the effective test select
    the left neighbor, the box itself, and the right neighbor, while the rows
    marked $\ast$ produce the additive far-field term.  The selected
    $\msf R$ row of the product gives
    \eqref{eq:one-level-block-extraction}. }
    \label{fig:one-level-block-extraction}
\end{figure}

\subsubsection{Sketch Reuse and the One-Level Guarantee}
\label{sec:one-level-recovery-error}

Block extraction requires no additional operator applications.
After recovering the box bases by nullification, we obtain the
required sketches by rotating the original test matrices and
sketches into the recovered coordinates:
\begin{equation}
\label{eq:one-level-sketch-reuse}
\begin{aligned}
    \widehat{\mtx Y}&:=\mtx U^*\mtx Y
        =\mtx C\widehat{\mtx\Omega},
    &\widehat{\mtx\Omega}&:=\mtx V^*\mtx\Omega,\\
    \widehat{\mtx Z}&:=\mtx V^*\mtx Z
        =\mtx C^*\widehat{\mtx\Psi},
    &\widehat{\mtx\Psi}&:=\mtx U^*\mtx\Psi.
\end{aligned}
\end{equation}
The rotated test matrices remain Gaussian.  The source basis $\mtx V$ is
recovered from the adjoint sketch $\mtx Z=\mtx A^*\mtx\Psi$ and is therefore
independent of $\mtx\Omega$; similarly, $\mtx U$ is independent of
$\mtx\Psi$.

For a fixed box, the recovered target basis and the subsequent
row extraction both depend on $\mtx\Omega$. To control the
extraction error, we use the fact that nullification and extraction
involve orthogonal components of the test matrix.
Nullification uses
$k_\tau+p$ directions in $\operatorname{null}(\mtx G_\Omega)$ to sample the
far field, whereas extraction uses the orthogonal component in
$\range(\mtx G_\Omega^*)$.  Conditional on \(\mtx G_\Omega\), the corresponding projections of the independent far-field Gaussian matrix onto these orthogonal subspaces are independent.  The analogous statement holds for the adjoint sketch.

We compare the recovered approximation with the best one-level
approximation having the same box partition, rank bounds, and
neighbor relation. Let $\mathcal M$ denote the
class of matrices having the one-level structure described in
\Cref{sec:one-level-basis-split}.  
For each box $\tau$, allow target and source spaces of dimension at most $k_\tau$. For completed box bases $\overline{\mtx U},\overline{\mtx V}$,
assembled with columns ordered as $(\msf S,\msf R)$, write
\[
\overline{\mtx A}
=
\overline{\mtx U}
\begin{bmatrix}
    \overline{\mtx C}_{\msf S,\msf S}
    & \overline{\mtx C}_{\msf S,\msf R}\\
    \overline{\mtx C}_{\msf R,\msf S}
    & \overline{\mtx C}_{\msf R,\msf R}
\end{bmatrix}
\overline{\mtx V}^{*},
\]
where the coefficients containing an $\msf R$ index satisfy
\[
\begin{bmatrix}
    \mtx0 & \overline{\mtx C}_{\msf S,\msf R}\\
    \overline{\mtx C}_{\msf R,\msf S}
    & \overline{\mtx C}_{\msf R,\msf R}
\end{bmatrix}
=
\left[
\begin{array}{cc}
    \mtx0 & \overline{\mtx C}_{\msf S,\msf R}\\
    \overline{\mtx C}_{\msf R,\msf S}
    & \overline{\mtx C}_{\msf R,\msf R}
\end{array}
\right]_{\mathcal N}.
\]
Here $\mathcal N$ denotes the neighboring box pairs, and
$[\cdot]_{\mathcal N}$ retains their coefficient blocks
and sets all other blocks to zero.
There are no restrictions on the block $\overline{\mtx C}_{\msf S,\msf S}$.
Define the best approximation error in this class as
\begin{equation}
\label{eq:one-level-opt}
    \operatorname{opt}(\mtx A)
    :=
    \inf_{\overline{\mtx A}\in\mathcal M}
    \normfro{\mtx A-\overline{\mtx A}}.
\end{equation}

\begin{lemma}[One-level recovery estimate]
\label{lem:one-level-recovery}
Suppose
\(
    s\geq\max_\tau (n_\tau+k_\tau+p).
\)
Form the Gaussian sketches
\[
    \mtx Y=\mtx A\mtx\Omega,
    \qquad
    \mtx Z=\mtx A^*\mtx\Psi
\]
using $s$ applications of each operator. Using the same sketch pair, recover the box bases and neighboring
$\msf R$-containing coefficients. Let
$\widetilde{\mtx A}\in\mathcal M$ denote the approximation obtained
by completing these recovered coefficients with the exact
$\msf{SS}$ block in the recovered coordinates. Then
\begin{equation}
\label{eq:one-level-quasioptimality}
    \E\normfro{\mtx A-\widetilde{\mtx A}}
    \leq
    \sqrt{2}
    \left(\max_\tau\beta_\tau\right)
    \operatorname{opt}(\mtx A),
\end{equation}
where $\beta_\tau:= \alpha_{k_\tau,p} \sqrt{1+{n_\tau}/{(k_\tau+p-1)}}$ and $\alpha_{k,p}$ is the Gaussian range-recovery factor from
\cref{eq:appendix-range-alpha}.
\end{lemma}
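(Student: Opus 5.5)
We will decompose the error in the recovered coordinates and treat the basis-recovery error and the extraction error separately. Since $\mtx U=\diag_\tau\mtx U_\tau$ and $\mtx V=\diag_\tau\mtx V_\tau$ are unitary, $\normfro{\mtx A-\widetilde{\mtx A}}=\normfro{\mtx C-\widetilde{\mtx C}}$, where $\widetilde{\mtx C}$ has the exact $\msf{SS}$ blocks, the extracted $\msf R$-containing blocks on neighboring pairs, and zeros in the $\msf R$-containing far blocks. The squared error therefore splits into two parts.

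\textbf{Far part.} This is $\sum_\tau\normfro{\mtx C_{\msf R_\tau,\msf F}}^2$ together with the column analogue, counting the $\msf{RR}$ far blocks at most twice.

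\textbf{Near part.} This is the extraction error $\sum_\tau\normfro{\mtx C_{\msf R,\msf F}\widehat{\mtx\Omega}_{\msf F}\pinv{\widehat{\mtx G}_\Omega}}^2$ plus its adjoint analogue for the $\msf{SR}$ blocks.

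The forward estimate of the $\msf{RR}$ blocks is used, so each near $\msf R$-containing block appears in exactly one of the two families. Grouping row-type terms and column-type terms, we aim for
\[
\normfro{\mtx A-\widetilde{\mtx A}}^2\le \sum_\tau \Bigl(\normfro{\mtx C_{\msf R_\tau,\msf F}}^2+\normfro{\mtx C_{\msf R_\tau,\msf F}\widehat{\mtx\Omega}_{\msf F}\pinv{\widehat{\mtx G}_\Omega}}^2\Bigr)+(\text{adjoint terms}).
\]

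The extraction term is bounded conditionally. By the independence remark above, conditional on $\mtx G_\Omega$ and on the nullified far-field sample that determines $\mtx U_{\msf S_\tau}$, the component of $\widehat{\mtx\Omega}_{\msf F}$ restricted to $\range(\widehat{\mtx G}_\Omega^*)$ is still Gaussian and independent. (Here $\mtx V^*$ is independent of $\mtx\Omega$, so $\widehat{\mtx\Omega}$ is Gaussian.) The standard identity $\E\normfro{\mtx M\mtx\Gamma\pinv{\mtx G}}^2=\normfro{\mtx M}^2\,\E\normfro{\pinv{\mtx G}}^2$, with $\mtx\Gamma$ Gaussian independent of $\mtx G$, then applies. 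For an $n_\tau\times s'$ Gaussian $\mtx G$ with $s'-n_\tau\geq k_\tau+p$, the inverse Wishart bound gives
\[
\E\normfro{\pinv{\mtx G}}^2=\frac{n_\tau}{s'-n_\tau-1}\le\frac{n_\tau}{k_\tau+p-1}
\]
in the real case, and a smaller value in the complex case. Hence
\[
\E\Bigl[\normfro{\mtx C_{\msf R,\msf F}}^2+\text{extraction}^2\Bigr]\le\Bigl(1+\frac{n_\tau}{k_\tau+p-1}\Bigr)\,\E\normfro{(\mtx I-\mtx U_{\msf S}\mtx U_{\msf S}^*)\mtx A_{\msf B,\msf F}}^2.
\]
The most delicate point is the exact orthogonal splitting of $\mtx\Omega_{\msf F}$ into the $\operatorname{null}(\mtx G_\Omega)$ and $\range(\mtx G_\Omega^*)$ components. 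One must check that $\widehat{\mtx G}_\Omega$ (rotated by $\mtx V^*$) has the same row space structure as $\mtx G_\Omega$, so that $\pinv{\widehat{\mtx G}_\Omega}$ only sees the independent component. If it does not, we instead rerun the argument with the unrotated decomposition, using that $\mtx V$ is block diagonal.

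Next, invoke \cref{lem:gaussian-range-estimate} for the nullified sketch $\mtx A_{\msf B,\msf F}\mtx\Omega_{\msf F}\mtx K_\Omega$ with $k_\tau+p$ Gaussian columns. This gives
\[
\E\normfro{(\mtx I-\mtx U_{\msf S}\mtx U_{\msf S}^*)\mtx A_{\msf B,\msf F}}^2\le\alpha_{k_\tau,p}^2\min_{\rank\le k_\tau}\normfro{\mtx A_{\msf B,\msf F}-\mtx X}^2,
\]
possibly in a squared or root form depending on how \cref{eq:appendix-range-alpha} is stated. The same bound holds for the source side.

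Finally, compare with $\operatorname{opt}(\mtx A)$. For any $\overline{\mtx A}\in\mathcal M$, the block row $\overline{\mtx A}_{\msf B,\msf F}$ has rank at most $k_\tau$, being $\overline{\mtx U}_{\msf S_\tau}$ times coefficients. Thus
\[
\sum_\tau\min_{\rank\le k_\tau}\normfro{\mtx A_{\msf B,\msf F}-\mtx X}^2\le\normfro{\mtx A-\overline{\mtx A}}^2,
\]
since the blocks $(\tau,\msf F_\tau)$ are disjoint over $\tau$. The same holds for the column blocks, which are likewise disjoint. Summing the row and column families yields
\[
\E\normfro{\mtx A-\widetilde{\mtx A}}^2\le2\bigl(\max_\tau\beta_\tau\bigr)^2\operatorname{opt}(\mtx A)^2,
\]
and Jensen's inequality gives the claim.
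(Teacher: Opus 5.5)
Your proposal is correct and follows essentially the same route as the paper's proof. The shared steps are: unitary invariance; splitting $\mtx\Omega_{\msf F}$ into conditionally independent Gaussian components along $\operatorname{null}(\mtx G_\Omega)$ and $\range(\mtx G_\Omega^*)$; the range lemma for the discarded block; the fixed-matrix Gaussian identity with $\E\normfro{\pinv{\mtx G_\Omega}}^2\le n_\tau/(k_\tau+p-1)$ for the extraction term; forward estimates on $\msf{RR}$ blocks; and disjointness of block rows and of block columns to get the factor $2$.

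The point you flag as delicate resolves at once, exactly as in the paper. Since $\widehat{\mtx G}_\Omega=\mtx V_{\msf B\cup\msf N}^*\mtx G_\Omega$ with $\mtx V_{\msf B\cup\msf N}$ unitary, $\pinv{\widehat{\mtx G}_\Omega}=\pinv{\mtx G_\Omega}\mtx V_{\msf B\cup\msf N}$ has range $\range(\mtx G_\Omega^*)$. So the extraction term depends on $\mtx\Omega_{\msf F}$ only through its projection onto that subspace, which is independent of the nullified sample that determines $\mtx U_{\msf S_\tau}$.
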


After randomized recovery of the box bases and neighboring
$\msf R$-containing coefficients, the approximation in the lemma
is completed with the exact $\msf{SS}$ block in the recovered
coordinates. This isolates the error introduced at the current
level, and the surviving block forms the smaller problem passed
to the next coarser level.
The expected recovery error is within an explicit factor
of the best error in the prescribed one-level class. 
The lemma adapts the fresh-sketch estimate of
Amsel et al.~\cite[Appendix~C]{amsel2026quasioptimal} to the reused-sketch
setting.  The proof is given in Appendix~\ref{sec:app-one-level-recovery}.
 \section{Multilevel Approximation}
\label{sec:multilevel-prd}

\Cref{sec:one-level-prd} describes the approximation at a single scale.
We now extend the same coordinate split across an adaptive hierarchy.
The tree determines the box partition and neighbor relation at each level;
the repeated splits produce nested coordinate spaces and a telescoping
decomposition, from which the stored $\mathcal H^2$ approximation is assembled.

\subsection{Adaptive Tree and Level Partitions}
\label{subsec:adaptive-tree}

Starting from a root box on level $0$, boxes are partitioned into children until every
leaf contains at most $m_{\mathrm{leaf}}$ points.
Let $\mathcal L$ denote the set of leaves and let $L$ be the maximum leaf depth.
Because the refinement is adaptive, leaves may occur on different levels, so the
level-$\ell$ boxes alone do not necessarily partition the physical points.  
At level $\ell$, we define
\[
    \mathcal T_\ell:=\{\tau:\operatorname{lev}(\tau)=\ell\},\qquad
    \mathcal B_\ell:=\mathcal T_\ell\cup
    \{\lambda\in\mathcal L:\operatorname{lev}(\lambda)<\ell\}.
\]
Every point belongs either to a level-$\ell$ box or to a leaf whose branch
terminated at a coarser level, so $\mathcal B_\ell$ is a partition. Moreover, $\mathcal B_\ell$ refines $\mathcal B_{\ell-1}$: each box
is either an unchanged leaf or a child of a nonleaf in
$\mathcal T_{\ell-1}$.
Since $L$ is the maximum leaf depth, $\mathcal B_L=\mathcal L$.

For strong admissibility, let $\near_\ell(\tau)$ denote the boxes in
$\mathcal B_\ell$ that are adjacent to $\tau$, including
$\tau$ itself.  We assume adjacent leaves differ by at most one level, as in a
standard $2{:}1$ balanced adaptive tree \cite[Sec.~2.2]{malhotra2015pvfmm}.  Thus a box may have both
same-level neighbors and neighboring coarser leaves.  
Figure~\ref{fig:adaptive-tree} illustrates this partition.

\begin{figure}[H]
    \centering
    \resizebox{0.60\linewidth}{!}{\begin{tikzpicture}[
    x=10mm,
    y=10mm,
    every node/.style={font=\normalsize},
    box/.style={draw=black!45,line width=0.55pt,fill=white},
]
    \definecolor{basisBlue}{HTML}{405F73}
    \definecolor{nearGold}{HTML}{C6A34A}

\path[box,fill=nearGold!22] (0,0) rectangle (2,2);
    \path[box,fill=nearGold!22] (0,2) rectangle (2,4);
    \path[box,fill=nearGold!22] (2,2) rectangle (4,4);
    \path[box,fill=nearGold!22] (4,0) rectangle (6,2);
    \path[box,fill=nearGold!22] (4,2) rectangle (6,4);

    \foreach \x in {2,3} {
        \foreach \y in {0,1} {
            \path[box,fill=basisBlue!16] (\x,\y) rectangle +(1,1);
        }
    }

\path[draw=nearGold,fill=nearGold!35,line width=1.15pt]
        (0,0) rectangle (2,2);
    \path[draw=black,fill=basisBlue!16,line width=1.25pt]
        (2,1) rectangle (3,2);
    \path[draw=basisBlue,fill=basisBlue!28,line width=1.05pt]
        (3,1) rectangle (4,2);

\draw[basisBlue,dashed,line width=0.9pt]
        (2,0) rectangle (4,2);

\foreach \x/\y in {
        0.35/0.35,0.75/1.50,1.55/0.70,
        0.40/2.45,1.15/3.30,1.65/2.75,
        2.35/2.55,3.25/3.35,3.65/2.45,
        4.40/0.45,5.35/1.30,5.65/0.70,
        4.30/2.55,4.95/3.45,5.65/2.80,
        2.15/0.20,2.45/0.35,2.75/0.18,3.10/0.25,3.45/0.40,3.75/0.20,
        2.20/0.70,2.55/0.82,2.85/0.62,3.20/0.75,3.55/0.65,3.82/0.88,
        2.12/1.18,2.42/1.32,2.78/1.15,3.10/1.25,3.40/1.18,3.75/1.35,
        2.22/1.72,2.55/1.82,2.84/1.60,3.18/1.72,3.52/1.62,3.82/1.85}
        \fill[black!48] (\x,\y) circle (0.035);

    \node[font=\normalsize,fill=nearGold!35,inner sep=0.5pt] at (1,1.12) {$\lambda$};
    \node[text=nearGold!80!black,fill=nearGold!35,inner sep=0.5pt]
        at (1,0.72) {coarser leaf};
    \node[font=\normalsize,fill=basisBlue!16,inner sep=0.5pt] at (2.5,1.5) {$\tau$};
    \node[font=\normalsize,text=black,fill=basisBlue!28,inner sep=0.5pt]
        at (3.5,1.5) {$\sigma$};
    \node[anchor=west] at (6.55,2.85)
        {\tikz\draw[basisBlue!65,fill=basisBlue!16,line width=0.9pt]
            (0,0) rectangle (0.35,0.35);\quad $\mathcal T_\ell$: level-$\ell$ boxes};
    \node[anchor=west] at (6.55,2.05)
        {\tikz\draw[nearGold,fill=nearGold!22,line width=0.9pt]
            (0,0) rectangle (0.35,0.35);\quad $\mathcal B_\ell\setminus\mathcal T_\ell$: coarser leaves};
    \node[anchor=west,align=left] at (6.55,1.05)
        {$\mathcal B_\ell$: box partition at level $\ell$};
\end{tikzpicture}
 }
\caption{An adaptive partition $\mathcal B_{\ell}$ at level $\ell$.
Boxes in $\mathcal T_\ell$ are processed at this level, while coarser leaves
remain unchanged.  Highlighted are $\tau$, its same-level neighbor $\sigma$,
and a neighboring coarser leaf $\lambda$.}
\label{fig:adaptive-tree}
\end{figure}

\subsection{Nested Coordinates and Telescoping Decomposition}
\label{subsec:nested-coordinates}

The one-level split leaves a smaller matrix on the surviving
coordinates. To represent this remaining matrix at coarser scales,
we use nested bases: each parent basis is expressed in the surviving
bases of its children. The local factors $\mtx U_{\msf S_\tau}$
and $\mtx V_{\msf S_\tau}$ are bases in physical coordinates
at leaves and transfer matrices at nonleaves.

The maps $\mathcal U_\tau$ and $\mathcal V_\tau$ express the
coordinates entering the local split in physical space.
At a leaf, these maps are identities; at a nonleaf, they collect
the surviving bases of the children:
\[
    \mathcal U_{\tau}
    :=
    \diag_{\nu\in\operatorname{ch}(\tau)}
    \mathcal U_{\msf S_\nu},
    \qquad
    \mathcal V_{\tau}
    :=
    \diag_{\nu\in\operatorname{ch}(\tau)}
    \mathcal V_{\msf S_\nu}.
\]
The corresponding surviving bases in physical coordinates are
\[
    \mathcal U_{\msf S_\tau}
    :=
    \mathcal U_\tau\mtx U_{\msf S_\tau},
    \qquad
    \mathcal V_{\msf S_\tau}
    :=
    \mathcal V_\tau\mtx V_{\msf S_\tau}.
\]

Let $\msf S_\ell$ index the retained coordinates on
$\mathcal B_\ell$. The corresponding maps into physical space,
$\mathcal U_{\msf S_\ell}$ and $\mathcal V_{\msf S_\ell}$,
are block diagonal, with surviving box bases on
$\mathcal T_\ell$ and identity blocks on coarser leaves.
The corresponding coefficient matrix is
\begin{equation}
\label{eq:level-core}
    \mtx C_{\msf S_\ell,\msf S_\ell}
    :=
    \mathcal U_{\msf S_\ell}^*
    \mtx A
    \mathcal V_{\msf S_\ell}.
\end{equation}
For the initial physical coordinates, set
$\mathcal U_{\msf S_{L+1}}=\mathcal V_{\msf S_{L+1}}=\mtx I$.
Successive coefficient matrices act on different coordinate spaces.
To compare them, we express them in physical coordinates:
\begin{equation}
\label{eq:levelwise-approximation}
    \mtx W_\ell
    :=
    \mathcal U_{\msf S_\ell}
    \mtx C_{\msf S_\ell,\msf S_\ell}
    \mathcal V_{\msf S_\ell}^*,
    \qquad
    \mtx W_{L+1}:=\mtx A.
\end{equation}
The contribution associated with level $\ell$ is the successive difference
\begin{equation}
\label{eq:levelwise-residual}
    \mtx D_\ell
    :=
    \mtx W_{\ell+1}-\mtx W_\ell.
\end{equation}
Let $\ell_0$ denote the coarsest transformed level.
Summing these differences gives a telescoping decomposition
\begin{equation}
\label{eq:multilevel-telescope}
    \mtx A
    =
    \mtx W_{\ell_0}
    +
    \sum_{\ell=\ell_0}^{L}\mtx D_\ell.
\end{equation}
For strong admissibility, we take $\ell_0=2$ and, for simplicity,
assume $\mathcal B_2=\mathcal T_2$.
For weak admissibility, we take $\ell_0=1$.

\subsection{Levelwise coefficients and the stored representation}
\label{subsec:levelwise-coefficients}

We express each level contribution in completed box
coordinates to identify the coefficients retained in the
approximation. 
To compare adjacent levels, complete the box basis or transfer matrix for
$\tau\in\mathcal T_\ell$ to a unitary transform,
\begin{equation}
\label{eq:completed-box-transforms}
\begin{gathered}
    \mtx U_\tau=[\mtx U_{\msf S_\tau}\ \mtx U_{\msf R_\tau}],
    \qquad
    [\mathcal U_{\msf S_\tau}\ \mathcal U_{\msf R_\tau}]
    :=\mathcal U_\tau\mtx U_\tau,\\
    \mtx V_\tau=[\mtx V_{\msf S_\tau}\ \mtx V_{\msf R_\tau}],
    \qquad
    [\mathcal V_{\msf S_\tau}\ \mathcal V_{\msf R_\tau}]
    :=\mathcal V_\tau\mtx V_\tau.
\end{gathered}
\end{equation}

For $\tau,\sigma\in\mathcal T_\ell$,
\cref{eq:level-core,eq:levelwise-approximation} give
\begin{equation}
\label{eq:boxwise-projected-component}
    (\mtx W_\ell)_{\tau\sigma}
    =\mathcal U_{\msf S_\tau}
      \mtx C_{\msf S_\tau,\msf S_\sigma}
      \mathcal V_{\msf S_\sigma}^*,
    \qquad
    \mtx C_{\msf S_\tau,\msf S_\sigma}
    =\mathcal U_{\msf S_\tau}^*\mtx A_{\tau\sigma}
      \mathcal V_{\msf S_\sigma}.
\end{equation}
In the completed coordinates, $(\mtx W_\ell)_{\tau\sigma}$
has only an $\msf{SS}$ block:
\[
    \begin{bmatrix}
        \mathcal U_{\msf S_\tau} & \mathcal U_{\msf R_\tau}
    \end{bmatrix}^{*}
    (\mtx W_\ell)_{\tau\sigma}
    \begin{bmatrix}
        \mathcal V_{\msf S_\sigma} & \mathcal V_{\msf R_\sigma}
    \end{bmatrix}
    =
    \begin{bmatrix}
        \mtx C_{\msf S_\tau,\msf S_\sigma} & \mtx0\\
        \mtx0 & \mtx0
    \end{bmatrix}.
\]
To identify $\mtx D_\ell$, we express
$(\mtx W_{\ell+1})_{\tau\sigma}$ on the finer level in the same coordinates.
By nestedness, its $\msf{SS}$ block agrees with that of
$(\mtx W_\ell)_{\tau\sigma}$:
\[
    \begin{bmatrix}
        \mathcal U_{\msf S_\tau} & \mathcal U_{\msf R_\tau}
    \end{bmatrix}^{*}
    (\mtx W_{\ell+1})_{\tau\sigma}
    \begin{bmatrix}
        \mathcal V_{\msf S_\sigma} & \mathcal V_{\msf R_\sigma}
    \end{bmatrix}
    =
    \begin{bmatrix}
        \mtx C_{\msf S_\tau,\msf S_\sigma} &
        \mtx C_{\msf S_\tau,\msf R_\sigma}\\
        \mtx C_{\msf R_\tau,\msf S_\sigma} &
        \mtx C_{\msf R_\tau,\msf R_\sigma}
    \end{bmatrix}.
\]
Subtracting cancels the common $\msf{SS}$ block.
The remaining blocks are small for well-separated boxes
because the local bases are chosen to capture the far-field
block rows and columns of the incoming coefficient matrix,
as in \Cref{sec:one-level-prd}:
\begin{equation}
\label{eq:transformed-level-difference}
    \begin{aligned}
    \begin{bmatrix}
        \mathcal U_{\msf S_\tau} & \mathcal U_{\msf R_\tau}
    \end{bmatrix}^{*}
    (\mtx D_\ell)_{\tau\sigma}
    \begin{bmatrix}
        \mathcal V_{\msf S_\sigma} & \mathcal V_{\msf R_\sigma}
    \end{bmatrix}
    &=
    \begin{bmatrix}
        \mtx 0 & \mtx C_{\msf S_\tau,\msf R_\sigma}\\
        \mtx C_{\msf R_\tau,\msf S_\sigma} &
        \mtx C_{\msf R_\tau,\msf R_\sigma}
    \end{bmatrix}\\
    &\approx \mtx 0\qquad \text{when $\tau$ and $\sigma$ are well-separated.}
    \end{aligned}
\end{equation}
For neighboring box pairs these coefficients are retained. For well-separated pairs they are discarded, using the approximation in \eqref{eq:transformed-level-difference}. 

If \(\lambda\in\near_\ell(\tau)\cap(\mathcal B_\ell\setminus\mathcal T_\ell)\), then \(\lambda\) remains in physical coordinates, and only the \(\tau\) side is split:
\begin{equation}
\label{eq:adaptive-one-sided-coefficients}
    \begin{bmatrix}
        \mathcal U_{\msf S_\tau} & \mathcal U_{\msf R_\tau}
    \end{bmatrix}^{*}
    (\mtx D_\ell)_{\tau\lambda}
    =\begin{bmatrix}\mtx0\\ \mtx C_{\msf R_\tau,\lambda}\end{bmatrix},
    \qquad
    (\mtx D_\ell)_{\lambda\tau}
    \begin{bmatrix}
        \mathcal V_{\msf S_\tau} & \mathcal V_{\msf R_\tau}
    \end{bmatrix}
    =\begin{bmatrix}\mtx0 & \mtx C_{\lambda,\msf R_\tau}\end{bmatrix}.
\end{equation}

To express the approximation over the full partition, combine
the local transforms on $\mathcal T_\ell$ with identities on
coarser leaves. These transforms split $\msf S_{\ell+1}$ into
surviving coordinates $\msf S_\ell$ and complementary
coordinates $\msf R_\ell$. Let $\mathcal U_{\msf R_\ell}$ and
$\mathcal V_{\msf R_\ell}$ collect the corresponding
complementary bases in physical coordinates.
Retaining the neighboring coefficient blocks and summing
as in \eqref{eq:multilevel-telescope} gives
\begin{equation}
\label{eq:levelwise-expansion}
    \mtx A
    \approx
    \mathcal U_{\msf S_{\ell_0}}
    \mtx C_{\msf S_{\ell_0},\msf S_{\ell_0}}
    \mathcal V_{\msf S_{\ell_0}}^*
    +\sum_{\ell=\ell_0}^{L}
    \begin{bmatrix}
        \mathcal U_{\msf S_\ell} & \mathcal U_{\msf R_\ell}
    \end{bmatrix}
    \begin{bmatrix}
        \mtx0 & \mtx C_{\msf S_\ell,\msf R_\ell}\\
        \mtx C_{\msf R_\ell,\msf S_\ell} &
        \mtx C_{\msf R_\ell,\msf R_\ell}
    \end{bmatrix}_{\near_\ell}
    \begin{bmatrix}
        \mathcal V_{\msf S_\ell} & \mathcal V_{\msf R_\ell}
    \end{bmatrix}^{*}.
\end{equation}
Here the restriction to $\near_\ell$ retains only neighboring
box-pair blocks in each coefficient submatrix and sets all other
blocks to zero. See \Cref{fig:levelwise-expansions}.

\begin{figure}[!htb]
    \centering
    \resizebox{0.97\linewidth}{!}{
        \begin{tikzpicture}[
    every node/.style={align=center},
    process/.style={-{Stealth[length=2.0mm]},line width=0.75pt},
    page/.style={
        draw=black,
        fill=white,
        line width=0.95pt,
        inner sep=0.25mm,
    },
]
    \node[minimum width=32mm,minimum height=34mm,inner sep=0pt]
        (geom-frame) {};
    \node at (geom-frame.center) (geom)
        {\includegraphics[height=30mm]
        {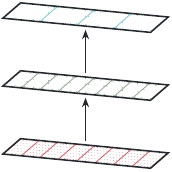}};
    \node[font=\small,anchor=south east]
        at ($(geom-frame.north west)+(-2mm,0)$) {(a)};
    \node[font=\small,anchor=east] at ($(geom.west)+(-0.8mm,11mm)$)
        {$\msf S_2$};
    \node[font=\small,anchor=east] at ($(geom.west)+(-0.8mm,0)$)
        {$(\msf S_2,\msf R_2)$};
    \node[font=\small,anchor=east] at ($(geom.west)+(-0.8mm,-11mm)$)
        {$(\msf S_3,\msf R_3)$};

    \node[page,anchor=west] (A) at ($(geom-frame.east)+(5mm,11mm)$)
        {\includegraphics[width=28mm]
        {figures/recovery_multilevel_strip_physical_partition.pdf}};
    \node[font=\small,fill=white,inner sep=0.8pt]
        at (A.center) {$\mtx A$};
    \node[right=1.2mm of A,font=\small] (approx) {$\approx$};

    \node[right=1.2mm of approx,font=\small] (U3)
        {$[\mathcal U_{\msf S_3}\ \mathcal U_{\msf R_3}]$};
    \node[page,right=0.5mm of U3] (C3)
        {\includegraphics[width=25mm]
        {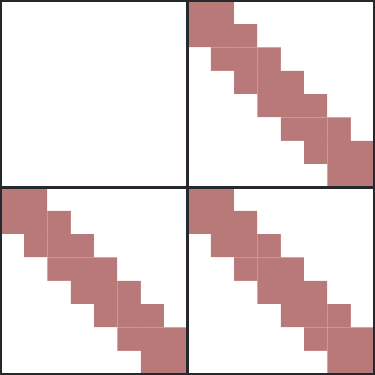}};
    \node[right=0.5mm of C3,font=\small] (V3)
        {$[\mathcal V_{\msf S_3}\ \mathcal V_{\msf R_3}]^*$};
    \node[font=\small] at
        ($(C3.north west)!0.5!(C3.center)$) {$\mtx 0$};

    \node[font=\small] (plus32)
        at ($(approx.west)+(0,-27mm)$) {$+$};
    \node[right=2mm of plus32,font=\small] (U2)
        {$[\mathcal U_{\msf S_2}\ \mathcal U_{\msf R_2}]$};
    \node[page,right=0.5mm of U2] (C2)
        {\includegraphics[width=12.5mm]
        {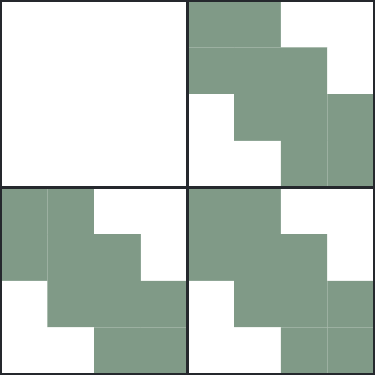}};
    \node[right=0.5mm of C2,font=\small] (V2)
        {$[\mathcal V_{\msf S_2}\ \mathcal V_{\msf R_2}]^*$};
    \node[font=\small] at
        ($(C2.north west)!0.5!(C2.center)$) {$\mtx 0$};

    \node[right=2mm of V2,font=\small] (plus2S) {$+$};
    \node[right=2mm of plus2S,font=\small] (U2S)
        {$\mathcal U_{\msf S_2}$};
    \node[page,right=0.5mm of U2S] (C2SS)
        {\includegraphics[width=6.25mm]
        {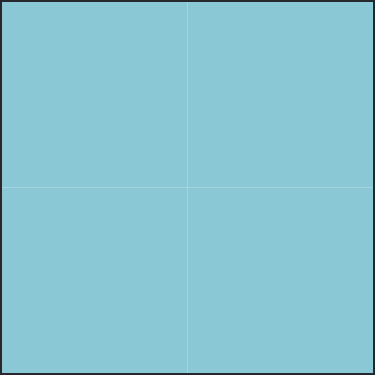}};
    \node[right=0.5mm of C2SS,font=\small] (V2S)
        {$\mathcal V_{\msf S_2}^*$};
    \node[below=0.9mm of C2SS,font=\small]
        {$\mtx C_{\msf S_2,\msf S_2}$};
\end{tikzpicture}
     }
    \vspace{5mm}
    \resizebox{0.97\linewidth}{!}{
        \begin{tikzpicture}[
    every node/.style={align=center},
    process/.style={-{Stealth[length=2.0mm]},line width=0.75pt},
    page/.style={
        draw=black,
        fill=white,
        line width=0.95pt,
        inner sep=0.25mm,
    },
]
    \node[minimum width=32mm,minimum height=34mm,inner sep=0pt]
        (geom-frame) {};
    \node at (geom-frame.center) (geom)
        {\includegraphics[height=34mm]
        {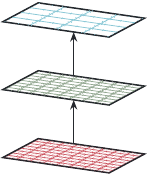}};
    \node[font=\small,anchor=south east]
        at ($(geom-frame.north west)+(-2mm,0)$) {(b)};
    \node[font=\small,anchor=east] at ($(geom.west)+(-0.8mm,12.5mm)$)
        {$\msf S_2$};
    \node[font=\small,anchor=east] at ($(geom.west)+(-0.8mm,0)$)
        {$(\msf S_2,\msf R_2)$};
    \node[font=\small,anchor=east] at ($(geom.west)+(-0.8mm,-12.5mm)$)
        {$(\msf S_3,\msf R_3)$};

    \node[page,anchor=west] (A) at ($(geom-frame.east)+(5mm,11mm)$)
        {\includegraphics[width=28mm]
        {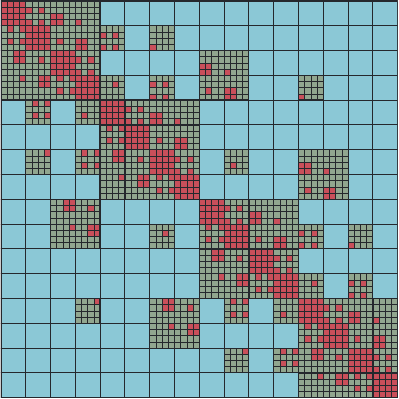}};
    \node[font=\small,fill=white,inner sep=0.8pt]
        at (A.center) {$\mtx A$};
    \node[right=1.2mm of A,font=\small] (approx) {$\approx$};
    \node[right=1.2mm of approx,font=\small] (U3)
        {$[\mathcal U_{\msf S_3}\ \mathcal U_{\msf R_3}]$};
    \node[page,right=0.5mm of U3] (C3)
        {\includegraphics[width=25mm]
        {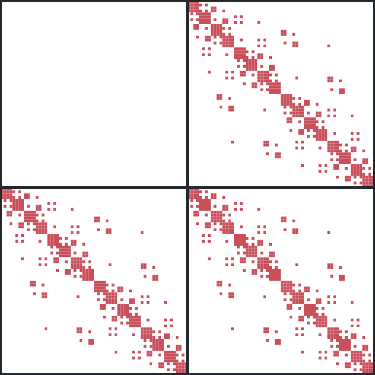}};
    \node[right=0.5mm of C3,font=\small] (V3)
        {$[\mathcal V_{\msf S_3}\ \mathcal V_{\msf R_3}]^*$};
    \node[font=\small] at
        ($(C3.north west)!0.5!(C3.center)$) {$\mtx 0$};

    \node[font=\small] (plus32)
        at ($(approx.west)+(0,-27mm)$) {$+$};
    \node[right=2mm of plus32,font=\small] (U2)
        {$[\mathcal U_{\msf S_2}\ \mathcal U_{\msf R_2}]$};
    \node[page,right=0.5mm of U2] (C2)
        {\includegraphics[width=12.5mm]
        {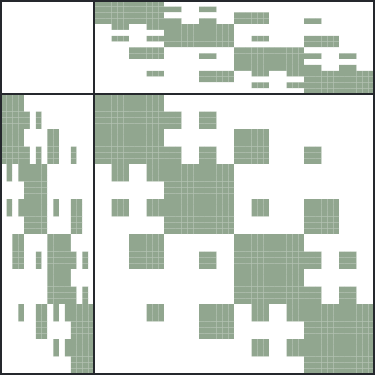}};
    \node[right=0.5mm of C2,font=\small] (V2)
        {$[\mathcal V_{\msf S_2}\ \mathcal V_{\msf R_2}]^*$};
    \node[font=\small] at
        ($(C2.north west)!0.25!(C2.center)$) {$\mtx 0$};

    \node[right=2mm of V2,font=\small] (plus2S) {$+$};
    \node[right=2mm of plus2S,font=\small] (U2S)
        {$\mathcal U_{\msf S_2}$};
    \node[page,right=0.5mm of U2S] (C2SS)
        {\includegraphics[width=6.25mm]
        {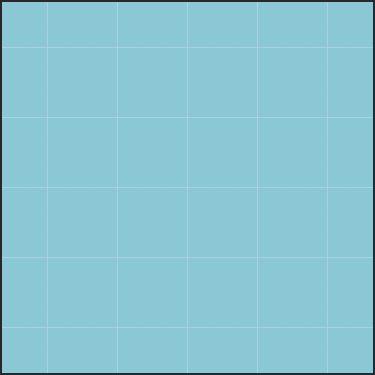}};
    \node[right=0.5mm of C2SS,font=\small] (V2S)
        {$\mathcal V_{\msf S_2}^*$};
    \node[below=0.9mm of C2SS,font=\small]
        {$\mtx C_{\msf S_2,\msf S_2}$};

\end{tikzpicture}
     }
    \caption{The multilevel approximation for \(L=3\) on a uniform binary tree and quadtree. Red and green show the neighboring \(\msf R\)-containing coefficient blocks retained at levels 3 and 2; cyan shows the terminal \(\msf S_{\ell_0}\msf S_{\ell_0}\) coefficient matrix. Each level matrix is ordered as \((\msf S_\ell,\msf R_\ell)\), with a zero \(\msf S\msf S\) block because that interaction is represented at coarser levels.}
    \label{fig:levelwise-expansions}
\end{figure}
Only the local leaf bases and transfer matrices, the retained neighboring
coefficient blocks, and the terminal coefficient matrix are stored.
The calligraphic maps in
\eqref{eq:levelwise-expansion} are applied recursively and never formed
explicitly. \suppref{app:deterministic-matvec} gives the corresponding
matrix--vector multiplication algorithm.

The following algebraic equivalence holds without assuming
nestedness or orthogonality of the box bases. The conventional form stores near interactions at the leaves and far interactions at their first well-separated level.

\begin{proposition}[Equivalence with the conventional hierarchical form]
\label{prop:interaction-list-equivalence}
For fixed box bases, the levelwise approximation
\eqref{eq:levelwise-expansion} is an exact algebraic regrouping of the
corresponding conventional adaptive hierarchical approximation.
\end{proposition}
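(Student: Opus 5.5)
The plan is to rewrite each level term of \eqref{eq:levelwise-expansion} as a difference of two block-restricted copies of the level matrices $\mtx W_{\ell+1}$ and $\mtx W_\ell$, and then telescope over levels. Because only fixed box bases are assumed, I will work throughout with the completed transforms \eqref{eq:completed-box-transforms}, assumed invertible but not necessarily unitary. The coefficient blocks are defined by expressing each box-pair block of $\mtx W_{\ell+1}$ in these completed coordinates, with $[\mathcal U_{\msf S_\tau}\ \mathcal U_{\msf R_\tau}]^{-1}$ in place of the adjoint. Then $\mtx C_{\msf S_\tau,\msf S_\sigma}$ is the coefficient defining $(\mtx W_\ell)_{\tau\sigma}$, and the $\msf S\msf S$-cancellation in \eqref{eq:transformed-level-difference} holds by construction rather than by orthogonality.

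\textbf{Step 1 (blockwise identity).} For a neighboring pair $\tau,\sigma\in\mathcal B_\ell$, the $(\tau,\sigma)$ block of the level-$\ell$ summand in \eqref{eq:levelwise-expansion} equals $(\mtx W_{\ell+1})_{\tau\sigma}-(\mtx W_\ell)_{\tau\sigma}$. Expanding $[\mathcal U_{\msf S}\ \mathcal U_{\msf R}]$ times the full coefficient block gives $(\mtx W_{\ell+1})_{\tau\sigma}$, and zeroing the $\msf S\msf S$ block subtracts exactly $\mathcal U_{\msf S_\tau}\mtx C_{\msf S_\tau,\msf S_\sigma}\mathcal V_{\msf S_\sigma}^*$. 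For the one-sided adaptive pairs in \eqref{eq:adaptive-one-sided-coefficients}, where $\lambda$ is a coarser leaf, the same identity holds with identity transforms on the $\lambda$ side. Hence the approximation equals
\[
\mtx W_{\ell_0}+\sum_{\ell=\ell_0}^{L}\bigl([\mtx W_{\ell+1}]_{\near_\ell}-[\mtx W_\ell]_{\near_\ell}\bigr),
\]
where $[\cdot]_{\near_\ell}$ keeps the physical blocks of neighboring pairs in $\mathcal B_\ell$.

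\textbf{Step 2 (regrouping).} Next I rearrange the sum by level of the matrix $\mtx W_j$ rather than by level of the restriction. Since $\mathcal B_{\ell+1}$ refines $\mathcal B_\ell$, the restriction $[\mtx W_{\ell+1}]_{\near_\ell}$ is a sum over pairs in $\mathcal B_{\ell+1}$ whose ancestors in $\mathcal B_\ell$ are adjacent. Grouping the terms involving $\mtx W_j$ for $\ell_0<j\le L$ gives $[\mtx W_j]_{\near_{j-1}}-[\mtx W_j]_{\near_j}$. This is exactly $\mtx W_j$ restricted to the interaction list at level $j$: pairs in $\mathcal B_j$ that are not adjacent but whose parents, or the unchanged leaves themselves, are adjacent in $\mathcal B_{j-1}$. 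On such a pair, $\mtx W_j$ is $\mathcal U_{\msf S_\tau}\mtx C_{\msf S_\tau,\msf S_\sigma}\mathcal V_{\msf S_\sigma}^*$, which is precisely a conventional far-field coupling block. Two boundary cases remain.
\begin{itemize}
\item The level-$\ell_0$ terms combine as $\mtx W_{\ell_0}-[\mtx W_{\ell_0}]_{\near_{\ell_0}}$. This is the coarsest interaction list, since $\mathcal B_{\ell_0}=\mathcal T_{\ell_0}$ and every non-adjacent pair there is admissible.
\item The top term $[\mtx W_{L+1}]_{\near_L}=[\mtx A]_{\near_L}$ is the dense near field on the leaves $\mathcal B_L=\mathcal L$.
\end{itemize}

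\textbf{Step 3 (matching the conventional form).} It remains to check that these index sets reproduce the conventional adaptive construction. Each admissible pair appears exactly once, at its first well-separated level; each pair of adjacent leaves appears once in the near field. Nothing else survives the cancellation.

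The main obstacle is the adaptive bookkeeping in Steps 2 and 3. An unchanged coarser leaf $\lambda$ persists in several $\mathcal B_\ell$. Its blocks in $\mtx W_j$ mix a physical side with a transformed side, and on a persistent leaf the surviving coordinates reduce to the identity, so $\mathcal U_{\msf R}$ is empty there. I must verify that a pair involving $\lambda$ contributes with net coefficient one, either in the near field or at the unique level where it first becomes non-adjacent. The $2{:}1$ balance assumption should be used here. It ensures that adjacency in $\mathcal B_\ell$ between a leaf and a box implies adjacency of the relevant ancestors, so the relation $\near_{j}\subseteq\near_{j-1}$ lifted through parents holds. That inclusion is what makes the difference of restrictions an honest restriction to a set of pairs. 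I would first prove this inclusion as a separate combinatorial lemma and then perform the regrouping.
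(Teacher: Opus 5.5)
Your proposal is correct and is essentially the paper's proof. The paper also identifies the level-$\ell$ summand of \eqref{eq:levelwise-expansion} with $[\mtx D_\ell]_{\mathcal N_\ell}$, uses the nesting $\mathcal N_L\subseteq\cdots\subseteq\mathcal N_{\ell_0}$ to define $\mathcal I_\ell=\mathcal N_{\ell-1}\setminus\mathcal N_\ell$ and $\mathcal I_{\ell_0}=(\mathcal N_{\ell_0})^c$, and regroups the same telescoping sum, only in the reverse direction. Two small remarks: the nesting needs no $2{:}1$ balance, since every box of $\mathcal B_j$ lies inside a box of $\mathcal B_{j-1}$ and touching boxes have touching containers; and in your non-unitary variant $[\mathcal U_{\msf S_\tau}\ \mathcal U_{\msf R_\tau}]$ is tall for nonleaf $\tau$, so you should invert the square factor $\mtx U_\tau$ (resp.\ $\mtx V_\tau$) on the incoming coefficient block rather than on the physical block.
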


The nested orthonormal bases also give the cross-level
orthogonality used in the multilevel error analysis.

\begin{proposition}[Orthogonality across levels]
\label{lem:orthogonality-levels}
The terminal component $\mtx W_{\ell_0}$ and the exact level contributions
$\mtx D_\ell$ are mutually orthogonal in the Frobenius inner product.
The same cross-level orthogonality holds for levelwise retained or recovered
contributions having the same incoming coordinate spaces and zero
$\msf{SS}$ block.
\end{proposition}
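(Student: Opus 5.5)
The plan is to express every component in a single family of orthogonal projectors built from the nested orthonormal bases. Then orthogonality follows from the fact that components at different levels live on complementary projector blocks.

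For each level $\ell$, define
\[
P_\ell:=\mathcal U_{\msf S_\ell}\mathcal U_{\msf S_\ell}^*,\qquad
Q_\ell:=\mathcal V_{\msf S_\ell}\mathcal V_{\msf S_\ell}^*,
\]
with $P_{L+1}=Q_{L+1}=\mtx I$. We first check that each $\mathcal U_{\msf S_\ell}$ has orthonormal columns. It is block diagonal with blocks $\mathcal U_{\msf S_\tau}=\mathcal U_\tau\mtx U_{\msf S_\tau}$, where $\mathcal U_\tau$ has orthonormal columns by induction and $\mtx U_{\msf S_\tau}$ is orthonormal; coarser leaves contribute identity blocks. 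Nestedness gives $\range(\mathcal U_{\msf S_\ell})\subseteq\range(\mathcal U_{\msf S_{\ell+1}})$, because each parent basis is a combination of its children's surviving bases and unchanged leaves keep their identity blocks. Hence $P_\ell P_{\ell+1}=P_{\ell+1}P_\ell=P_\ell$, and $P_\ell\le P_{\ell+1}$ as projectors; the same holds for $Q_\ell$. From \eqref{eq:level-core}--\eqref{eq:levelwise-approximation} we get $\mtx W_\ell=P_\ell\mtx A Q_\ell$, and therefore
\[
\mtx D_\ell=P_{\ell+1}\mtx A Q_{\ell+1}-P_\ell \mtx A Q_\ell .
\]

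The key step is to write $P_{\ell+1}=P_\ell+P_\ell^\perp$, where $P_\ell^\perp:=P_{\ell+1}-P_\ell=\mathcal U_{\msf R_\ell}\mathcal U_{\msf R_\ell}^*$ is an orthogonal projector, and similarly $Q_{\ell+1}=Q_\ell+Q_\ell^\perp$. Expanding gives
\[
\mtx D_\ell=P_\ell \mtx A Q_\ell^\perp+P_\ell^\perp \mtx A Q_\ell+P_\ell^\perp\mtx A Q_\ell^\perp,
\]
which is the coordinate form of \eqref{eq:transformed-level-difference}. In general, any matrix of the form $\mtx X=[\mathcal U_{\msf S_\ell}\ \mathcal U_{\msf R_\ell}]\,\mtx M\,[\mathcal V_{\msf S_\ell}\ \mathcal V_{\msf R_\ell}]^*$ with $\mtx M_{\msf S\msf S}=\mtx 0$ satisfies two properties:
\begin{itemize}
\item $\mtx X=P_{\ell+1}\mtx X Q_{\ell+1}$;
\item $P_\ell\mtx X Q_\ell=\mtx 0$.
\end{itemize}
This covers the exact $\mtx D_\ell$ and also the retained or recovered contributions, whose $\msf R$-containing blocks may be arbitrary (for instance restricted to $\near_\ell$ or estimated). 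The one-sided adaptive blocks \eqref{eq:adaptive-one-sided-coefficients} fit the same form, since coarser leaves carry identity blocks with empty $\msf R$ part.

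Now take $\ell'<\ell$. Then $\ell'+1\le\ell$, so $P_{\ell'+1}\le P_\ell$ and $Q_{\ell'+1}\le Q_\ell$. Hence $\mtx X_{\ell'}=P_\ell\mtx X_{\ell'}Q_\ell$, and by cyclicity of the trace
\[
\langle \mtx X_\ell,\mtx X_{\ell'}\rangle_F=\operatorname{tr}\!\bigl(\mtx X_\ell^* P_\ell\mtx X_{\ell'}Q_\ell\bigr)=\operatorname{tr}\!\bigl((P_\ell\mtx X_\ell Q_\ell)^*\mtx X_{\ell'}\bigr)=0 .
\]
The terminal component satisfies $\mtx W_{\ell_0}=P_{\ell_0}\mtx W_{\ell_0}Q_{\ell_0}$, and $\ell_0\le\ell$ for every level $\ell$ in the sum, so the same computation shows that $\mtx W_{\ell_0}$ is orthogonal to every $\mtx X_\ell$.

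The main obstacle is the adaptive bookkeeping. We must verify that $P_\ell^\perp$ is genuinely $\mathcal U_{\msf R_\ell}\mathcal U_{\msf R_\ell}^*$, that is, that the completed transforms \eqref{eq:completed-box-transforms} split $\range(\mathcal U_{\msf S_{\ell+1}})$ on each box of $\mathcal T_\ell$ orthogonally into its $\msf S$ and $\msf R$ parts. This requires checking that the children of $\tau\in\mathcal T_\ell$, together with the unchanged leaves, exactly tile the coordinates $\msf S_{\ell+1}$. That tiling follows from $\mathcal B_{\ell+1}$ refining $\mathcal B_\ell$. A leaf $\lambda$ in $\mathcal B_\ell\setminus\mathcal T_\ell$ keeps an identity block at both levels and contributes nothing to $P_\ell^\perp$, while a leaf $\lambda\in\mathcal T_\ell$ has $\mathcal U_\lambda=\mtx I$ and its completed transform splits physical coordinates.
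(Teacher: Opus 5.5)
Your proof is correct and follows essentially the same route as the paper's. The identities $\mtx X_{\ell'}=P_\ell\mtx X_{\ell'}Q_\ell$ and $P_\ell\mtx X_\ell Q_\ell=\mtx 0$ are the projector form of the paper's argument: expressed in level-$\ell$ coordinates, the coarser contribution occupies only the $\msf S_\ell\msf S_\ell$ block, while the level-$\ell$ contribution has zero $\msf{SS}$ block. Beyond the paper, you also explicitly check the nestedness and adaptive-tree bookkeeping, which the paper leaves implicit.
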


The equivalence proof is given in \suppref{app:interaction-list-equivalence}.
The orthogonality proof is given in Appendix~\ref{app:multilevel-proof}.
 \section{Randomized Multilevel Recovery}
\label{sec:algorithms}

To recover the representation in \Cref{sec:multilevel-prd},
we apply the one-level procedure of \Cref{sec:one-level-prd}
to the incoming coefficient matrix
$\mtx C_{\msf S_{\ell+1},\msf S_{\ell+1}}$
at levels $\ell=L,\ldots,\ell_0$.
At each level, the procedure recovers the local box transforms
and neighboring $\msf R_\ell$-containing coefficients,
leaving the surviving matrix for the next coarser level.

In this section, we present two closely related algorithms
that differ in how they obtain sketches of these
coefficient matrices.
Both reuse the same sketch pair for basis recovery and coefficient
extraction within each level.
Resketched recovery forms fresh sketches at every level using
new Gaussian tests and products with $\mtx A$ and $\mtx A^*$.
Carried recovery instead forms one forward--adjoint sketch pair
in physical coordinates and updates it through the hierarchy.
Pseudocode for the one-level routine and both multilevel variants
is given in~\suppref{app:randomized-algorithms}.

\subsection{Resketched Recovery}
\label{subsec:resketched-recovery}

At level $\ell$, resketched recovery draws fresh Gaussian test
matrices $\widehat{\mtx\Omega}_{\msf S_{\ell+1}}$ and
$\widehat{\mtx\Psi}_{\msf S_{\ell+1}}$ in the incoming
coefficient coordinates. The forward sketch is obtained by
mapping the test to physical coordinates, applying $\mtx A$,
and mapping the result back:
\begin{equation}
\label{eq:incoming-matrix-sketches}
    \widehat{\mtx Y}_{\msf S_{\ell+1}}
    :=
    \mathcal U_{\msf S_{\ell+1}}^*\mtx A
    \bigl(
        \mathcal V_{\msf S_{\ell+1}}
        \widehat{\mtx\Omega}_{\msf S_{\ell+1}}
    \bigr)
    =
    \mtx C_{\msf S_{\ell+1},\msf S_{\ell+1}}
    \widehat{\mtx\Omega}_{\msf S_{\ell+1}}.
\end{equation}
The adjoint sketch is obtained analogously using $\mtx A^*$,
with the target and source coordinate maps interchanged.
These sketches sample the exact incoming coefficient matrix.
Conditional on the bases recovered at finer levels, the new
test matrices are independent standard Gaussians in the
incoming coordinates. Consequently, the one-level estimate
applies at every level.

After level $\ell_0$ has been processed, the terminal coefficient
matrix $\mtx C_{\msf S_{\ell_0},\msf S_{\ell_0}}$ is recovered
exactly using one forward application of $\mtx A$ with
$|\msf S_{\ell_0}|$ right-hand sides.

\subsection{Carried Recovery}
\label{subsec:carried-recovery}

Carried recovery draws one pair of dense Gaussian test matrices
$\mtx\Omega,\mtx\Psi$ in physical coordinates and forms
\[
    \mtx Y=\mtx A\mtx\Omega,
    \qquad
    \mtx Z=\mtx A^*\mtx\Psi.
\]
These are the only operator applications used to sample the
hierarchy. At each level, the one-level procedure acts on the
tests and sketches in the incoming $\msf S_{\ell+1}$ coordinates.
To obtain the sketches for the next coarser level, subtract
the recovered neighboring contribution and retain the surviving
rows. For each box $\tau\in\mathcal B_\ell$, the forward update is
\begin{equation}
\label{eq:carried-forward-update}
    \widehat{\mtx Y}_{\msf S_\tau}
    \gets
    \widehat{\mtx Y}_{\msf S_\tau}
    -
    \sum_{\sigma\in\near_\ell(\tau)}
    \widetilde{\mtx C}_{\msf S_\tau,\msf R_\sigma}
    \widehat{\mtx\Omega}_{\msf R_\sigma}.
\end{equation}
The adjoint update is analogous, with target and source coordinates interchanged.

The subtraction removes only the recovered neighboring contribution.
To describe what remains, let $\widetilde{\mtx D}_j$ denote the
physical-coordinate contribution assembled from the recovered
$\msf{SR}$, $\msf{RS}$, and $\msf{RR}$ coefficients at level $j$, and define
\[
    \mtx E_j:=\mtx D_j-\widetilde{\mtx D}_j.
\]
The error $\mtx E_j$ contains both the discarded far-field contribution and the
error in the recovered neighboring coefficients.

After levels $j=\ell+1,\ldots,L$ have been processed, the forward test
matrix and sketch entering level $\ell$ satisfy
\begin{equation}
\label{eq:carried-current-sketches}
    \widehat{\mtx\Omega}_{\msf S_{\ell+1}}
    =
    \mathcal V_{\msf S_{\ell+1}}^*\mtx\Omega,
    \qquad
    \widehat{\mtx Y}_{\msf S_{\ell+1}}
    =
    \mtx C_{\msf S_{\ell+1},\msf S_{\ell+1}}
    \widehat{\mtx\Omega}_{\msf S_{\ell+1}}
    +
    \mathcal U_{\msf S_{\ell+1}}^*
    \left(
        {\textstyle\sum}_{j=\ell+1}^{L}\mtx E_j
    \right)\mtx\Omega .
\end{equation}
The corresponding adjoint identities are analogous.  Therefore carried recovery
samples the exact incoming coefficient matrix together with accumulated unrecovered
errors from finer levels.  

After level $\ell_0$ has been processed, the terminal coefficient matrix is
estimated from the surviving forward sketch by
\begin{equation}
\label{eq:carried-terminal-recovery}
    \widetilde{\mtx C}_{\msf S_{\ell_0},\msf S_{\ell_0}}
    :=
    \widehat{\mtx Y}_{\msf S_{\ell_0}}
    \bigl(
        \widehat{\mtx\Omega}_{\msf S_{\ell_0}}
    \bigr)^\dagger.
\end{equation}
Provided $\widehat{\mtx\Omega}_{\msf S_{\ell_0}}$ has full row
rank, this estimate is exact when the accumulated carried
error is zero; otherwise, that error perturbs the estimate.

The incoming coordinate maps, accumulated errors, and carried
tests depend on the same original Gaussian draws.
Consequently, the conditional Gaussian argument used for
resketched recovery does not directly apply. \Cref{fig:multilevel-test-support} illustrates why a
finer-level recovery error is absent from a fresh resketched sample but
remains in a carried sketch.

\begin{remark}[Carried test matrices]
At coarser levels, the carried test blocks are obtained by
applying $\mathcal V_\sigma^*$ and $\mathcal U_\sigma^*$
to the original Gaussian test blocks $\mtx\Omega_\sigma$
and $\mtx\Psi_\sigma$, respectively.
These maps have orthonormal rows but depend on the original
tests, so the usual Gaussian invariance argument does not apply.
We therefore check the numerical row rank of each local test
matrix when computing its pseudoinverse. All checks passed
in our experiments. If a rank deficiency is detected, one can
resketch the affected level and resume carried recovery.
\label{remark:carried_rank}
\end{remark}

\begin{figure}[H]
    \centering
    \resizebox{0.98\linewidth}{!}{
        \begin{tikzpicture}[
    every node/.style={align=center},
    page/.style={
        draw=black,
        fill=white,
        line width=0.9pt,
        inner sep=0.22mm,
    },
]
    \definecolor{levelThreeRed}{HTML}{C94F5A}
    \definecolor{levelTwoGreen}{HTML}{91A68F}
    \definecolor{terminalBlue}{HTML}{8BC8D6}
    \definecolor{errorRed}{HTML}{9D3540}

\node[minimum width=40mm,minimum height=65mm,inner sep=0pt]
        (geom-frame) {};
    \node at (geom-frame.center) (geom)
        {\includegraphics[height=56mm]
        {recovery_multilevel_square_geometry_stack.pdf}};
    \node[font=\small,fill=white,inner sep=0.5pt,anchor=west]
        at ($(geom.center)+(6mm,-9mm)$)
        {geometric\\ hierarchy};
    \node[font=\small,anchor=east]
        at ($(geom.west)+(-1mm,18mm)$) {$\msf S_2$};
    \node[font=\small,anchor=east]
        at ($(geom.west)+(-1mm,0)$) {$(\msf S_2,\msf R_2)$};
    \node[font=\small,anchor=east]
        at ($(geom.west)+(-1mm,-18mm)$) {$(\msf S_3,\msf R_3)$};

\coordinate (right-top) at ($(geom-frame.east)+(15mm,17mm)$);
    \node[page,anchor=west] (Yresk) at (right-top)
        {\includegraphics[width=6mm,height=24mm]
        {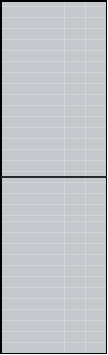}};
    \draw[draw=levelTwoGreen,line width=1.1pt]
        (Yresk.north west) rectangle
        ($(Yresk.north east)!0.5!(Yresk.south east)$);
    \node[right=2.5mm of Yresk,font=\normalsize] (equals-resk) {$=$};
    \node[page,right=2.5mm of equals-resk] (Cresk)
        {\includegraphics[width=24mm,height=24mm]
        {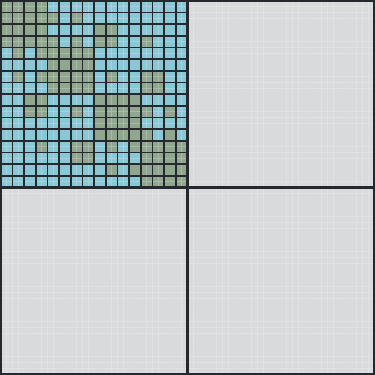}};
    \begin{scope}
        \clip
            (Cresk.north) rectangle (Cresk.east)
            (Cresk.west) rectangle (Cresk.south)
            (Cresk.center) rectangle (Cresk.south east);
        \node[inner sep=0pt,outer sep=0pt,opacity=0.34,
            blend mode=multiply] at (Cresk.center)
            {\includegraphics[width=24mm,height=24mm]
            {figures/recovery_multilevel_square_native_C3.pdf}};
    \end{scope}
    \node[font=\tiny,fill=white,fill opacity=0.88,
        text opacity=1,inner sep=0.25mm]
        at ($(Cresk.north west)!0.5!(Cresk.center)$)
        {$\mtx C_{\msf S_3,\msf S_3}$};
    \node[right=2.5mm of Cresk,font=\normalsize] (times-resk) {$\times$};
    \node[page,right=2.5mm of times-resk] (Oresk)
        {\includegraphics[width=6mm,height=24mm]
        {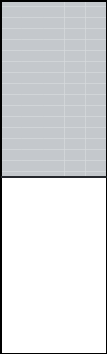}};
    \draw[draw=levelTwoGreen,line width=1.1pt]
        (Oresk.north west) rectangle
        ($(Oresk.north east)!0.5!(Oresk.south east)$);
    \node[font=\tiny,fill=white,fill opacity=0.88,
        text opacity=1,inner sep=0.2mm]
        at ($(Oresk.center)+(0,6mm)$)
        {$\widehat{\mtx\Omega}_{\msf S_3}$};
    \node[font=\scriptsize,fill=white,inner sep=0.35pt]
        at ($(Oresk.center)+(0,-6mm)$) {$\mtx 0$};
    \node[above=1.4mm of Cresk,font=\small]
        {(a) resketched recovery};
    \node[font=\small,anchor=east]
        at ($(Yresk.north west)!0.5!(Yresk.west)+(-1mm,0)$)
        {$\widehat{\mtx Y}_{\msf S_3}$};
    \node[font=\tiny,anchor=west]
        at ($(Oresk.east)+(0.7mm,6mm)$) {$\msf S_3$};
    \node[font=\tiny,anchor=west]
        at ($(Oresk.east)+(0.7mm,-6mm)$) {$\msf R_3$};

\node[page,anchor=west] (Ycar)
        at ($(Yresk.west)+(0,-36mm)$)
        {\includegraphics[width=6mm,height=24mm]
        {figures/recovery_level3_resketched_sketch.pdf}};
    \begin{scope}
        \clip ($(Ycar.north west)+(0.3mm,-0.3mm)$) rectangle
            ($(Ycar.east)+(-0.3mm,0.3mm)$);
        \node[inner sep=0pt,outer sep=0pt,opacity=0.72,
            blend mode=multiply]
            at ($(Ycar.north)!0.5!(Ycar.center)$)
            {\includegraphics[viewport=90 90 180 180,clip,
                width=5.4mm,height=11.4mm]
            {figures/recovery_multilevel_square_native_C3.pdf}};
    \end{scope}
    \draw[draw=levelTwoGreen,line width=1.1pt]
        (Ycar.north west) rectangle
        ($(Ycar.north east)!0.5!(Ycar.south east)$);
    \node[right=2.5mm of Ycar,font=\normalsize] (equals-car) {$=$};
    \node[page,right=2.5mm of equals-car] (Ccar)
        {\includegraphics[width=24mm,height=24mm]
        {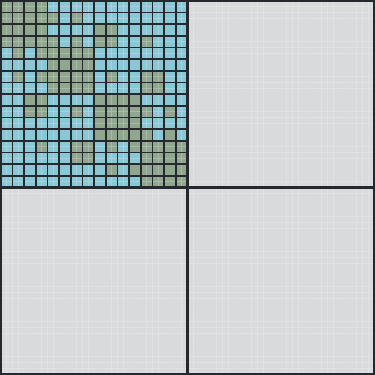}};
    \begin{scope}
        \clip
            (Ccar.north) rectangle (Ccar.east)
            (Ccar.west) rectangle (Ccar.south)
            (Ccar.center) rectangle (Ccar.south east);
        \node[inner sep=0pt,outer sep=0pt,opacity=0.34,
            blend mode=multiply] at (Ccar.center)
            {\includegraphics[width=24mm,height=24mm]
            {figures/recovery_multilevel_square_native_C3.pdf}};
    \end{scope}
    \node[font=\tiny,fill=white,fill opacity=0.88,
        text opacity=1,inner sep=0.25mm]
        at ($(Ccar.north west)!0.5!(Ccar.center)$)
        {$\mtx C_{\msf S_3,\msf S_3}$};
    \node[right=2.5mm of Ccar,font=\normalsize] (times-car) {$\times$};
    \node[page,right=2.5mm of times-car] (Ocar)
        {\includegraphics[width=6mm,height=24mm]
        {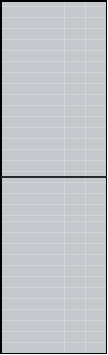}};
    \node[font=\tiny,fill=white,fill opacity=0.88,
        text opacity=1,inner sep=0.2mm]
        at ($(Ocar.center)+(0,6mm)$)
        {$\widehat{\mtx\Omega}_{\msf S_3}$};
    \node[font=\tiny,fill=white,fill opacity=0.88,
        text opacity=1,inner sep=0.2mm]
        at ($(Ocar.center)+(0,-6mm)$)
        {$\widehat{\mtx\Omega}_{\msf R_3}$};
    \node[above=1.4mm of Ccar,font=\small]
        {(b) carried recovery};
    \node[font=\small,anchor=east]
        at ($(Ycar.north west)!0.5!(Ycar.west)+(-1mm,0)$)
        {$\widehat{\mtx Y}_{\msf S_3}$};
    \node[font=\tiny,anchor=west]
        at ($(Ocar.east)+(0.7mm,6mm)$) {$\msf S_3$};
    \node[font=\tiny,anchor=west]
        at ($(Ocar.east)+(0.7mm,-6mm)$) {$\msf R_3$};

\end{tikzpicture}
     }
    \caption{After level-$3$ recovery, we sample the remaining
    matrix $\mtx C_{\msf S_3,\msf S_3}$ for recovery at level $2$.
    The resketched test has components only on the surviving
    coordinates $\msf S_3$. The carried test also has components
    on $\msf R_3$, allowing unrecovered level-$3$ errors
    (shown in faded red) to enter the sketch.}
    \label{fig:multilevel-test-support}
\end{figure}

\subsection{Query Counts and Complexity}
\label{subsec:recovery-complexity}

We count each right-hand-side column in an application of
$\mtx A$ or $\mtx A^*$ as one query.
At level $\ell$, all boxes share the same test columns, so
the sample width is determined by the largest local requirement.
Let $m_\sigma$ denote the number of incoming coordinates
for a box $\sigma$ at level $\ell$, and set
\[
    n_\tau:={\textstyle\sum}_{\sigma\in\near_\ell(\tau)}m_\sigma,
    \qquad
    s_\ell:=\max_{\tau\in\mathcal T_\ell}(n_\tau+k_\tau+p).
\]
This choice leaves at least $k_\tau+p$ null directions for
each processed box at level $\ell$.

Resketched recovery draws new tests at every level, while
carried recovery uses one initial sketch pair wide enough
for all levels.
Set $d_{\ell_0}:=|\msf S_{\ell_0}|$. 
With $s_\ell$ columns at level $\ell$, the forward-query
count for resketched recovery is
\begin{equation}
\label{eq:resketched-query-count}
    s_{\mathrm{resketch}}
    := \sum_{\ell=\ell_0}^{L}s_\ell+d_{\ell_0}.
\end{equation}
The final $d_{\ell_0}$-column application is needed only for $\mtx A$.
The adjoint-query count is $s_{\mathrm{resketch}}-d_{\ell_0}$.

Provided the local row-rank checks in \Cref{remark:carried_rank} pass, 
carried recovery uses the same query count for $\mtx A$
and $\mtx A^*$:
\begin{equation}
\label{eq:rank-query-count}
    s_{\mathrm{carried}}
    :=\max\left\{d_{\ell_0}+p,
       \max_{\ell_0\leq\ell\leq L}s_\ell\right\}.
\end{equation}
The term $d_{\ell_0}+p$ provides oversampling for recovery
of the terminal coefficient matrix.

To bound the sample widths, let $k:=\max_{\ell,\tau\in\mathcal T_\ell}k_\tau$.
Assume there are
at most $c_{\mathrm{leaf}}k$ coordinates in a leaf.
A nonleaf receives at most $k$ surviving coordinates from
each child, so at most $bk$ if it has at most $b$ children.
Thus every box $\sigma\in\mathcal B_\ell$, including a
coarser leaf, satisfies
$m_\sigma\le\max\{c_{\mathrm{leaf}},b\}k$.
Let $\nu$ bound $|\near_\ell(\tau)|$ for every processed box
$\tau\in\mathcal T_\ell$, including $\tau$ itself.
These neighbors belong to $\mathcal B_\ell$: they are
level-$\ell$ boxes or coarser leaves. Therefore $n_\tau\le\nu\max\{c_{\mathrm{leaf}},b\}k$, and
\begin{equation}
    s_\ell\le
    c_{\rm geom}\ k+p, \qquad c_{\rm geom}:= \nu\max\{c_{\mathrm{leaf}},b\}+1.
\end{equation}

For fixed geometric
constants and $\ell_0$, $d_{\ell_0}=O(k)$. 
Combining \cref{eq:resketched-query-count,eq:rank-query-count} with
$L=O(\log(N/k))$ and $p=O(k)$ gives the total query complexity
\[
\begin{aligned}
\text{carried:}\quad
    &O(k+p)\\
\text{resketched:}\quad
    &O\bigl(L(k+p)\bigr)=O\bigl(k\log(N/k)\bigr).
\end{aligned}
\]
If the tree has $O(N/k)$ boxes, the representation uses $O(Nk)$ storage and applies in
$O(Nk)$ work per right-hand side.  Excluding applications of $\mtx A$ and
$\mtx A^*$, the postprocessing costs are
\[
\begin{aligned}
\text{carried:}\quad
    &O\bigl(N(k+p)^2\bigr)
      =O(Nk^2),\\
\text{resketched:}\quad
    &O\bigl(N(k+p)^2+Nk(k+p)L\bigr)
      =O\bigl(Nk^2\log(N/k)\bigr).
\end{aligned}
\]
The second
term in resketched recovery is the cost of mapping fresh
test matrices to physical coordinates and the
resulting sketches back at every level.

\begin{remark}[Geometry-dependent query counts]
\label{rem:geometry-query-counts}
For a quadtree ($d=2$) or octree ($d=3$), assume leaf size
at most $2^d k$ and $2{:}1$ balance between adjacent leaves.
Because the number of near neighbors in $\mathcal B_\ell$ is bounded by $3^d$,
one may take $c_{\rm {geom}}=6^d+1$.
Actual neighborhood sizes can give smaller sample widths,
particularly for lower-dimensional point sets. The wiggly-torus experiment in
\cref{subsec:exp-wtorus} uses approximately $120k+p$ samples rather than the
three-dimensional bound $217k+p$.  For weak admissibility on a binary tree with leaf size
at most $2k$, one may take $c_{\mathrm{geom}}=3$,
without a balance assumption.
\end{remark}

\subsection{Resketched Recovery Guarantee}
\label{subsec:sample-planning-guarantee}

For a prescribed tree $\mathcal T$, rank bounds $(k_\tau)$,
and neighbor relations $\mathcal N=(\near_\ell)$, let
$\mathcal H^2$ denote the class of matrices represented exactly
by the right-hand side of \eqref{eq:levelwise-expansion},
with nested orthonormal box bases of ranks at most $k_\tau$
and arbitrary coefficients.
The best approximation error in this class is
\begin{equation}
\label{eq:projected-residual-comparison-class}
    \mathrm{OPT}
    :=\inf_{\overline{\mtx A}
        \in\mathcal H^2}
        \normfro{\mtx A-\overline{\mtx A}}.
\end{equation}
For $p\geq2$, collect the one-level recovery factors into
\begin{equation}
\label{eq:multilevel-local-beta}
    \beta
    :=\max_{\ell_0\leq\ell\leq L}
      \max_{\tau\in\mathcal T_\ell}
      \alpha_{k_\tau,p}
      \sqrt{1+{n_\tau}/{(k_\tau+p-1)}}.
\end{equation}
Here $\alpha_{k_\tau,p}$ is defined in
\cref{eq:appendix-range-alpha}. By \Cref{lem:one-level-recovery},
$\beta$ bounds the one-level factor for every box and level.

\begin{theorem}[Quasi-optimal resketched recovery]
\label{thm:resketched-quasioptimality}
Suppose $p\geq2$.
The resketched algorithm of \Cref{subsec:resketched-recovery} produces an approximation
$\widetilde{\mtx A}$ satisfying
\begin{equation}
\label{eq:resketched-error-bound}
    \E\normfro{\mtx A-\widetilde{\mtx A}}
    \leq
    \sqrt{2(L-\ell_0+1)}\,\beta\,\mathrm{OPT}.
\end{equation}
The numbers of queries to $\mtx A$ and $\mtx A^*$ are given in
\Cref{subsec:recovery-complexity}.
\end{theorem}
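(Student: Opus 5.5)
The plan follows the single-level-to-global argument of Amsel et al.~\cite{amsel2026quasioptimal}, using \Cref{lem:one-level-recovery} at each level and \Cref{lem:orthogonality-levels} to add the levelwise errors in quadrature. The error decomposes as
\[
\mtx A-\widetilde{\mtx A}=\sum_{\ell=\ell_0}^{L}\bigl(\mtx D_\ell-\widetilde{\mtx D}_\ell\bigr)=\sum_{\ell=\ell_0}^L\mtx E_\ell .
\]
This holds because resketched recovery recovers $\mtx C_{\msf S_{\ell_0},\msf S_{\ell_0}}$ exactly, so $\widetilde{\mtx W}_{\ell_0}=\mtx W_{\ell_0}$. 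Here $\mtx D_\ell$ and $\mtx W_\ell$ are built from the recovered bases.

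Each $\mtx E_\ell$ has the form $[\mathcal U_{\msf S_\ell}\ \mathcal U_{\msf R_\ell}](\cdot)[\mathcal V_{\msf S_\ell}\ \mathcal V_{\msf R_\ell}]^*$ with zero $\msf{SS}$ block. By the second part of \Cref{lem:orthogonality-levels} the $\mtx E_\ell$ are then mutually Frobenius-orthogonal, so
\[
\normfro{\mtx A-\widetilde{\mtx A}}^2=\sum_\ell\normfro{\mtx E_\ell}^2 .
\]
Because $\mathcal U_{\msf S_{\ell+1}},\mathcal V_{\msf S_{\ell+1}}$ have orthonormal columns, $\normfro{\mtx E_\ell}$ equals the one-level error of recovering the incoming matrix $\mtx C^{(\ell)}:=\mtx C_{\msf S_{\ell+1},\msf S_{\ell+1}}$. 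That error is measured against the one-level approximation completed with its exact $\msf{SS}$ block, exactly as in \Cref{lem:one-level-recovery}.

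Next I would condition on the bases recovered at levels $L,\dots,\ell+1$, i.e.\ on the $\sigma$-algebra $\mathcal F_{\ell+1}$. Given $\mathcal F_{\ell+1}$, the fresh tests at level $\ell$ are independent standard Gaussians in the incoming coordinates, and the sketches \eqref{eq:incoming-matrix-sketches} are exact. \Cref{lem:one-level-recovery}, applied with the bound $\max_\tau\beta_\tau\le\beta$, then gives
\[
\E\bigl[\normfro{\mtx E_\ell}^2\mid\mathcal F_{\ell+1}\bigr]\le 2\beta^2\operatorname{opt}(\mtx C^{(\ell)})^2 .
\]
This step needs a second-moment version of \Cref{lem:one-level-recovery}. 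I expect the appendix proof to bound $\E\normfro{\cdot}^2$ and obtain the first-moment statement by Jensen, so I would quote or re-derive it in that form. Taking expectations and applying Jensen once more yields
\[
\E\normfro{\mtx A-\widetilde{\mtx A}}\le\sqrt{2}\,\beta\Bigl(\sum_\ell\E\operatorname{opt}(\mtx C^{(\ell)})^2\Bigr)^{1/2}.
\]

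It remains to show $\operatorname{opt}(\mtx C^{(\ell)})\le\mathrm{OPT}$ for every realization of the finer bases. This is where I expect the main obstacle. The natural attempt is to take a near-optimal $\overline{\mtx A}\in\mathcal H^2$ and compress it to $\mathcal U_{\msf S_{\ell+1}}^*\overline{\mtx A}\mathcal V_{\msf S_{\ell+1}}$. Its level-$\ell$ bases are obtained by projecting $\overline{\mtx A}$'s physical level-$\ell$ bases onto $\range(\mathcal U_{\msf S_{\ell+1}})$, which keeps dimension at most $k_\tau$. One then checks that the compressed matrix lies in the one-level class $\mathcal M$ for the level-$\ell$ partition and neighbor relation. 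Two facts should drive that check. First, in $\overline{\mtx A}$ the interaction between well-separated level-$\ell$ boxes factors through those boxes' level-$\ell$ basis spaces, since it is represented at level $\ell$ or coarser through nested bases. Second, all finer-level $\msf R$-containing terms are supported on neighboring pairs at level $\ell$; here the $2{:}1$ balance and the rule that children of non-neighbors are non-neighbors are used. Compression by orthonormal maps does not increase the Frobenius distance, so
\[
\operatorname{opt}(\mtx C^{(\ell)})\le\normfro{\mtx C^{(\ell)}-\mathcal U_{\msf S_{\ell+1}}^*\overline{\mtx A}\mathcal V_{\msf S_{\ell+1}}}\le\normfro{\mtx A-\overline{\mtx A}} .
\]
The delicate part is verifying the support claim for adaptive trees with coarser-leaf neighbors. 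I would handle it boxwise, using \eqref{eq:adaptive-one-sided-coefficients}.

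Summing over the $L-\ell_0+1$ levels then gives \eqref{eq:resketched-error-bound}. The query counts were established in \Cref{subsec:recovery-complexity}.
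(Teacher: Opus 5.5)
Your proposal is correct and follows the paper's proof essentially step for step. The shared steps are the telescoping identity with the exactly recovered terminal block, cross-level orthogonality of the $\mtx E_\ell$, the conditional squared estimate $\E\normfro{\mtx E_\ell}^2\le 2\beta^2\operatorname{opt}_\ell^2$ taken from the proof of \Cref{lem:one-level-recovery}, the compressed competitor $\mathcal U_{\msf S_{\ell+1}}^*\overline{\mtx A}\mathcal V_{\msf S_{\ell+1}}$ giving $\operatorname{opt}_\ell\le\mathrm{OPT}$ for every realization, and a final application of Jensen. The paper asserts in one sentence that this competitor lies in the one-level class, on the grounds that boxwise transforms preserve neighbor support and far-field rank bounds; your check via nested near masks and nested bases spells out that same step rather than taking a different route.
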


The proof is in \Cref{app:multilevel-proof}.

\begin{corollary}[Weak admissibility]
\label{cor:weak-admissibility-recovery}
Consider a binary tree.  Take $\ell_0=1$ and
$\near_\ell(\tau)=\{\tau\}$, and assume $L=O(\log(N/k))$.  In the
uniform-rank case, leaf dimensions at most $2k$ give
\[
    \E\normfro{\mtx A-\widetilde{\mtx A}}
    \leq
    \sqrt{2L}\,\alpha_{k,p}
    \sqrt{1+{2k}/{(k+p-1)}}\,\mathrm{OPT}.
\]
Here $\mathrm{OPT}$ uses the corresponding weak-admissibility representation.
The query counts are at most $L(3k+p)+2k$ for $\mtx A$ and $L(3k+p)$ for
$\mtx A^*$.
With $p=O(k)$, both counts are $O(k\log(N/k))$.
\end{corollary}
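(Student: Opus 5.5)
The corollary is a direct specialization of \Cref{thm:resketched-quasioptimality} together with the query counts \cref{eq:resketched-query-count}, so I would not redo any probabilistic analysis. The first step is to check that the setting fits the theorem. The weak-admissibility structure is the levelwise expansion \eqref{eq:levelwise-expansion} with $\near_\ell(\tau)=\{\tau\}$ and $\ell_0=1$. Nothing in the one-level estimate (\Cref{lem:one-level-recovery}) or in the cross-level orthogonality (\Cref{lem:orthogonality-levels}) uses the specific form of the neighbor relation. The theorem is therefore stated for an arbitrary prescribed $\mathcal N$, and $\mathrm{OPT}$ becomes the best error in the corresponding weak-admissibility class. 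The $2{:}1$ balance assumption is used only to bound neighbor counts, and here every neighborhood is a singleton, so it can be dropped. The number of processed levels is $L-\ell_0+1=L$, which turns the prefactor $\sqrt{2(L-\ell_0+1)}$ into $\sqrt{2L}$.

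The second step is to bound $\beta$ in \eqref{eq:multilevel-local-beta}. Since $\near_\ell(\tau)=\{\tau\}$, we have $n_\tau=m_\tau$, the number of incoming coordinates of $\tau$. There are two cases:
\begin{itemize}
\item if $\tau$ is a leaf, then $m_\tau\le 2k$ by the assumed leaf dimension;
\item if $\tau$ is a nonleaf, it has at most two children, each contributing at most $k$ surviving coordinates, so $m_\tau\le 2k$.
\end{itemize}
In the uniform-rank case $k_\tau=k$, the factor $\alpha_{k_\tau,p}=\alpha_{k,p}$ does not depend on $\tau$. The factor $\sqrt{1+n/(k+p-1)}$ is increasing in $n$, so $\beta\le\alpha_{k,p}\sqrt{1+2k/(k+p-1)}$. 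Inserting this into \eqref{eq:resketched-error-bound} gives the stated error bound.

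The third step is the query count. At each level $\ell$, $s_\ell=\max_\tau(m_\tau+k+p)\le 3k+p$, so summing over the $L$ processed levels gives at most $L(3k+p)$ adjoint queries. The forward count adds $d_{\ell_0}=|\msf S_1|$. Level $1$ consists of the (at most) two children of the root; these are all processed boxes, so $\mathcal B_1=\mathcal T_1$, and each keeps at most $k$ surviving coordinates. Hence $d_{\ell_0}\le 2k$ and the forward count is at most $L(3k+p)+2k$. With $p=O(k)$ and $L=O(\log(N/k))$, both counts are $O(k\log(N/k))$.

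The only point needing care is the first step: confirming that the proof of \Cref{thm:resketched-quasioptimality} is genuinely independent of the neighbor relation and of the choice $\ell_0=2$ made for strong admissibility. If the appendix proof invokes $\mathcal B_{\ell_0}=\mathcal T_{\ell_0}$, this holds trivially at level $1$, as noted above.
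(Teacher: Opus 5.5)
Your proposal is correct and matches the paper's own route. The paper gives no separate proof: the corollary is \Cref{thm:resketched-quasioptimality} specialized to $L-\ell_0+1=L$ processed levels with $n_\tau=m_\tau\le 2k$. This is the weak-admissibility case $c_{\mathrm{geom}}=3$ of \Cref{rem:geometry-query-counts}, which needs no balance assumption. It is combined with $s_\ell\le 3k+p$ and $d_{\ell_0}=|\msf S_1|\le 2k$ in \eqref{eq:resketched-query-count}, which is exactly your reasoning, including the checks that $\mathcal B_1=\mathcal T_1$ and that $\sqrt{1+n/(k+p-1)}$ is monotone in $n$.
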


\begin{remark}[Uniform bound on $\beta$]
Suppose the rank is at most $k$. Set $p-1=\gamma k$ with fixed $\gamma>0$. 
Then $\alpha_{k,p}$ is bounded
independently of $k$, and
\[
    \beta\leq
    \alpha_{k,p}\sqrt{1+(c_{\rm geom}-1)/{(1+\gamma)}}.
\]
For fixed $c_{\mathrm{geom}}$ and $\gamma$, $\beta$ is
bounded independently of $N$ and $k$.
For trees of logarithmic depth, the factor multiplying $\mathrm{OPT}$
in \cref{eq:resketched-error-bound} is therefore
$O(\sqrt{\log(N/k)})$.
\end{remark}

 \section{Numerical Experiments}
\label{sec:experiments}

We compare carried and resketched recovery on three discretized
operators to assess how reusing sketches across levels affects
accuracy and reduces operator queries.
The three test problems differ in how products with $\mtx A$ and
$\mtx A^*$ are obtained. The first is an explicit Laplace kernel matrix, for which
inexpensive operator applications permit controlled studies of oversampling
and admissibility.  The second is a matrix-free Dirichlet-to-Neumann map,
where each operator application requires two interior volume solves.  The
third is an oscillatory Helmholtz boundary-integral operator on a curved
surface, applied through a fast multipole method.

For carried recovery, we report its
reconstruction time $T_{\mathrm{rec}}$, compressed application time per
right-hand side $T_{\mathrm{apply}}$, and stored-representation size $M$.
Reconstruction timings exclude sketch generation and
applications of $\mtx A$ and $\mtx A^*$. 
All timings were obtained on Cardinal at the Ohio
Supercomputer Center using Intel Xeon CPU Max 9470 nodes
(2.0\,GHz), with numerical-library thread pools limited to one thread.

For both recovery methods, we estimate relative Frobenius error using
\[
    \operatorname{relerr}_F
    :=
    {\|(\mtx A-\widetilde{\mtx A})\mtx G\|_F}\ /\ 
         {\|\mtx A\mtx G\|_F},
\]
where $\mtx G$ has 20 Gaussian columns independent of the recovery sketches.
Accuracy results use a single randomized realization with
prescribed random seeds.

In the numerical comparisons, resketched recovery uses
$s_{\mathrm{carried}}$ columns at every processed level.  Its forward-query
count is therefore
\[
    s_{\mathrm {resketch}} = (L-\ell_0+1)s_{\mathrm{carried}}+d_{\ell_0}.
\]
In \cref{fig:biegrid-main,fig:dtn-main,fig:wtorus-main},
colored annotations give storage in GB in the application-time
panels and forward-query counts in the error panels
(upright for carried, italic for resketched recovery).
The reconstruction panels list the maximum carried query count
over problem sizes for each rank.
All query counts exclude the independent validation probes.
Each reported $\beta$ is the maximum over the corresponding
ranks and problem sizes.

\subsection{Laplace Kernel on a Square Grid}
\label{subsec:exp-biegrid}

For points $x_i$ on a uniform grid in the unit square,
the Laplace kernel matrix has off-diagonal entries
$$\mtx A_{ij} = -{(2 \pi N)}^{-1} \log \|x_i - x_j\|.$$
The diagonal entries
are local cell integrals evaluated using QUADPACK
\cite{piessens1983quadpack}. Operator products are evaluated using
FFTs. The strong-admissibility experiments use $k=10,20,30$,
$p=4k$, and a maximum leaf size of $4k$.

As $N$ grows from approximately 20 thousand to 1.28 million,
carried reconstruction time, compressed application time,
and storage scale approximately linearly
(\cref{fig:biegrid-main}).
At $k=20$, carried recovery uses 820 forward queries
throughout, while its error increases from $2.38\times10^{-7}$
to $4.65\times10^{-7}$. On the largest grid, resketched
recovery attains an error of $1.72\times10^{-7}$ using
6060 forward queries. The query savings from carried
recovery grow as the hierarchy deepens.

\begin{figure}[t]
\centering
\includegraphics[width=\linewidth]{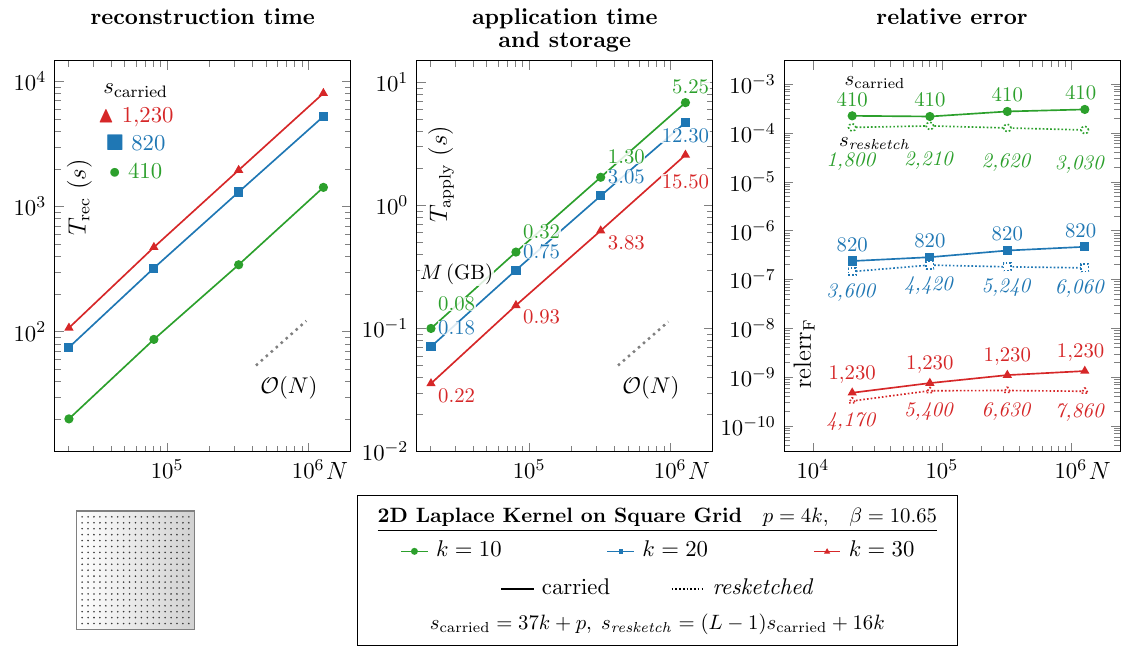}
\caption{Laplace kernel recovery with strong admissibility,
 for $k=10,20,30$ and $p=4k$. The first two panels show carried reconstruction and
compressed application timings; the third compares carried (solid) and resketched
(dotted) errors.}
\label{fig:biegrid-main}
\end{figure}

\begin{samepage}
\subsubsection{Effect of Oversampling}

We next vary the oversampling parameter $p$ at fixed rank
$k=20$ and maximum leaf size $4k$
(\cref{fig:biegrid-beta-sweep}), beginning with resketched recovery.
Increasing $p$
from 10 to 160 reduces $\beta$ from 31.94 to 7.59,
a factor of 4.2. At each fixed problem size, the approximation
class and $\mathrm{OPT}$ remain unchanged, so the theoretical
upper bound on the expected error decreases by the same factor.
The observed resketched error decreases by factors of 3.3--4.1
across the tested sizes, 
consistent with the bound in \Cref{thm:resketched-quasioptimality}.
For carried recovery, larger $p$ reduces the observed error
growth with $N$ and brings its accuracy closer to that of
resketched recovery.

\end{samepage}

\begin{figure}[!htb]
\centering
\includegraphics[width=0.96\linewidth]{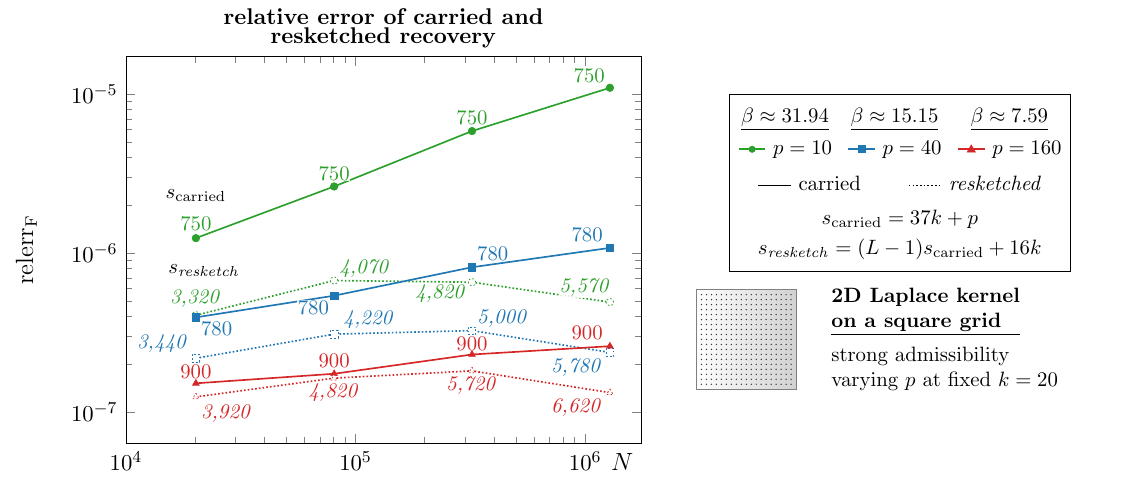}
\caption{Effect of oversampling on Laplace kernel recovery at $k=20$. 
Colors distinguish
\(p=10,40,160\); solid and dotted curves denote carried and resketched recovery.}
\label{fig:biegrid-beta-sweep}
\end{figure}

\subsubsection{Comparison to Weak Admissibility}

We compare weak and strong admissibility at two fixed sampling
budgets, $s=410$ and $s=820$, using the same quadtree within
each comparison. For these trees, the carried sample counts
are $s=5k+p$ for weak admissibility and $s=37k+p$ for strong
admissibility. With the same oversampling parameter $p$,
weak admissibility therefore permits a much larger
rank at each budget.

Weakly admissible bases must also approximate interactions
with neighboring boxes, so fixed-rank recovery can lose
accuracy as the grid is refined, as observed at the larger
sampling budget. Despite its larger geometric sampling
constant, strongly admissible recovery gives smaller errors
at both budgets (\cref{fig:biegrid-weak-strong}), with errors
that remain nearly independent of problem size.

\begin{figure}[!htb]
\centering
\includegraphics[width=\linewidth]{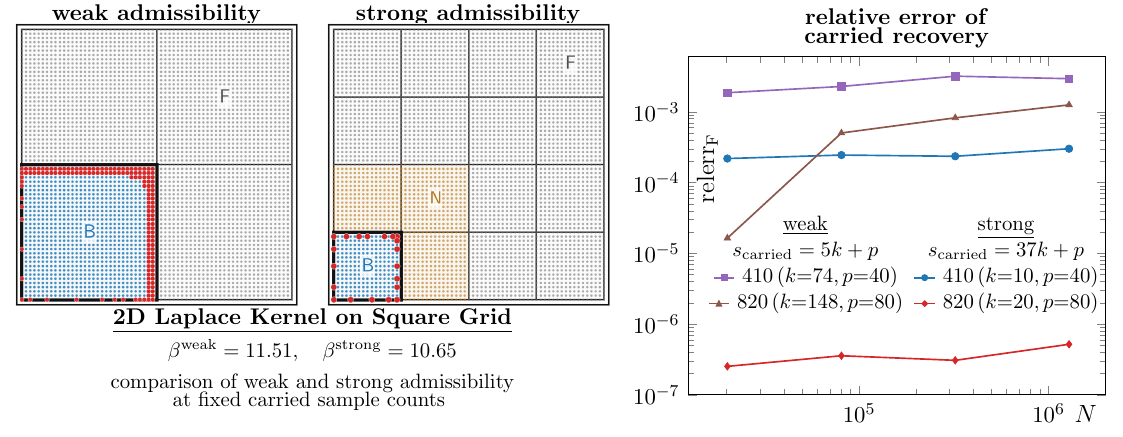}
\vspace{-0.20em}
\caption{Laplace kernel weak and strong carried recovery at matched
query budgets. Left: far fields and representative points;
right: recovery errors. Here $\mathsf B$, $\mathsf N$, and
$\mathsf F$ denote the box, its near neighbors, and its far field.}
\label{fig:biegrid-weak-strong}
\end{figure}

\subsection{Dirichlet-to-Neumann (DtN) Map}
\label{subsec:exp-dtn}

We consider a diffusion--reaction problem in two subdomains
with different diffusion coefficients. After discretization,
eliminating the interior unknowns leaves a system on their
shared interface. We recover the resulting interface operator
from matrix--vector products, each of which requires an
interior solve in both subdomains.

The domain is the cube $\Omega=[-1,1]^3$, divided by the curved interface
\[
    \Gamma
    =
    \{(x,y,z)\in\Omega:
      x=\gamma(y,z)\},
\]
where $\gamma(y,z)= 0.16\ \sin(1.35\pi y)\cos(1.15\pi z)$.
The subdomains $\Omega_1$ and $\Omega_2$ lie on either side
of $\Gamma$. We discretize the diffusion--reaction equation
\[
    -\nabla\cdot(a\nabla u)+0.25u=f,
    \qquad
    a|_{\Omega_1}=1,\quad a|_{\Omega_2}=4,
\]
using quadratic DOLFINx elements on a conforming mapped
tetrahedral mesh.

Let $I$ collect the interior unknowns of both subdomains, and define
$\mtx K_{II}=\operatorname{diag}(\mtx K_{11},\mtx K_{22})$.
Block elimination gives
\[
\begin{bmatrix}
    \mtx K_{II} & \mtx K_{I\Gamma}\\
    \mtx K_{\Gamma I} & \mtx K_{\Gamma\Gamma}
\end{bmatrix}
=
\begin{bmatrix}
    \mtx I & \mtx 0\\
    \mtx K_{\Gamma I}\mtx K_{II}^{-1} & \mtx I
\end{bmatrix}
\begin{bmatrix}
    \mtx K_{II} & \mtx 0\\
    \mtx 0 & \mtx S_\Gamma
\end{bmatrix}
\begin{bmatrix}
    \mtx I & \mtx K_{II}^{-1}\mtx K_{I\Gamma}\\
    \mtx 0 & \mtx I
\end{bmatrix}
\]
where
\[
    \mtx A := \mtx S_\Gamma
    =\mtx K_{\Gamma\Gamma}
     -\mtx K_{\Gamma I}\mtx K_{II}^{-1}\mtx K_{I\Gamma}.
\]
The discrete DtN map $\mtx S_\Gamma$ maps interface
Dirichlet data on $\Gamma$ to the sum of the outward fluxes from the two
subdomains.
The map is defined for $f=0$ and homogeneous outer Dirichlet
conditions. Body forcing and nonzero outer boundary data
enter the reduced right-hand side.
For this elliptic problem,
interactions between well-separated portions of the interface
are numerically low rank.

The dense interface operator can be applied through sparse
matrix products and interior solves, without forming its entries.
Interface Schur
complements of this type arise in structured multifrontal and slab-based
sparse direct solvers
\cite{xia2010superfast,xia2013randomized,yesypenko2024slablu}.
The largest DtN matrix has 1.28 million interface unknowns
and arises from a volume discretization with 42.4 million
unknowns. The sparse interior factorization was distributed
across four compute nodes.
Once constructed, the compressed operator can be applied
without further volume solves.

The randomized recovery experiments use ranks \(k=5,10,15\),
oversampling \(p=5k\), and maximum leaf size \(6k\).  Across four matrix dimensions from 20 thousand to
1.28 million unknowns, the error decreases with rank and remains nearly
independent of size.  On the largest interface at $k=15$, carried recovery uses
1175 forward queries, compared with 7290 for resketched
recovery, with errors of $2.40\times10^{-6}$ and
$1.45\times10^{-6}$, respectively (\cref{fig:dtn-main}).

\begin{figure}[t]
\centering
\begin{subfigure}[t]{0.68\linewidth}
\vspace{0pt}
\centering
\includegraphics[width=\linewidth]{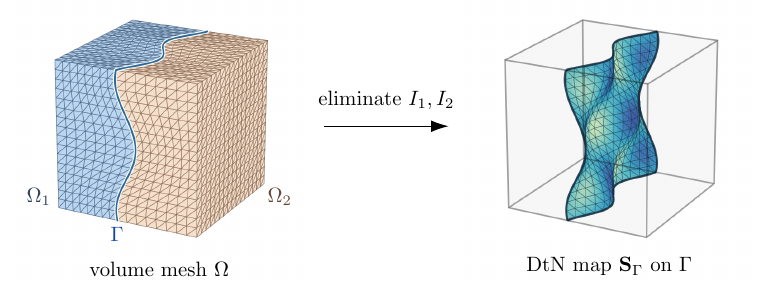}
\caption{Volume mesh and the resulting DtN map.}
\label{fig:dtn-elimination-geometry-panel}
\end{subfigure}\hfill
\begin{subfigure}[t]{0.28\linewidth}
\vspace{2.55em}
\centering
\scriptsize
\renewcommand{\arraystretch}{1.08}
\renewcommand{\tabcolsep}{2.6pt}
\centering
\begin{tabular}{@{}rrr@{}}
\toprule
$N_\Gamma$ & $N_\Omega$ & PDE err. \\
\midrule
20K & 675K & $3.49\times 10^{-6}$ \\
80K & 2.64M & $4.66\times 10^{-7}$ \\
320K & 10.6M & $6.67\times 10^{-8}$ \\
1.28M & 42.4M & $1.18\times 10^{-8}$ \\
\bottomrule
\end{tabular} \vspace{1.05em}
\caption{Validation.}
\label{fig:dtn-volume-validation}
\end{subfigure}
\caption{DtN map construction and validation:
(a) volume elimination forming $\mtx S_\Gamma$;
(b) relative $L^2(\Omega)$ error of the full-volume
manufactured-solution solve.}
\label{fig:dtn-elimination-geometry}
\end{figure}

\begin{figure}[t]
\centering
\includegraphics[width=\linewidth]{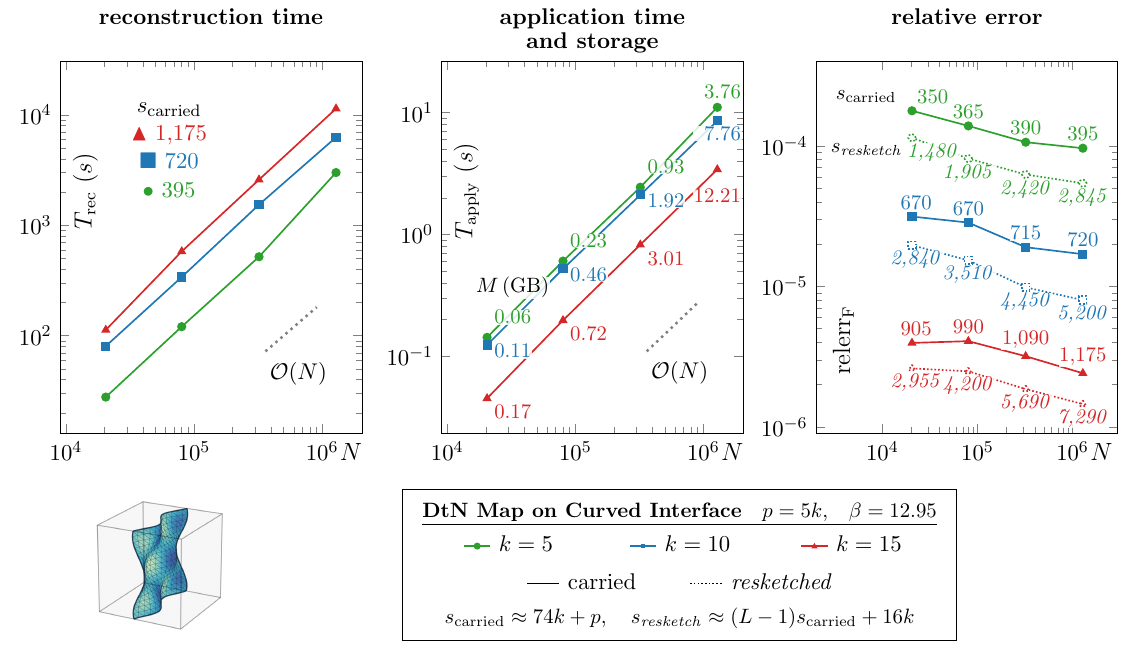}
\caption{DtN map recovery with $k=5,10,15$ and $p=5k$.
Timings and storage use carried recovery; errors compare
carried (solid) and resketched (dotted) recovery.}
\label{fig:dtn-main}
\end{figure}

The volume discretization was validated using the manufactured solution
\[
    u_\xi(x,y,z) =
    {\left(x-\gamma(y,z)\right)}\ /\ {a_\xi},
    \qquad \xi\in\{1,2\},
\]
with corresponding forcing and boundary data. 
This choice preserves continuity of the solution and normal
flux across $\Gamma$ despite the jump in diffusion coefficient.
Multigrid solves yield the
relative $L^2(\Omega)$ errors in \Cref{fig:dtn-elimination-geometry}(b),
with observed convergence orders of approximately 2.5--2.9, which is
consistent with the expected rate for quadratic elements.

\subsection{Helmholtz Boundary Integral Equation (BIE) on a Wiggly Torus}
\label{subsec:exp-wtorus}

\begin{figure}[!t]
\centering
\begin{subfigure}[t]{0.64\linewidth}
\vspace{0pt}
\centering
\begin{minipage}[t]{0.49\linewidth}
\vspace{0pt}
\centering
\IfFileExists{figures/wtorus_sigma_ring6_kappa25p1.png}{\begin{tikzpicture}
      \node[inner sep=0pt] (img) {\includegraphics[width=\linewidth]{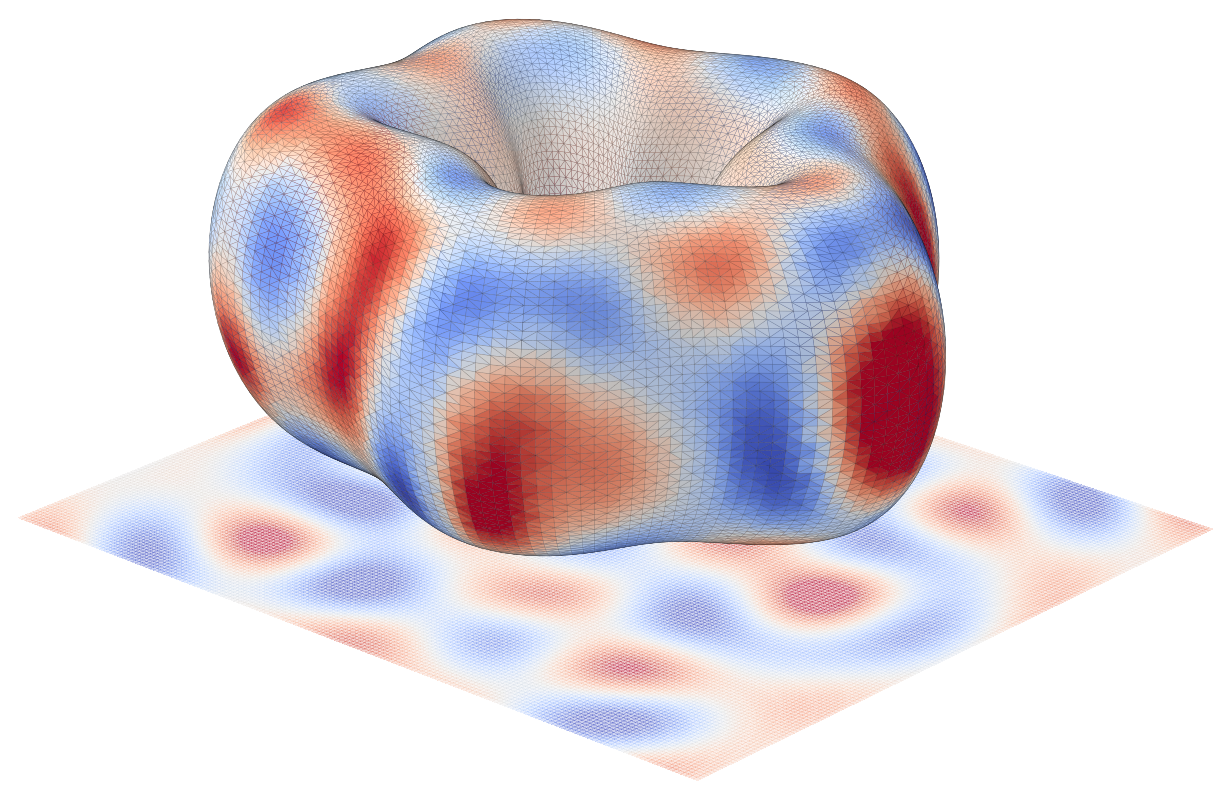}};
      \node[anchor=south west,font=\small,fill=white,fill opacity=0.76,text opacity=1,inner sep=1.0pt]
        at ([xshift=2.0pt,yshift=2.0pt]img.south west) {\(\kappa = 25.1\)};
    \end{tikzpicture}}{\fbox{\parbox[c][0.20\textheight][c]{0.90\linewidth}{\centering
    ring6 \(\kappa = 25.1\) density figure pending}}}
\end{minipage}\hfill
\begin{minipage}[t]{0.49\linewidth}
\vspace{0pt}
\centering
\IfFileExists{figures/wtorus_sigma_ring6_kappa50p3.png}{\begin{tikzpicture}
      \node[inner sep=0pt] (img) {\includegraphics[width=\linewidth]{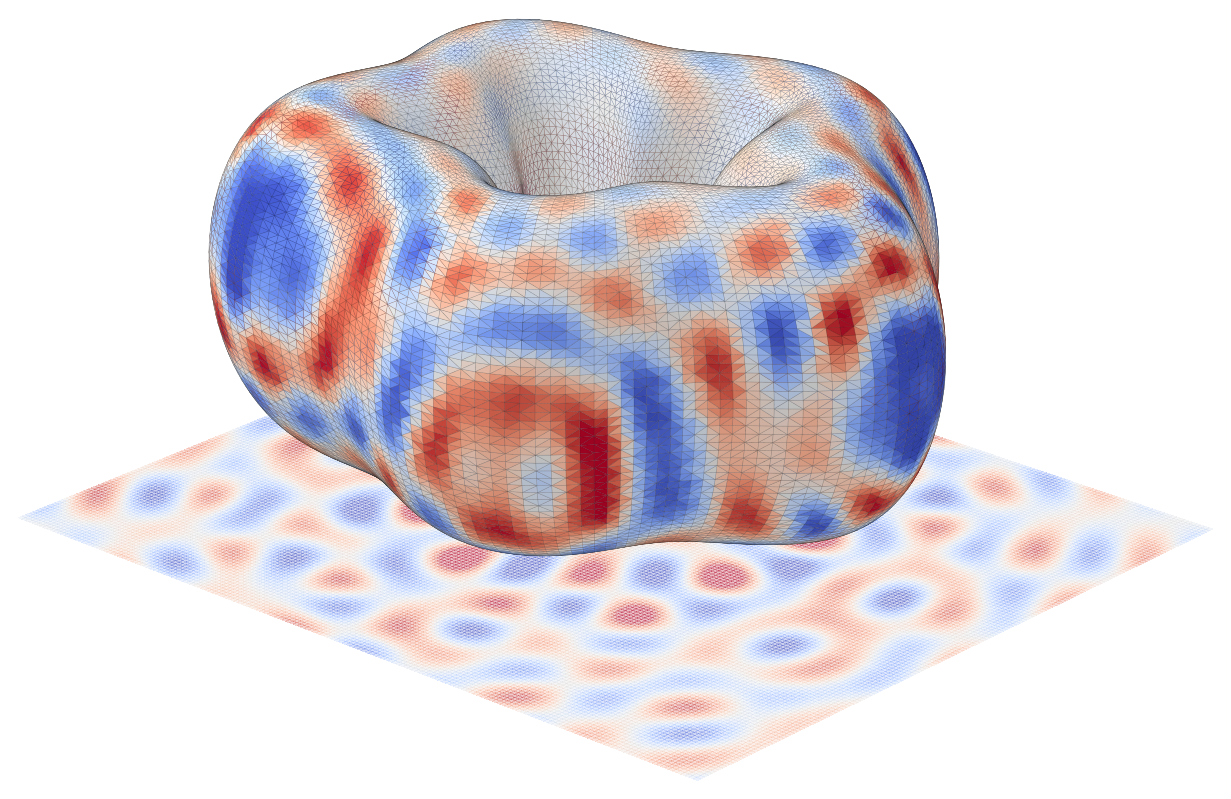}};
      \node[anchor=south west,font=\small,fill=white,fill opacity=0.76,text opacity=1,inner sep=1.0pt]
        at ([xshift=2.0pt,yshift=2.0pt]img.south west) {\(\kappa = 50.3\)};
    \end{tikzpicture}}{\fbox{\parbox[c][0.20\textheight][c]{0.90\linewidth}{\centering
    ring6 \(\kappa = 50.3\) density figure pending}}}
\end{minipage}
\caption{Surface-density renderings.}
\end{subfigure}\hfill
\begin{subfigure}[t]{0.32\linewidth}
\vspace{1.8em}
\centering
\scriptsize
\renewcommand{\arraystretch}{1.08}
\renewcommand{\tabcolsep}{2.6pt}
\centering
\begin{tabular}{@{}r r r@{}}
\toprule
$N$ & ${\kappa=25.1}$ & ${\kappa=50.3}$ \\
\midrule
20K & $8.78\times 10^{-4}$ & $3.55\times 10^{-3}$ \\
80K & $9.72\times 10^{-5}$ & $4.33\times 10^{-5}$ \\
320K & $1.29\times 10^{-5}$ & $4.70\times 10^{-6}$ \\
1.28M & $1.77\times 10^{-6}$ & $6.15\times 10^{-7}$ \\
\bottomrule
\end{tabular} \caption{Validation.}
\end{subfigure}
\caption{Helmholtz BIE on a wiggly torus. The wavenumbers $\kappa=25.1$ and $50.3$ correspond to
approximately four and eight wavelengths across the
largest bounding box dimension, respectively. (a) Six-source illustration: surface colors show
$\operatorname{Re}\sigma$, and the plane shows the real part
of the layer potential evaluated outside $\Omega$,
with separate color scales.
(b) Relative target-point errors for two-source
manufactured solutions.}
\label{fig:wtorus-geometry}
\end{figure}

\begin{figure}[!t]
\centering
\vspace{-3pt}
\includegraphics[width=\linewidth]{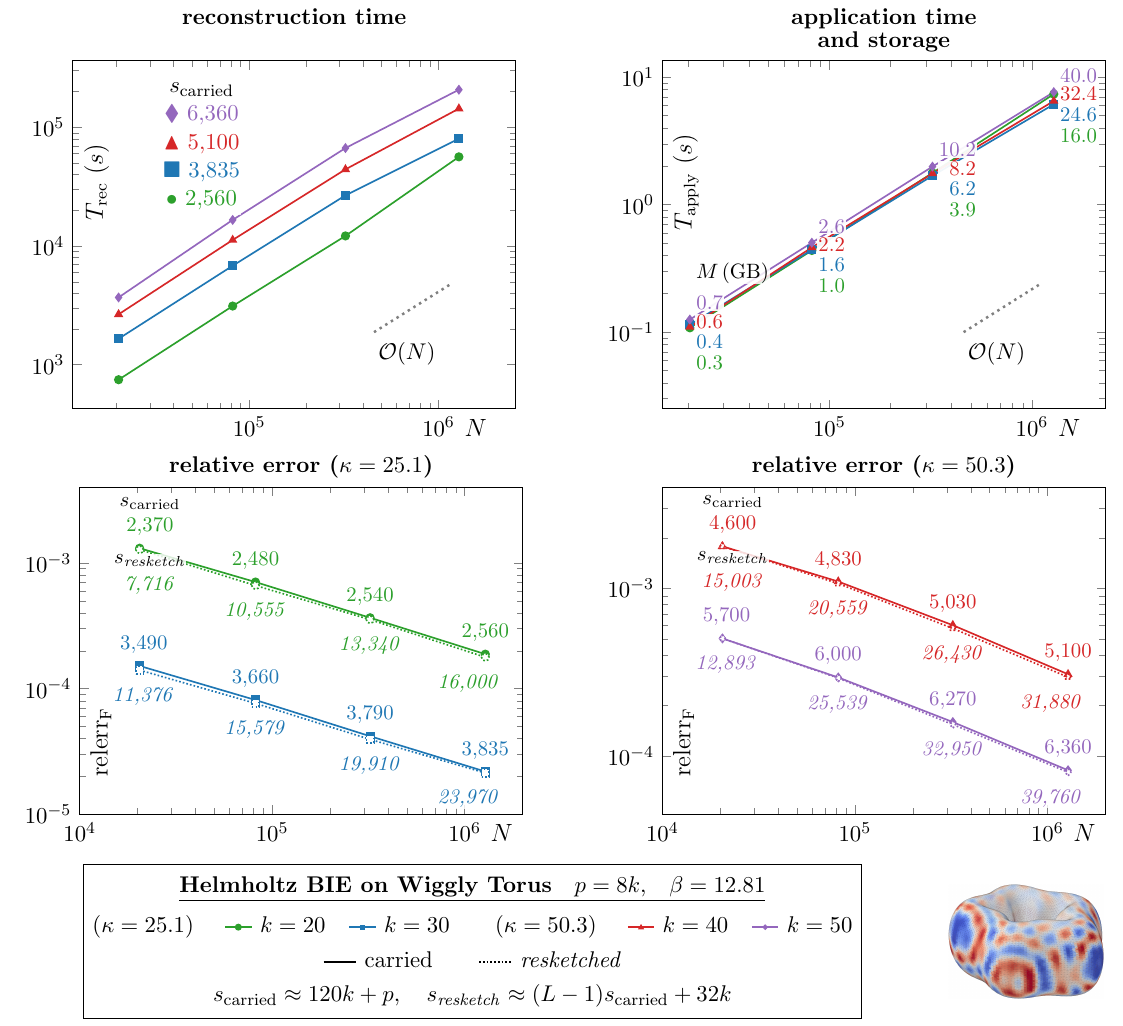}
\caption{Helmholtz BIE recovery.  The cases are
\(\kappa=25.1\) with ranks \(k=20,30\) and \(\kappa=50.3\) with ranks \(k=40,50\),
with \(p=8k\).}
\label{fig:wtorus-main}
\end{figure}

The final example concerns time-harmonic wave propagation in
a bounded domain $\Omega\subset\mathbb R^3$, whose boundary
$\Gamma$ is a five-oscillation torus scaled to a unit bounding
box. We consider the Helmholtz Dirichlet problem
\[
    (\Delta+\kappa^2)u=0 \quad\text{in }\Omega,
    \qquad
    u=g \quad\text{on }\Gamma,
\]
where $\kappa$ is the wavenumber and $g$ is prescribed boundary
data. With
$G_\kappa(x,y)=e^{i\kappa\|x-y\|}/(4\pi\|x-y\|)$,
we represent the solution by
\begin{equation}
    u(x)=\int_\Gamma
    \left[
        \frac{\partial G_\kappa(x,y)}{\partial n_y}
        +i\kappa G_\kappa(x,y)
    \right]\sigma(y)\,dS_y,
    \qquad x\in\Omega,
    \label{eq:wtorus-potential}
\end{equation}
where $n_y$ is the outward normal and $\sigma$ is an unknown
surface density. Taking the interior boundary trace and
imposing $u=g$ gives
\begin{equation}
    \left(
        -\frac12 I
        +i\kappa\mathcal S_\kappa
        +\mathcal D_\kappa
    \right)\sigma=g,
    \label{eq:wtorus-bie}
\end{equation}
where $\mathcal S_\kappa$ and $\mathcal D_\kappa$ are the
single- and double-layer boundary operators.
Once $\sigma$ is computed, evaluating
\eqref{eq:wtorus-potential} recovers $u$ at interior points.
The matrix $\mtx A$ to be recovered is the dense matrix
obtained by corrected Nystr\"om discretization of the boundary
operator in \eqref{eq:wtorus-bie} on curved triangular patches.
It maps discrete surface densities to Dirichlet boundary data.
Products with $\mtx A$ and $\mtx A^*$ are evaluated using a
Helmholtz FMM with local quadrature corrections
\cite{sushnikova2023fmmlu}.

We use ranks \(k=20,30\) for \(\kappa = 25.1\) and ranks \(k=40,50\)
for \(\kappa = 50.3\).  All recovery runs use oversampling \(p=8k\) and maximum
leaf size \(6k\).
Recovery samples are converted to complex single precision;
held-out samples remain in complex double precision. The larger ranks used at the higher frequency are consistent
with the frequency dependence of nondirectional hierarchical
compression for Helmholtz kernels \cite{engquist2007fast}.  On the largest problem, with 1.28 million unknowns, carried
and resketched errors differ by at most 6\%, while resketched
recovery uses approximately 6.25 times as many forward queries.

For validation, we prescribe $g$ as the boundary trace of
a Helmholtz field generated by two exterior point sources.
We solve the original FMM-based system using unrestarted,
unpreconditioned GMRES and compare the resulting potential
with the analytic field at interior target points. 
The relative target-point errors for both
frequencies appear in \cref{fig:wtorus-geometry}.
On the finer meshes, the target-point errors exhibit
approximately third-order convergence under mesh refinement.

\FloatBarrier
 \section{Conclusion}

We developed two randomized algorithms for recovering explicit,
strongly admissible $\mathcal H^2$ approximations using only
products with a matrix and its adjoint. Carried recovery requires
one initial forward--adjoint sketch pair and updates it throughout
the hierarchy. The samples can be acquired in a single batch and require
no hierarchy-specific probing. Under the stated
geometric and rank conditions, the required sample count is
independent of tree depth. Resketched recovery draws fresh
Gaussian tests at each level and satisfies a global quasi-optimal
bound on the expected Frobenius error relative to the prescribed
approximation class.

Experiments on Laplace, Dirichlet-to-Neumann, and Helmholtz
operators with up to $1.28$ million unknowns show approximately
linear storage and compressed application cost at fixed rank.
With sufficient oversampling,
carried recovery attains accuracy comparable to resketched
recovery while using many fewer operator queries. 
Developing a global error analysis for carried recovery is a
 direction for future work. 
\appendix

\section{Proofs and Supporting Estimates}
\label{app:proofs_and_supporting_estimates}
We first establish a fixed-rank Gaussian range estimate and
the one-level recovery bound with sketch reuse. We then combine
the one-level bound with orthogonality across levels to prove
the multilevel guarantee.

\subsection{Fixed-Rank One-Sided Sketch Range Estimate}
\label{app:randomized-estimates}

The following proof adapts the one-sided sketch argument of
\cite[Theorem~4.2]{amsel2026quasioptimal}, building on the
Gaussian range-finding analysis of
\cite[Sections~9--10]{halko2011finding}.

\begin{lemma}
\label{lem:gaussian-range-estimate}
Let $\field\in\{\R,\C\}$, let $\mtx A\in\field^{m\times n}$, and let
$1\leq k\leq m$ and $p\geq2$.  Draw a standard Gaussian matrix
$\mtx\Omega\in\field^{n\times(k+p)}$, and let $\mtx Q$ contain the top $k$
left singular vectors of $\mtx A\mtx\Omega$.  Define
\begin{equation}
\label{eq:appendix-range-alpha}
    \alpha_{k,p}^2
    :=
    1+
    \left(
        \sqrt{
            p\min\left\{
                {e^2(k+p)}/{(p^2-1)},
                {k}/{(p-1)}
            \right\}
        }
        +\sqrt{{k}/{(p-1)}}
    \right)^2.
\end{equation}
Then
\begin{equation}
\label{eq:appendix-gaussian-range}
    \E\normfro{(\mtx I-\mtx Q\mtx Q^*)\mtx A}^2
    \leq
    \alpha_{k,p}^2
    \min_{\rank(\mtx M)\leq k}\normfro{\mtx A-\mtx M}^2.
\end{equation}
\end{lemma}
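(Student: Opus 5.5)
We follow the one-sided sketch argument of \cite[Theorem~4.2]{amsel2026quasioptimal}, built on the Halko--Martinsson--Tropp analysis. Write the SVD $\mtx A=\mtx U_1\mtx\Sigma_1\mtx V_1^*+\mtx U_2\mtx\Sigma_2\mtx V_2^*$, where $\mtx\Sigma_1$ holds the top $k$ singular values. Set $\mtx\Omega_1=\mtx V_1^*\mtx\Omega\in\field^{k\times(k+p)}$ and $\mtx\Omega_2=\mtx V_2^*\mtx\Omega$. By rotation invariance these are independent standard Gaussian matrices, and $\mtx\Omega_1$ has full row rank almost surely. Let $\tau_k^2:=\normfro{\mtx\Sigma_2}^2=\min_{\rank\mtx M\le k}\normfro{\mtx A-\mtx M}^2$.

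The key deterministic step handles the rank-$k$ truncation. Let $\mtx P$ be the orthogonal projector onto $\range(\mtx A\mtx\Omega)$, and let $\mtx Q$ span the best rank-$k$ approximation of $\mtx A\mtx\Omega$. Since $\mtx Q\mtx Q^*\mtx A$ is the orthogonal projection of $\mtx A$ onto a $k$-dimensional subspace of $\range(\mtx P)$, we have
\[
\normfro{(\mtx I-\mtx Q\mtx Q^*)\mtx A}^2=\normfro{(\mtx I-\mtx P)\mtx A}^2+\normfro{\mtx P\mtx A-\mtx Q\mtx Q^*\mtx A}^2 .
\]
To control the second term, note that $\mtx Q\mtx Q^*\mtx P\mtx A$ is in fact the best rank-$k$ approximation of $\mtx P\mtx A$ within $\range(\mtx P)$. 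This is the standard fact used in the fixed-rank analysis, \cite[Theorem~9.3]{halko2011finding} and Gu's refinement. For the test matrix $\mtx X=\mtx A\mtx\Omega\mtx\Omega_1^\dagger\mtx\Sigma_1^{-1}$-type comparison, one instead bounds both terms by comparing with the explicit rank-$k$ matrix $\mtx F=\mtx A\mtx\Omega\mtx\Omega_1^\dagger\mtx V_1^*$. This matrix equals $\mtx U_1\mtx\Sigma_1\mtx V_1^*+\mtx U_2\mtx\Sigma_2\mtx\Omega_2\mtx\Omega_1^\dagger\mtx V_1^*$ and has range inside $\range(\mtx A\mtx\Omega)$. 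Then
\[
\normfro{\mtx A-\mtx F}^2=\tau_k^2+\normfro{\mtx\Sigma_2\mtx\Omega_2\mtx\Omega_1^\dagger}^2 ,
\]
which is a Pythagorean split across $\mtx V_2$ versus $\mtx V_1$. The desired inequality takes the form
\[
\normfro{(\mtx I-\mtx Q\mtx Q^*)\mtx A}\le \tau_k+\bigl(\text{truncation excess}\bigr)+\normfro{\mtx\Sigma_2\mtx\Omega_2\mtx\Omega_1^\dagger}.
\]
We organize this so that the squared expectation yields $1+(a+b)^2$, matching \eqref{eq:appendix-range-alpha}. Concretely, we use the decomposition $\normfro{(\mtx I-\mtx Q\mtx Q^*)\mtx A}^2=\tau_k^2+\Delta$, where $\Delta$ is bounded by $(\sqrt{X}+\sqrt{Y})^2$. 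Here $Y=\normfro{\mtx\Sigma_2\mtx\Omega_2\mtx\Omega_1^\dagger}^2$ is the range-finding excess, and $X$ is the excess from truncating $\mtx P\mtx A$ to rank $k$. We bound $X$ through the singular values of $\mtx P\mtx A$ beyond the $k$th, using $\mtx F$ as a competitor.

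For the probabilistic steps, we condition on $\mtx\Omega_1$ and use $\E\normfro{\mtx S\mtx G\mtx T}^2=\normfro{\mtx S}^2\normfro{\mtx T}^2$ for Gaussian $\mtx G$. The inverse-Wishart identity $\E\normfro{\mtx\Omega_1^\dagger}^2=k/(p-1)$ holds over $\R$, and the bound $k/p$ over $\C$ is no larger. Together these give $\E Y\le\tfrac{k}{p-1}\tau_k^2$, which produces the $\sqrt{k/(p-1)}$ term. For the $X$ term we need a bound of the form $\E X\le p\min\{e^2(k+p)/(p^2-1),\,k/(p-1)\}\tau_k^2$. The second branch follows from the same Wishart estimate applied with $p$ extra directions. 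The first branch follows from the HMT moment bound $\E\normtwo{\mtx\Omega_1^\dagger}^2\le e^2(k+p)/(p^2-1)$ combined with $\normfro{\mtx\Sigma_2\mtx\Omega_2}^2$ contributing at most $p\,\tau_k^2$ after projecting onto the $p$-dimensional excess subspace. We finish with Cauchy--Schwarz, $\E(\sqrt X+\sqrt Y)^2\le(\sqrt{\E X}+\sqrt{\E Y})^2$.

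The main obstacle is the deterministic inequality that isolates the truncation excess $X$ with the factor $p$. The excess lives in a $p$-dimensional subspace of $\range(\mtx A\mtx\Omega)$ orthogonal to the retained directions, so only $p$ directions of $\mtx\Sigma_2\mtx\Omega_2\mtx\Omega_1^\dagger$ contribute, and we must show this with a spectral norm on $\mtx\Omega_1^\dagger$. If that fails, we would fall back on reproducing \cite[Theorem~4.2]{amsel2026quasioptimal} verbatim, checking only that the complex case changes the Wishart constants favorably.
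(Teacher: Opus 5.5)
There is a genuine gap. It sits exactly at the step you flag as the main obstacle, and the route you sketch toward it starts from a false claim.

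\textbf{Where the argument breaks.} Here $\mtx Q$ holds the top $k$ left singular vectors of the sketch $\mtx A\mtx\Omega$, not of $\mtx P\mtx A$. These two matrices have the same range, $\range(\mtx P)$, but in general different dominant $k$-dimensional left singular subspaces. So $\mtx Q\mtx Q^*\mtx P\mtx A$ is not the best rank-$k$ approximation of $\mtx P\mtx A$. The results you cite (HMT Theorem~9.3 and Gu's refinement) analyze a different algorithm, the one that truncates $\mtx P\mtx A$.

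As a result, your split $\normfro{(\mtx I-\mtx P)\mtx A}^2+\normfro{(\mtx P-\mtx Q\mtx Q^*)\mtx A}^2$ stalls. The competitor $\mtx F$ lies in $\range(\mtx P)$ but not in $\range(\mtx Q)$, so it only yields $\normfro{(\mtx I-\mtx P)\mtx A}^2\le\tau_k^2+Y$, and the second term is never controlled. The inequality $\Delta\le(\sqrt X+\sqrt Y)^2$ is asserted with $X$ never defined, so the factor $p\,\normtwo{\mtx\Omega_1^\dagger}^2$ has no derivation.

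\textbf{The missing idea.} Split along the right singular subspaces rather than through $\mtx P$, and use the optimality that $\mtx Q$ actually has.

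First, since $\mtx A_1\mtx A_2^*=\mtx 0$ for $\mtx A_1=\mtx U_1\mtx\Sigma_1\mtx V_1^*$ and $\mtx A_2=\mtx A-\mtx A_1$, you get $\normfro{(\mtx I-\mtx Q\mtx Q^*)\mtx A}^2\le\normfro{(\mtx I-\mtx Q\mtx Q^*)\mtx A_1}^2+\tau_k^2$.

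Second, your identity for $\mtx F$, rearranged, reads $\mtx A_1=\mtx A\mtx\Omega\mtx\Omega_1^\dagger\mtx V_1^*-\mtx U_2\mtx\Sigma_2\mtx\Omega_2\mtx\Omega_1^\dagger\mtx V_1^*$. Applying $\mtx I-\mtx Q\mtx Q^*$ to it gives $\normfro{(\mtx I-\mtx Q\mtx Q^*)\mtx A_1}\le\normfro{(\mtx I-\mtx Q\mtx Q^*)\mtx A\mtx\Omega}\,\normtwo{\mtx\Omega_1^\dagger}+\sqrt Y$.

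Third, $\mtx Q\mtx Q^*\mtx A\mtx\Omega$ is the truncated SVD of $\mtx A\mtx\Omega$. Compare it with the rank-$k$ matrix $\mtx A\mtx\Omega\mtx\Pi$, where $\mtx\Pi:=\mtx\Omega_1^\dagger\mtx\Omega_1$. Using $\mtx\Omega_1(\mtx I-\mtx\Pi)=\mtx 0$, this gives $\normfro{(\mtx I-\mtx Q\mtx Q^*)\mtx A\mtx\Omega}\le\normfro{\mtx\Sigma_2\mtx\Omega_2(\mtx I-\mtx\Pi)}$.

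Fourth, $\mtx I-\mtx\Pi$ is a rank-$p$ projector depending only on $\mtx\Omega_1$, so $\E[\normfro{\mtx\Sigma_2\mtx\Omega_2(\mtx I-\mtx\Pi)}^2\mid\mtx\Omega_1]=p\,\tau_k^2$. This is where both the factor $p$ and the spectral norm of $\mtx\Omega_1^\dagger$ come from.

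Finally, set $X:=\normtwo{\mtx\Omega_1^\dagger}^2\normfro{\mtx\Sigma_2\mtx\Omega_2(\mtx I-\mtx\Pi)}^2$. The second branch of the minimum is just $\normtwo{\mtx\Omega_1^\dagger}\le\normfro{\mtx\Omega_1^\dagger}$. Your listed moment bounds and the Minkowski step then complete the proof as you intended.
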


\begin{proof}
If $n\leq k$, then
$\operatorname{range}(\mtx A\mtx\Omega)=\operatorname{range}(\mtx A)$ with
probability one, and
both sides of \cref{eq:appendix-gaussian-range} vanish.
Now assume $k<n$.  Let $\mtx A_1=\mtx U_1\mtx\Sigma_1\mtx V_1^*$ be a
best rank-$k$ approximation to $\mtx A$, and let
$\mtx A_2=\mtx U_2\mtx\Sigma_2\mtx V_2^*=\mtx A-\mtx A_1$.  Then
\begin{equation}
\label{eq:appendix-range-split}
    \begin{aligned}
        \normfro{(\mtx I-\mtx Q\mtx Q^*)\mtx A}^2
        &=
        \normfro{(\mtx I-\mtx Q\mtx Q^*)\mtx A_1}^2
        +\normfro{(\mtx I-\mtx Q\mtx Q^*)\mtx A_2}^2\\
        &\leq
        \normfro{(\mtx I-\mtx Q\mtx Q^*)\mtx A_1}^2
        +\normfro{\mtx\Sigma_2}^2.
    \end{aligned}
\end{equation}

It remains to bound the first term in
\cref{eq:appendix-range-split}, which measures the part of
$\mtx A_1$ missed by the recovered basis. Set $\mtx\Omega_i=\mtx V_i^*\mtx\Omega$ for $i=1,2$.  These are independent
standard Gaussian matrices, and
$\mtx\Omega_1\mtx\Omega_1^\dagger=\mtx I$ with probability one.
Right-multiplying the identity
$\mtx A\mtx\Omega
=\mtx U_1\mtx\Sigma_1\mtx\Omega_1
+\mtx U_2\mtx\Sigma_2\mtx\Omega_2$ by
$\mtx\Omega_1^\dagger\mtx V_1^*$ and rearranging gives
\[
        \mtx A_1=
        \mtx A\mtx\Omega\mtx\Omega_1^\dagger\mtx V_1^*
        -\mtx U_2\mtx\Sigma_2\mtx\Omega_2
        \mtx\Omega_1^\dagger\mtx V_1^*.
\]
Therefore
\begin{equation}
\label{eq:appendix-range-two-terms}
    \normfro{(\mtx I-\mtx Q\mtx Q^*)\mtx A_1}
    \leq
    \normfro{(\mtx I-\mtx Q\mtx Q^*)\mtx A\mtx\Omega}
    \normtwo{\mtx\Omega_1^\dagger}
    +\normfro{\mtx\Sigma_2\mtx\Omega_2\mtx\Omega_1^\dagger}.
\end{equation}

Let $\mtx\Pi=\mtx\Omega_1^\dagger\mtx\Omega_1$.  This is the orthogonal
projector onto $\operatorname{range}(\mtx\Omega_1^*)$ and has rank $k$.
Since $\mtx A\mtx\Omega\mtx\Pi$ has rank at most $k$,
the truncated SVD $\mtx Q\mtx Q^*\mtx A\mtx\Omega$
has no larger Frobenius approximation error than this competitor.
Additionally, $\mtx\Omega_1(\mtx I-\mtx\Pi)=\mtx 0$, and the competitor's
residual contains only trailing components. This gives
\begin{equation}
\label{eq:appendix-rank-p-competitor}
    \normfro{(\mtx I-\mtx Q\mtx Q^*)\mtx A\mtx\Omega}
    \leq
    \normfro{\mtx A\mtx\Omega(\mtx I-\mtx\Pi)}
    =
    \normfro{\mtx\Sigma_2\mtx\Omega_2(\mtx I-\mtx\Pi)}.
\end{equation}
Since $\mtx\Omega_2$ is independent of $\mtx\Omega_1$, we can apply the fixed-matrix
Gaussian identity \cite[Proposition~A.1]{halko2011finding}. Additionally, the matrix $(\mtx I-\mtx\Pi)$ has rank $p$.
\[
    \begin{aligned}
        \E_{\mtx\Omega_2}
        \normfro{\mtx\Sigma_2\mtx\Omega_2(\mtx I-\mtx\Pi)}^2
        &=
        \normfro{\mtx\Sigma_2}^2\normfro{\mtx I-\mtx\Pi}^2
        =p\normfro{\mtx\Sigma_2}^2,\\
        \E_{\mtx\Omega_2}
        \normfro{\mtx\Sigma_2\mtx\Omega_2\mtx\Omega_1^\dagger}^2
        &=
        \normfro{\mtx\Sigma_2}^2
        \normfro{\mtx\Omega_1^\dagger}^2.
    \end{aligned}
\]
The pseudoinverse of a Gaussian matrix has the following bounds
\[
 \E\normfro{\mtx\Omega_1^\dagger}^2\leq k/(p-1), \qquad  \E\normtwo{\mtx\Omega_1^\dagger}^2
    \leq {e^2(k+p)}/{(p^2-1)},
\]
where the Frobenius norm bound is in \cite[Proposition~A.6]{halko2011finding}, and
the spectral norm bound is in \cite[Lemma~3.1]{nakatsukasa2020fast} for $k\geq2$.
For $k=1$, the spectral norm bound is true as well, since the spectral and Frobenius norms coincide.
Using Minkowski's inequality in $L^2$, applied to
\cref{eq:appendix-range-two-terms,eq:appendix-rank-p-competitor} and combined
with \cref{eq:appendix-range-split}, gives the result.
\end{proof} \subsection{One-Level Recovery with Sketch Reuse}
\label{sec:app-one-level-recovery}

The preceding estimate controls the far-field residual left by basis recovery. We now account for the additional error from extracting nearby coefficients with the same sketches. The one-level recovery estimate also holds when only
a subset $\mathcal T$ of boxes is compressed. In this case, for $\tau\notin\mathcal T$, we choose $\mtx U_\tau=\mtx V_\tau=\mtx I$ and set $\msf R_\tau$ to the empty set. The maxima in both the sample-size condition and the error bound
are then taken over $\tau\in\mathcal T$.

\begin{proof}[Proof of \Cref{lem:one-level-recovery}]
To compare recovery with an arbitrary admissible approximation,
fix $\overline{\mtx A}\in\mathcal M$ and, for each box $\tau$, set
\[
\eta_{\tau,\mathrm{row}}
:=\normfro{\mtx A_{\msf B,\msf F}
           -\overline{\mtx A}_{\msf B,\msf F}},\qquad
\eta_{\tau,\mathrm{col}}
:=\normfro{\mtx A_{\msf F,\msf B}
           -\overline{\mtx A}_{\msf F,\msf B}}.
\]
The corresponding far-field blocks of $\overline{\mtx A}$
have rank at most $k_\tau$.

Using unitary invariance, we work with transformed coordinates
\(\mtx C=\mtx U^*\mtx A\mtx V\).
Fix a box $\tau$ and consider
$
    [\mtx C_{\msf R,\msf B}\quad
     \mtx C_{\msf R,\msf N}\quad
     \mtx C_{\msf R,\msf F}].
$
The first two blocks are recovered by block extraction, while
\(\mtx C_{\msf R,\msf F}\) is discarded. 
Let $\mtx G:=\mtx\Omega_{\msf B\cup\msf N}$ and
$\widehat{\mtx G}:=\widehat{\mtx\Omega}_{\msf B\cup\msf N}$.
Let $\mtx H_1$ contain the $k_\tau+p$ orthonormal nullspace
vectors of $\mtx G$ used in block nullification, and let
$\mtx H_2$ have orthonormal columns spanning
$\range(\mtx G^*)$.
Conditional on $\mtx G$ and $\mtx V$,
\[
    \mtx\Omega_{\msf F,1}
    :=\mtx\Omega_{\msf F}\mtx H_1,
    \qquad
    \mtx\Omega_{\msf F,2}
    :=\mtx\Omega_{\msf F}\mtx H_2
\]
are independent standard Gaussian matrices.
Block nullification uses $\mtx\Omega_{\msf F,1}$ as its
test matrix. By \cref{lem:gaussian-range-estimate},
the discarded far-field block satisfies
\begin{equation}
\E\!\left[
  \normfro{\mtx C_{\msf R,\msf F}}^2
  \mid \mtx G,\mtx V
\right]\leq \alpha_{k_\tau,p}^2 \eta_{\tau,\rm {row}}^2.
\label{eq:error_tau_row} 
\end{equation}

It remains to account for the error introduced by block extraction. By \cref{eq:one-level-block-extraction},
\[ [\widetilde{\mtx C}_{\msf R,\msf B}\quad \widetilde{\mtx C}_{\msf R,\msf N}] - [\mtx C_{\msf R,\msf B}\quad \mtx C_{\msf R,\msf N}] = \mtx C_{\msf R,\msf F} \widehat{\mtx\Omega}_{\msf F} \pinv{\widehat{\mtx G}}. \] 
Since $\range(\pinv{\mtx G})=\range(\mtx G^*)=\range(\mtx H_2)$, \[ \widehat{\mtx\Omega}_{\msf F}\pinv{\widehat{\mtx G}} = \mtx V_{\msf F}^* \mtx\Omega_{\msf F}\mtx H_2 \mtx H_2^*\pinv{\mtx G} \mtx V_{\msf B\cup\msf N}. \]
With $\mtx G$, $\mtx V$, and $\mtx\Omega_{\msf F,1}$ fixed,
$\mtx C_{\msf R,\msf F}$ is fixed, and the remaining randomness in the
extraction term enters only through the independent Gaussian matrix
$\mtx\Omega_{\msf F,2}$. We take expectations first over $\mtx \Omega_{\msf F,2}$, then apply the conditional range estimate and take expectations over $\mtx G, \mtx V$. The  identities
\cite[Propositions~A.1 and A.6]{halko2011finding} give
 \[ \E\!\left[ \normfro{\mtx C_{\msf R,\msf F}}^2 + \normfro{ \mtx C_{\msf R,\msf F} \widehat{\mtx\Omega}_{\msf F} \pinv{\widehat{\mtx G}} }^2 \right] \leq \beta_\tau^2\eta_{\tau,\mathrm{row}}^2. \] 
 The adjoint argument gives the bound $\beta_\tau^2\eta_{\tau,\mathrm{col}}^2$ for the residual block column.

Using the forward estimate on overlapping $\msf{RR}$ blocks, every error
block is controlled by at least one of the row or column estimates.  Summing
these estimates counts each corresponding competitor block at most twice; therefore
\[ \E\normfro{ \mtx A- \widetilde{\mtx A}}^2 \leq 2 \left( \max_{\tau}\beta_\tau^2 \right) \normfro{ \mtx A -\overline{\mtx A} }^2. \] Taking the infimum over all \(\overline{\mtx A} \in \mathcal M\), then taking the square root, and applying Jensen's inequality gives the result. 
\end{proof} \subsection{Multilevel Recovery and Orthogonality}
\label{app:multilevel-proof}

We first establish orthogonality of the exact level contributions
and the levelwise recovery errors. We then combine this identity
with the one-level estimate to prove
\Cref{thm:resketched-quasioptimality}.

\begin{proof}[Proof of \Cref{lem:orthogonality-levels}]
For distinct levels $\ell, \ell'$, assume $ \ell' < \ell$.
By nestedness, the coarser contribution $\mtx D_{\ell'}$ is supported
entirely in the $\msf S_{\ell}, \msf S_{\ell}$ block when expressed
in level-$\ell$ coordinates as
\[
\begin{aligned}
\mtx D_{\ell'}
&=
\begin{bmatrix}
\mathcal U_{\msf S_\ell} & \mathcal U_{\msf R_\ell}
\end{bmatrix}
\begin{bmatrix}
\mtx B & \mtx0\\
\mtx0 & \mtx0
\end{bmatrix}
\begin{bmatrix}
\mathcal V_{\msf S_\ell} & \mathcal V_{\msf R_\ell}
\end{bmatrix}^{*},
\\
\mtx D_{\ell}
&=
\begin{bmatrix}
\mathcal U_{\msf S_\ell} & \mathcal U_{\msf R_\ell}
\end{bmatrix}
\begin{bmatrix}
\mtx0 & \mtx C_{\msf S_\ell,\msf R_\ell}\\
\mtx C_{\msf R_\ell,\msf S_\ell} & \mtx C_{\msf R_\ell,\msf R_\ell}
\end{bmatrix}
\begin{bmatrix}
\mathcal V_{\msf S_\ell} & \mathcal V_{\msf R_\ell}
\end{bmatrix}^{*}.
\end{aligned}
\]
The coefficient matrices have disjoint block support, and
the common basis matrices have orthonormal columns.
Hence $\langle\mtx D_\ell,\mtx D_{\ell'}\rangle_F=0$.
The terminal component \(\mtx W_{\ell_0}\) likewise occupies only the \(\msf S \msf S\) block at every level and is therefore orthogonal to each level contribution.
The same argument applies to the recovered contributions
and their errors.
\end{proof}

\begin{proof}[Proof of \Cref{thm:resketched-quasioptimality}]

Fix a level $\ell$, and condition on all test matrices drawn at finer levels.
Then the current core matrix $\mtx C:=\mtx C_{\msf S_{\ell+1},\msf S_{\ell+1}}$
 is fixed.
Resketched recovery uses
$\widehat{\mtx Y}=\mtx C\widehat{\mtx\Omega}$ and
$\widehat{\mtx Z}=\mtx C^*\widehat{\mtx\Psi}$
with fresh independent Gaussian test matrices.
The one-level procedure produces completed transforms
$\mtx U,\mtx V$ and neighboring $\msf{SR}$, $\msf{RS}$,
and $\msf{RR}$ coefficients, which assemble the
physical-coordinate contribution $\widetilde{\mtx D}_\ell$.
Completing the recovered coefficients with the exact
$\msf{SS}$ block, as in \Cref{lem:one-level-recovery}, gives
\[
\mtx E_\ell = \mtx D_{\ell} - \widetilde{\mtx D}_{\ell}
=
\mathcal U_{\msf S_{\ell+1}}
\left(
\mtx C-\mtx U
\begin{bmatrix}
\mtx C_{\msf S_\ell,\msf S_\ell}
&
\widetilde{\mtx C}_{\msf S_\ell,\msf R_\ell}
\\
\widetilde{\mtx C}_{\msf R_\ell,\msf S_\ell}
&
\widetilde{\mtx C}_{\msf R_\ell,\msf R_\ell}
\end{bmatrix}
\mtx V^*
\right)
\mathcal V_{\msf S_{\ell+1}}^*,
\]
where the recovered blocks vanish on nonneighboring pairs.
Let $\operatorname{opt}_\ell:=\operatorname{opt}(\mtx C)$
as defined in \eqref{eq:one-level-opt} for the current
partition, with prescribed ranks on $\mathcal T_{\ell}$ and coarser leaves in $\mathcal B_{\ell} \setminus \mathcal T_{\ell}$ left uncompressed.
Since the surviving coordinate maps have orthonormal columns, the squared estimate in the proof of \Cref{lem:one-level-recovery} gives
\[
\mathbb E\ \normfro{\mtx E_\ell}^2
\le 2\beta^2\operatorname{opt}_\ell^2.
\]

For any $\overline{\mtx A}\in
\mathcal H^2$, set
$\overline{\mtx C}:=
\mathcal U_{\msf S_{\ell+1}}^*
\overline{\mtx A}
\mathcal V_{\msf S_{\ell+1}}$.
Because these transforms act boxwise, they preserve neighbor
support and rank bounds on each far-field block
row and column, so $\overline{\mtx C}$
is an admissible one-level competitor.
Therefore,
\[
\operatorname{opt}_\ell
\le \normfro{\mtx C-\overline{\mtx C}}
\le \normfro{\mtx A-\overline{\mtx A}},
\]
and taking the infimum gives
$\operatorname{opt}_\ell\le\mathrm{OPT}$.
Orthogonality holds for every realization, and the terminal
block $\mtx C_{\msf S_{\ell_0},\msf S_{\ell_0}}$
is recovered exactly. Taking expectations over all test
matrices therefore gives
\[
\mathbb E\normfro{\mtx A-\widetilde{\mtx A}}^2
=
\textstyle{\sum}_{\ell=\ell_0}^{L}\ \mathbb E\normfro{\mtx E_\ell}^2
\le 2(L-\ell_0+1)\beta^2\mathrm{OPT}^2.
\]
Applying Jensen's inequality proves the claim.
\end{proof} 
\section{Supplementary Proofs and Algorithms}
\label{app:pseudocode}
This appendix provides the algebraic equivalence proof and detailed
algorithms. We first regroup the telescoping representation into the
conventional interaction-list form, then describe matrix--vector
multiplication with nested bases and give pseudocode for one-level,
resketched, and carried recovery.

\subsection{Equivalence with the Conventional Hierarchical Form}
\label{app:interaction-list-equivalence}

\begin{proof}[Proof of \Cref{prop:interaction-list-equivalence}]
Fix the box bases of bounded rank, and let $\widetilde{\mtx A}$ denote the
approximation in \eqref{eq:levelwise-expansion}.
The regrouping below uses only the near-neighbor sparsity
pattern of the retained level contributions and the identity
$\mtx D_\ell=\mtx W_{\ell+1}-\mtx W_\ell$. 
No orthogonality or nestedness of the box bases is used
in this regrouping.

Because the coordinate maps act boxwise, restriction to
neighboring coefficient blocks commutes with their embedding
in physical coordinates. Thus
\begin{equation}
\label{eq:h-matrix-telescoping-form}
    \widetilde{\mtx A}
    =
    \mtx W_{\ell_0}
    +\displaystyle{\sum}_{\ell=\ell_0}^{L}\ 
        \blockrestrict{\mtx D_\ell}{\mathcal N_\ell}.
\end{equation}
Here $\mathcal N_\ell$ denotes the full near-neighbor mask
on $\mathcal B_\ell$, identified with the corresponding
physical matrix entries. Pairs of coarser leaves contribute
zero, since $\mtx W_{\ell+1}$ and $\mtx W_\ell$ both equal
$\mtx A$ on those blocks.

To compare this telescoping representation with the conventional
representation, we identify the level at which each pair
first becomes well separated. The near masks are nested, 
$
    \mathcal N_L
    \subseteq
    \mathcal N_{L-1}
    \subseteq\cdots\subseteq
    \mathcal N_{\ell_0},
$
since descendants of a well-separated pair remain well separated.  
For $\ell>\ell_0$, $\mathcal I_\ell$ consists of the pairs
that are still near on the preceding level but become
well separated on level $\ell$,
\begin{equation}
\label{eq:interaction_list}
    \mathcal I_\ell
    :=
    \mathcal N_{\ell-1}\setminus\mathcal N_\ell\quad \text{for}\ \ell_0<\ell\leq L
    ,\quad \text{and}\quad
    \mathcal I_{\ell_0}
    :=
    (\mathcal N_{\ell_0})^c.
\end{equation}
The masks $\mathcal I_{\ell_0},\ldots,\mathcal I_L,\mathcal N_L$ 
form a disjoint partition of the physical matrix entries.

With the same box bases, the conventional approximation
retains $\mtx A$ on neighboring leaf pairs and uses
$\mtx W_\ell$ on pairs first separated at level $\ell$.
Therefore
\begin{equation}
\label{eq:h-matrix-interaction-form}
{\mtx A}_{\mathcal H}:=
\displaystyle{\sum}_{\ell=\ell_0}^{L}\ 
\blockrestrict{\mtx W_\ell}{\mathcal I_\ell} + \blockrestrict{\mtx A}{\mathcal N_L}.
\end{equation}
On the 2:1-balanced trees used here, this is the standard U-, V-, W-, and
X-list organization \cite[Sec.~3.3.1]{ying2004kernel}: the U-list contains
the retained leaf-neighbor interactions, while the V-, W-, and X-lists
assign each well-separated pair to the first level at which it becomes
admissible.

To show that $\mtx A_{\mathcal H}=\widetilde{\mtx A}$,
we regroup this interaction-list expression into the
telescoping form above. Expanding each interaction mask as a difference of near masks
and collecting terms restricted to the same mask gives
\[
\begin{aligned}
    {\mtx A}_{\mathcal H}
    &=
    \mtx W_{\ell_0}
    +
    \blockrestrict{
        \mtx W_{\ell_0+1}-\mtx W_{\ell_0}}
        {\mathcal N_{\ell_0}}
    +\cdots 
    +
    \blockrestrict{
        \mtx W_L-\mtx W_{L-1}}
        {\mathcal N_{L-1}}
    +
    \blockrestrict{
        \mtx A-\mtx W_L}
        {\mathcal N_L}.
\end{aligned}
\]
Using
$\mtx D_\ell=\mtx W_{\ell+1}-\mtx W_\ell$ and 
$\mtx W_{L+1}=\mtx A$,
we conclude that ${\mtx A}_{\mathcal H}= \widetilde{\mtx A}$.
\end{proof}

\begin{algorithm}[!htbp]
\caption{Boxwise matrix--vector multiplication}
\label{alg:adaptive-tree-matvec}
\small
\begin{algorithmic}[1]
\Require Vector $\mtx q$; tree; box bases
    $\mtx U_\tau=[\mtx U_{\msf S_\tau}\ \mtx U_{\msf R_\tau}]$ and
    $\mtx V_\tau=[\mtx V_{\msf S_\tau}\ \mtx V_{\msf R_\tau}]$; stored
    ordered-pair blocks $\mtx C_{\rho\sigma}$
\Ensure Vector $\mtx u=\widetilde{\mtx A}\mtx q$ produced by the stored
    representation
\State For every leaf $\lambda$, set $\mtx q_\lambda$ to its entries in
    $\mtx q$ and $\mtx u_\lambda=\mtx0$; for every processed box $\tau$, set
    $\widehat{\mtx u}_{\msf S_\tau}
     =\widehat{\mtx u}_{\msf R_\tau}=\mtx0$
\vspace{0.4ex}
\For{$\ell=L,L-1,\ldots,\ell_0$} \Comment{upward pass}
    \ForAll{$\tau\in\mathcal T_\ell$}
        \State $\displaystyle
            \begin{bmatrix}\widehat{\mtx q}_{\msf S_\tau}\\
                            \widehat{\mtx q}_{\msf R_\tau}\end{bmatrix}
            \gets \mtx V_\tau^*\mtx q_\tau$ if $\tau\in\mathcal L$, and
            $\mtx V_\tau^*\widehat{\mtx q}_\tau$ otherwise
    \EndFor
    \ForAll{nonleaf $\rho\in\mathcal T_{\ell-1}$}
        \State $\displaystyle \widehat{\mtx q}_\rho\gets
            [\widehat{\mtx q}_{\msf S_\tau}]_{
            \tau\in\operatorname{ch}(\rho)}$
    \EndFor
\EndFor
\vspace{0.4ex}
\ForAll{stored ordered same-level pair $(\rho,\sigma)$}
    \Comment{interaction pass}
    \State $\displaystyle
        \begin{bmatrix}\widehat{\mtx u}_{\msf S_\rho}\\
                        \widehat{\mtx u}_{\msf R_\rho}\end{bmatrix}
        \mathrel{+}=
        \begin{bmatrix}
            \mtx0 & \mtx C_{\msf S_\rho,\msf R_\sigma}\\
            \mtx C_{\msf R_\rho,\msf S_\sigma} &
            \mtx C_{\msf R_\rho,\msf R_\sigma}
        \end{bmatrix}
        \begin{bmatrix}\widehat{\mtx q}_{\msf S_\sigma}\\
                        \widehat{\mtx q}_{\msf R_\sigma}\end{bmatrix}$
\EndFor
\vspace{0.25ex}
\ForAll{stored ordered coarse pair $(\rho,\lambda)$, with $\lambda$ a
    coarser leaf}
    \State $\displaystyle
        \begin{bmatrix}\widehat{\mtx u}_{\msf S_\rho}\\
                        \widehat{\mtx u}_{\msf R_\rho}\end{bmatrix}
        \mathrel{+}=
        \begin{bmatrix}\mtx0\\ \mtx C_{\msf R_\rho,\lambda}\end{bmatrix}
        \mtx q_\lambda$
\EndFor
\vspace{0.25ex}
\ForAll{stored ordered fine pair $(\lambda,\rho)$, with $\lambda$ a
    coarser leaf}
    \State $\mtx u_\lambda\mathrel{+}=
        \begin{bmatrix}\mtx0 & \mtx C_{\lambda,\msf R_\rho}\end{bmatrix}
        \begin{bmatrix}\widehat{\mtx q}_{\msf S_\rho}\\
                        \widehat{\mtx q}_{\msf R_\rho}\end{bmatrix}$
\EndFor
\vspace{0.25ex}
\ForAll{$\rho,\sigma\in\mathcal T_{\ell_0}$}
    \Comment{coarsest coefficient matrix}
    \State $\widehat{\mtx u}_{\msf S_\rho}\mathrel{+}=
        \mtx C_{\msf S_\rho,\msf S_\sigma}
        \widehat{\mtx q}_{\msf S_\sigma}$
\EndFor
\vspace{0.4ex}
\For{$\ell=\ell_0,\ldots,L$} \Comment{downward pass}
    \ForAll{$\tau\in\mathcal T_\ell$}
        \State $\widehat{\mtx u}_{\msf S_\tau}\mathrel{+}=
            (\widehat{\mtx u}_{\operatorname{par}(\tau)})_\tau$
        \State $\displaystyle \mtx u_\tau\mathrel{+}=\mtx U_\tau
            \begin{bmatrix}\widehat{\mtx u}_{\msf S_\tau}\\
                            \widehat{\mtx u}_{\msf R_\tau}\end{bmatrix}$
            if $\tau\in\mathcal L$, and
            $\displaystyle \widehat{\mtx u}_\tau\gets\mtx U_\tau
            \begin{bmatrix}\widehat{\mtx u}_{\msf S_\tau}\\
                            \widehat{\mtx u}_{\msf R_\tau}\end{bmatrix}$ otherwise
    \EndFor
\EndFor
\State \Return $\mtx u=[\mtx u_\lambda]_{\lambda\in\mathcal L}$
\end{algorithmic}
\end{algorithm}

\subsection{Matrix--Vector Multiplication with Nested Bases}
\label{app:deterministic-matvec}

For each retained ordered pair $(\rho,\sigma)$, let
$\mtx C_{\rho\sigma}$ denote its coefficient block in the completed
coordinates of \cref{eq:levelwise-expansion}. These are the stored coefficients of the telescoping representation.  The interaction pass in
\Cref{alg:adaptive-tree-matvec} lists the same-level (colleague), coarse, and
fine pair types explicitly.  Here
$\ell_0$ is the coarsest transformed level: $\ell_0=2$ for strong
admissibility and $\ell_0=1$ for weak admissibility.  For simplicity, assume
$\mathcal B_{\ell_0}=\mathcal T_{\ell_0}$, so no coarser leaves occur in the
coarsest partition.  For weak admissibility, there are no coarse or fine
neighbor interactions.  The upward pass gathers each child's surviving coefficients
at its parent. The downward pass distributes the parent
contribution to its children's surviving coordinates.
At level $\ell_0$, this contribution is zero.

\begin{remark}[Matvec storage]
Only the coarse and fine interactions require both physical and transformed
box vectors, because the coarser leaf remains in physical coordinates.
Same-level interactions and the coarsest coefficient matrix use transformed
coordinates throughout.  To avoid storing both versions of the charge and
potential vectors, the local basis factors can instead be absorbed into the
coarse and fine interaction blocks.
\end{remark}
 \vspace{-0.5em}

\begin{algorithm}[!htbp]
\caption{One-level boxwise recovery}
\label{alg:recover-boxwise}
\small
\begin{algorithmic}[1]
\Require Boxes $\mathcal B$ with dimensions $m_\tau$ and neighbor lists
    $\near(\tau)\ni\tau$; compressed subset $\mathcal T$ with ranks
    $1\leq k_\tau\leq m_\tau$
\Require $p\geq2$ and
    $s\geq\max_{\tau\in\mathcal T}(n_\tau+k_\tau+p)$, where
    $n_\tau:=\sum_{\sigma\in\near(\tau)}m_\sigma$
\Require Test matrices $\mtx\Omega_\tau,\mtx\Psi_\tau$ and sketches
    $\mtx Y_\tau,\mtx Z_\tau$, all with $s$ columns
\vspace{0.35ex}
\Statex For exact sketches and $\tau\in\mathcal B$,
    $\mtx Y_\tau=\sum_{\sigma\in\mathcal B}
    \mtx C_{\tau,\sigma}\mtx\Omega_\sigma$ and
    $\mtx Z_\tau=\sum_{\sigma\in\mathcal B}
    \mtx C_{\sigma,\tau}^*\mtx\Psi_\sigma$.
\Statex For $\tau\in\mathcal T$, write
    $\msf B=\{\tau\}$, $\msf N=\near(\tau)\setminus\{\tau\}$,
    $\msf S=\msf S_\tau$, and $\msf R=\msf R_\tau$.
\vspace{0.45ex}
\ForAll{$\tau\in\mathcal T$} \Comment{parallel block nullification}
    \State $\displaystyle \mtx G_\Omega\gets
        \mtx\Omega_{\msf B \cup \msf N}$,
        $\mtx K_\Omega\gets\operatorname{null}_{k_\tau+p}(\mtx G_\Omega)$;
        define $\mtx G_\Psi,\mtx K_\Psi$ analogously
    \State $\mtx U_{\msf S}\gets
        \operatorname{orth}_{k_\tau}(\mtx Y_{\msf B}\mtx K_\Omega)$,
        $\mtx V_{\msf S}\gets
        \operatorname{orth}_{k_\tau}(\mtx Z_{\msf B}\mtx K_\Psi)$
        \label{algline:recover-boxwise-nullified-sketches}
    \State Complete to unitary transforms
        $\mtx U_\tau=[\mtx U_{\msf S}\ \mtx U_{\msf R}]$,
        $\mtx V_\tau=[\mtx V_{\msf S}\ \mtx V_{\msf R}]$
\EndFor
\vspace{0.45ex}
\State Set $\mtx U_\tau=\mtx V_\tau=\mtx I$,
    $\msf S_\tau=\tau$, and $\msf R_\tau=\varnothing$ for
    $\tau\in\mathcal B\setminus\mathcal T$
\State $\mtx U\gets\diag_{\tau\in\mathcal B}\mtx U_\tau$ and
    $\mtx V\gets\diag_{\tau\in\mathcal B}\mtx V_\tau$
\ForAll{$\tau\in\mathcal B$}
    \Comment{rotate test and sketch matrices in place}
    \State $\displaystyle
        \left(
        \begin{bmatrix}\widehat{\mtx Y}_{\msf S_\tau}\\
                        \widehat{\mtx Y}_{\msf R_\tau}\end{bmatrix},
        \begin{bmatrix}\widehat{\mtx\Psi}_{\msf S_\tau}\\
                        \widehat{\mtx\Psi}_{\msf R_\tau}\end{bmatrix}
        \right)
        \gets\mtx U_\tau^*(\mtx Y_\tau,\mtx\Psi_\tau)$
    \State Apply the analogous overwrite with $\mtx V_\tau^*$ to
        $(\mtx Z_\tau,\mtx\Omega_\tau)$
\EndFor
\vspace{0.45ex}
\ForAll{$\tau\in\mathcal T$} \Comment{parallel block extraction}
    \State $\displaystyle \widehat{\mtx G}_\Omega\gets
        \widehat{\mtx\Omega}_{\msf B \cup \msf N}$ and
        $\displaystyle \widehat{\mtx G}_\Psi\gets
        \widehat{\mtx\Psi}_{\msf B \cup \msf N}$
    \State $\displaystyle
        [\,\widetilde{\mtx C}_{\msf R,\msf B}\quad
            \widetilde{\mtx C}_{\msf R,\msf N}\,]
        \gets\widehat{\mtx Y}_{\msf R}
        \widehat{\mtx G}_\Omega^\dagger$
    \State $\displaystyle
        [\,\widetilde{\mtx C}_{\msf B,\msf R}^*\quad
            \widetilde{\mtx C}_{\msf N,\msf R}^*\,]
        \gets\widehat{\mtx Z}_{\msf R}
        \widehat{\mtx G}_\Psi^\dagger$
\EndFor
\vspace{0.35ex}
\State Store these extracted subblocks, using the $\widehat{\mtx Y}$ estimate
    on overlapping $\msf{RR}$ blocks
\State \Return recovered box transforms and neighboring coefficient blocks.
\end{algorithmic}
\end{algorithm}

\subsection{Randomized Recovery Algorithms}
\label{app:randomized-algorithms}

This section gives the one-level procedure and the two multilevel variants in
\Cref{sec:one-level-prd,sec:algorithms}.  Each multilevel method supplies the
one-level routine with boxwise test matrices and sketches in the incoming
coordinates.  In Algorithm~\ref{alg:recover-boxwise}, inputs are written
without hats; hats denote their values after the recovered box transforms are
applied.  We write \texttt{single\_level\_recovery} for this routine.  At the
finest level, its inputs are the physical test matrices and sketches.  Carried
test matrices are reused across levels, while resketched tests are redrawn
at each level.
Here $\operatorname{null}_r(\mtx G)$ denotes a matrix with
$r$ orthonormal columns in the nullspace of $\mtx G$, and
$\operatorname{orth}_r(\mtx M)$ denotes the leading $r$ left singular vectors
of $\mtx M$; $k_\tau+p$ null vectors suffice.
The one-level routine stores the recovered row and column subblocks in
$\widetilde{\mtx C}$.

\FloatBarrier

For the multilevel routines, a $\tau$ subscript denotes the
coordinates entering the current box transform; $\msf S_\tau$ and
$\msf R_\tau$ denote its output blocks.  Algorithm~\ref{alg:recover-boxwise}
overwrites the hatted arrays on $\mathcal T_\ell$ with these rotated blocks;
boxes not compressed at that level retain their $\tau$ coordinates.  The box maps
$\mathcal U_\tau,\mathcal V_\tau$ are defined in
\Cref{subsec:nested-coordinates}.  They are applied recursively, and only the local factors
$\mtx U_\tau,\mtx V_\tau$ are stored.  At level $\ell$, the one-level routine uses
$(\mathcal B,\mathcal T,\near)
=(\mathcal B_\ell,\mathcal T_\ell,\near_\ell)$. The $\msf R_\ell$ coefficient blocks refer to the recovered
neighboring $\msf{SR}$, $\msf{RS}$, and $\msf{RR}$ blocks
at level $\ell$.

\begin{algorithm}[!htbp]
\caption{Resketched multilevel recovery}
\label{alg:resketched-recovery}
\small
\begin{algorithmic}[1]
\Require $\mtx A$, $\mtx A^*$; current box partitions
    $\mathcal B_L,\ldots,\mathcal B_{\ell_0}$; compressed sets
    $\mathcal T_L,\ldots,\mathcal T_{\ell_0}$; neighbor lists; ranks
    $k_\tau$; oversampling $p$; levelwise sample counts
    $s_\ell\geq\max_{\tau\in\mathcal T_\ell}(n_\tau+k_\tau+p)$
\vspace{0.45ex}
\For{$\ell=L,L-1,\ldots,\ell_0$}
    \State For $\tau\in\mathcal B_\ell$, draw Gaussian
        $\widehat{\mtx\Omega}_\tau,\widehat{\mtx\Psi}_\tau$ with
        $s_\ell$ columns
    \State For $\tau\in\mathcal B_\ell$, set
        $\mtx\Omega_\tau\gets\mathcal V_\tau
        \widehat{\mtx\Omega}_\tau$ and
        $\mtx\Psi_\tau\gets\mathcal U_\tau
        \widehat{\mtx\Psi}_\tau$
    \State Form sketches
        $\mtx Y_\tau=\sum_{\sigma\in\mathcal B_\ell}
        \mtx A_{\tau\sigma}\mtx\Omega_\sigma$ and
        $\mtx Z_\tau=\sum_{\sigma\in\mathcal B_\ell}
        \mtx A_{\sigma\tau}^*\mtx\Psi_\sigma$
    \State For $\tau\in\mathcal B_\ell$, set
        $\widehat{\mtx Y}_\tau\gets\mathcal U_\tau^*\mtx Y_\tau$ and
        $\widehat{\mtx Z}_\tau\gets\mathcal V_\tau^*\mtx Z_\tau$
    \State Call $\texttt{single\_level\_recovery}
        (\mathcal B_\ell,\mathcal T_\ell,\near_\ell,
        \widehat{\mtx Y}_\tau,\widehat{\mtx Z}_\tau,
        \widehat{\mtx\Omega}_\tau,\widehat{\mtx\Psi}_\tau)$
    \State Store $\{\mtx U_\tau,\mtx V_\tau\}_{\tau\in\mathcal T_\ell}$ and the retained $\msf R_{\ell}$ coefficients of $\widetilde{\mtx C}$
\EndFor
\vspace{0.45ex}
\State Compute and store
    ${\mtx C}_{\msf S_{\ell_0},\msf S_{\ell_0}}\gets
    \mathcal U_{\msf S_{\ell_0}}^*
    (\mtx A\mathcal V_{\msf S_{\ell_0}})$
\vspace{0.35ex}
\State \Return recovered factors $\mtx U_\tau,\mtx V_\tau$ for
    $\tau\in\mathcal T_\ell$ at each level and stored coefficient blocks
    $\widetilde{\mtx C}$
\end{algorithmic}
\end{algorithm}
 
\FloatBarrier

Carried recovery subtracts the recovered neighboring
$\msf{SR}$ and $\msf{RS}$ contributions from the surviving
forward and adjoint sketch rows, respectively, and passes
these rows to the next level. See
Algorithm~\ref{alg:carried-recovery}.

\begin{algorithm}[!htbp]
\caption{Carried-sketch multilevel recovery}
\label{alg:carried-recovery}
\small
\begin{algorithmic}[1]
\Require $\mtx A$, $\mtx A^*$; current box partitions
    $\mathcal B_L,\ldots,\mathcal B_{\ell_0-1}$; compressed sets
    $\mathcal T_L,\ldots,\mathcal T_{\ell_0}$; neighbor lists; ranks $k_\tau$;
    oversampling $p$; sample count $s_{\mathrm{carried}}$ from
    \cref{eq:rank-query-count}
\vspace{0.45ex}
\State For $\tau\in\mathcal L$, draw Gaussian
    $\mtx\Omega_\tau,\mtx\Psi_\tau$ with $s_{\mathrm{carried}}$ columns
\Statex \hspace{\algorithmicindent}$\displaystyle
    \mtx Y_\tau=\sum_{\sigma\in\mathcal L}
    \mtx A_{\tau\sigma}\mtx\Omega_\sigma,
    \qquad
    \mtx Z_\tau=\sum_{\sigma\in\mathcal L}
    \mtx A_{\sigma\tau}^*\mtx\Psi_\sigma,
    \qquad \tau\in\mathcal L.$
\State For $\tau\in\mathcal L$, initialize
    $(\widehat{\mtx Y}_\tau,\widehat{\mtx Z}_\tau,
      \widehat{\mtx\Omega}_\tau,\widehat{\mtx\Psi}_\tau)
    \gets(\mtx Y_\tau,\mtx Z_\tau,\mtx\Omega_\tau,\mtx\Psi_\tau)$
\vspace{0.45ex}
\For{$\ell=L,L-1,\ldots,\ell_0$}
    \State Call $\texttt{single\_level\_recovery}$ with
        $(\mathcal B_\ell,\mathcal T_\ell,\near_\ell,
        \widehat{\mtx Y}_\tau,\widehat{\mtx Z}_\tau,
        \widehat{\mtx\Omega}_\tau,\widehat{\mtx\Psi}_\tau)$
    \State Store
        $\{\mtx U_\tau,\mtx V_\tau\}_{\tau\in\mathcal T_\ell}$, $\msf R_{\ell}$ coefficients of
        $\widetilde{\mtx C}$
    \Statex \hspace{\algorithmicindent}The call overwrites the hatted arrays on
        $\mathcal T_\ell$ by their $\msf S_\tau/\msf R_\tau$ row blocks
    \vspace{0.25ex}
    \ForAll{$\tau\in\mathcal B_\ell$}
        \State $\displaystyle
            \widehat{\mtx Y}_{\msf S_\tau}\mathrel{-}=
            \sum_{\rho\in\near_\ell(\tau)}
            \widetilde{\mtx C}_{\msf S_\tau,\msf R_\rho}
            \widehat{\mtx\Omega}_{\msf R_\rho}$
        \State $\displaystyle
            \widehat{\mtx Z}_{\msf S_\tau}\mathrel{-}=
            \sum_{\rho\in\near_\ell(\tau)}
            \widetilde{\mtx C}_{\msf R_\rho,\msf S_\tau}^{*}
            \widehat{\mtx\Psi}_{\msf R_\rho}$
    \EndFor
    \vspace{0.25ex}
    \ForAll{nonleaf $\tau\in\mathcal B_{\ell-1}$}
        \State $(\widehat{\mtx Y}_\tau,\widehat{\mtx Z}_\tau,
            \widehat{\mtx\Omega}_\tau,\widehat{\mtx\Psi}_\tau)
            \gets[(\widehat{\mtx Y}_{\msf S_\nu},
            \widehat{\mtx Z}_{\msf S_\nu},
            \widehat{\mtx\Omega}_{\msf S_\nu},
            \widehat{\mtx\Psi}_{\msf S_\nu})]
            _{\nu\in\operatorname{ch}(\tau)}$
    \EndFor
\EndFor
\vspace{0.45ex}
\State Collect the surviving rows and store
    $\widetilde{\mtx C}_{\msf S_{\ell_0},\msf S_{\ell_0}}
    \gets\widehat{\mtx Y}_{\msf S_{\ell_0}}
    (\widehat{\mtx\Omega}_{\msf S_{\ell_0}})^\dagger$
\vspace{0.35ex}
\State \Return recovered factors $\mtx U_\tau,\mtx V_\tau$ for
    $\tau\in\mathcal T_\ell$ at each level and stored coefficient blocks
    $\widetilde{\mtx C}$
\end{algorithmic}
\end{algorithm}
  
\pagebreak{}

\bibliographystyle{siamplain}
\bibliography{refs}

\begin{thebibliography}{10}

\bibitem{amsel2026fixedsparsity}
{\sc N.~Amsel, T.~Chen, F.~Duman~Keles, D.~Halikias, C.~Musco, and C.~Musco},
  {\em Fixed-sparsity matrix approximation from matrix-vector products}, SIAM
  Journal on Matrix Analysis and Applications, 47 (2026), pp.~483--511,
  \url{https://doi.org/10.1137/25M1742710}.

\bibitem{amsel2026quasioptimal}
{\sc N.~Amsel, T.~Chen, F.~Duman~Keles, D.~Halikias, C.~Musco, C.~Musco, and
  D.~Persson}, {\em Quasi-optimal hierarchically semi-separable matrix
  approximation}, SIAM Journal on Matrix Analysis and Applications, 47 (2026),
  pp.~586--621, \url{https://doi.org/10.1137/25M176622X}.

\bibitem{ballani2016hierarchical}
{\sc J.~Ballani and D.~Kressner}, {\em Matrices with hierarchical low-rank
  structures}, in Exploiting Hidden Structure in Matrix Computations:
  Algorithms and Applications, M.~Benzi and V.~Simoncini, eds., vol.~2173 of
  Lecture Notes in Mathematics, Springer, Cham, 2016, pp.~161--209,
  \url{https://doi.org/10.1007/978-3-319-49887-4_3}.

\bibitem{borm2010efficient}
{\sc S.~B{\"o}rm}, {\em Efficient Numerical Methods for Non-local Operators:
  {$\mathcal{H}^2$}-Matrix Compression, Algorithms and Analysis}, vol.~14 of
  EMS Tracts in Mathematics, European Mathematical Society, 2010.

\bibitem{borm2003introduction}
{\sc S.~B{\"o}rm, L.~Grasedyck, and W.~Hackbusch}, {\em Introduction to
  hierarchical matrices with applications}, Engineering Analysis with Boundary
  Elements, 27 (2003), pp.~405--422,
  \url{https://doi.org/10.1016/S0955-7997(02)00152-2}.

\bibitem{boukaram2025adaptive}
{\sc W.~H. Boukaram, Y.~Liu, P.~Ghysels, and X.~S. Li}, {\em Adaptive sketching
  based construction of {$\mathcal{H}^2$} matrices on {GPUs}}, in 2025 IEEE
  International Parallel and Distributed Processing Symposium Workshops
  (IPDPSW), 2025, pp.~433--442,
  \url{https://doi.org/10.1109/IPDPSW66978.2025.00069}.

\bibitem{boukaram2019randomized}
{\sc W.~H. Boukaram, G.~Turkiyyah, and D.~E. Keyes}, {\em Randomized {GPU}
  algorithms for the construction of hierarchical matrices from matrix-vector
  operations}, SIAM Journal on Scientific Computing, 41 (2019), pp.~C339--C366,
  \url{https://doi.org/10.1137/18M1210101}.

\bibitem{boulle2023elliptic}
{\sc N.~Boull{\'e}, D.~Halikias, and A.~Townsend}, {\em Elliptic {PDE} learning
  is provably data-efficient}, Proceedings of the National Academy of Sciences,
  120 (2023), p.~e2303904120, \url{https://doi.org/10.1073/pnas.2303904120}.

\bibitem{casulli2024functions}
{\sc A.~A. Casulli, D.~Kressner, and L.~Robol}, {\em Computing functions of
  symmetric hierarchically semiseparable matrices}, SIAM Journal on Matrix
  Analysis and Applications, 45 (2024), pp.~2314--2338,
  \url{https://doi.org/10.1137/24M1642354}.

\bibitem{chen2025nearoptimal}
{\sc T.~Chen, F.~Duman~Keles, D.~Halikias, C.~Musco, C.~Musco, and D.~Persson},
  {\em Near-optimal hierarchical matrix approximation from matrix-vector
  products}, in Proceedings of the 2025 Annual ACM-SIAM Symposium on Discrete
  Algorithms (SODA), 2025, pp.~2656--2692,
  \url{https://doi.org/10.1137/1.9781611978322.87}.

\bibitem{engquist2007fast}
{\sc B.~Engquist and L.~Ying}, {\em Fast directional multilevel algorithms for
  oscillatory kernels}, SIAM Journal on Scientific Computing, 29 (2007),
  pp.~1710--1737, \url{https://doi.org/10.1137/07068583X}.

\bibitem{gorman2019matrixfree}
{\sc C.~Gorman, G.~Ch{\'a}vez, P.~Ghysels, T.~Mary, F.-H. Rouet, and X.~S. Li},
  {\em Robust and accurate stopping criteria for adaptive randomized sampling
  in matrix-free hierarchically semiseparable construction}, SIAM Journal on
  Scientific Computing, 41 (2019), pp.~S61--S85,
  \url{https://doi.org/10.1137/18M1194961}.

\bibitem{greengard1987fast}
{\sc L.~Greengard and V.~Rokhlin}, {\em A fast algorithm for particle
  simulations}, Journal of Computational Physics, 73 (1987), pp.~325--348,
  \url{https://doi.org/10.1016/0021-9991(87)90140-9}.

\bibitem{greengard1997new}
{\sc L.~Greengard and V.~Rokhlin}, {\em A new version of the fast multipole
  method for the {L}aplace equation in three dimensions}, Acta Numerica, 6
  (1997), pp.~229--269, \url{https://doi.org/10.1017/S0962492900002725}.

\bibitem{hackbusch1999sparse}
{\sc W.~Hackbusch}, {\em A sparse matrix arithmetic based on
  {$\mathcal{H}$}-matrices. {P}art {I}: Introduction to
  {$\mathcal{H}$}-matrices}, Computing, 62 (1999), pp.~89--108,
  \url{https://doi.org/10.1007/s006070050015}.

\bibitem{hackbusch2015hierarchical}
{\sc W.~Hackbusch}, {\em Hierarchical Matrices: Algorithms and Analysis},
  vol.~49 of Springer Series in Computational Mathematics, Springer, 2015,
  \url{https://doi.org/10.1007/978-3-662-47324-5}.

\bibitem{hackbusch2002data}
{\sc W.~Hackbusch and S.~B{\"o}rm}, {\em Data-sparse approximation by adaptive
  {$\mathcal{H}^2$}-matrices}, Computing, 69 (2002), pp.~1--35,
  \url{https://doi.org/10.1007/s00607-002-1450-4}.

\bibitem{halko2011finding}
{\sc N.~Halko, P.-G. Martinsson, and J.~A. Tropp}, {\em Finding structure with
  randomness: Probabilistic algorithms for constructing approximate matrix
  decompositions}, SIAM Review, 53 (2011), pp.~217--288,
  \url{https://doi.org/10.1137/090771806}.

\bibitem{jiang2025dmk}
{\sc S.~Jiang and L.~Greengard}, {\em A dual-space multilevel kernel-splitting
  framework for discrete and continuous convolution}, Communications on Pure
  and Applied Mathematics, 78 (2025), pp.~1086--1143,
  \url{https://doi.org/10.1002/cpa.22240}.
\newblock arXiv:2308.00292.

\bibitem{levitt2024linear}
{\sc J.~Levitt and P.-G. Martinsson}, {\em Linear-complexity black-box
  randomized compression of rank-structured matrices}, SIAM Journal on
  Scientific Computing, 46 (2024), pp.~A1747--A1763,
  \url{https://doi.org/10.1137/22M1528574}.

\bibitem{levitt2024coloring}
{\sc J.~Levitt and P.-G. Martinsson}, {\em Randomized compression of
  rank-structured matrices accelerated with graph coloring}, Journal of
  Computational and Applied Mathematics, 451 (2024), p.~116044,
  \url{https://doi.org/10.1016/j.cam.2024.116044}.

\bibitem{lin2011fast}
{\sc L.~Lin, J.~Lu, and L.~Ying}, {\em Fast construction of hierarchical matrix
  representation from matrix-vector multiplication}, Journal of Computational
  Physics, 230 (2011), pp.~4071--4087,
  \url{https://doi.org/10.1016/j.jcp.2011.02.033}.

\bibitem{liu2016parallel}
{\sc X.~Liu, J.~Xia, and M.~V. de~Hoop}, {\em Parallel randomized and
  matrix-free direct solvers for large structured dense linear systems}, SIAM
  Journal on Scientific Computing, 38 (2016), pp.~S508--S538,
  \url{https://doi.org/10.1137/15M1023774}.

\bibitem{malhotra2015pvfmm}
{\sc D.~Malhotra and G.~Biros}, {\em {PVFMM}: A parallel kernel independent
  {FMM} for particle and volume potentials}, Communications in Computational
  Physics, 18 (2015), pp.~808--830,
  \url{https://doi.org/10.4208/CICP.020215.150515SW}.

\bibitem{martinsson2011fast}
{\sc P.-G. Martinsson}, {\em A fast randomized algorithm for computing a
  hierarchically semiseparable representation of a matrix}, SIAM Journal on
  Matrix Analysis and Applications, 32 (2011), pp.~1251--1274,
  \url{https://doi.org/10.1137/100786617}.

\bibitem{martinsson2016compressing}
{\sc P.-G. Martinsson}, {\em Compressing rank-structured matrices via
  randomized sampling}, SIAM Journal on Scientific Computing, 38 (2016),
  pp.~A1959--A1986, \url{https://doi.org/10.1137/15M1016679}.

\bibitem{minden2017recursive}
{\sc V.~Minden, K.~L. Ho, A.~Damle, and L.~Ying}, {\em A recursive
  skeletonization factorization based on strong admissibility}, Multiscale
  Modeling \& Simulation, 15 (2017), pp.~768--796,
  \url{https://doi.org/10.1137/16M1095949}.

\bibitem{nakatsukasa2020fast}
{\sc Y.~Nakatsukasa}, {\em Fast and stable randomized low-rank matrix
  approximation}, arXiv preprint arXiv:2009.11392,  (2020).

\bibitem{piessens1983quadpack}
{\sc R.~Piessens, E.~de~Doncker-Kapenga, C.~W. {\"U}berhuber, and D.~K.
  Kahaner}, {\em {QUADPACK}: A Subroutine Package for Automatic Integration},
  vol.~1 of Springer Series in Computational Mathematics, Springer, Berlin,
  Heidelberg, 1983, \url{https://doi.org/10.1007/978-3-642-61786-7}.

\bibitem{schafer2024sparse}
{\sc F.~Sch{\"a}fer and H.~Owhadi}, {\em Sparse recovery of elliptic solvers
  from matrix-vector products}, SIAM Journal on Scientific Computing, 46
  (2024), pp.~A998--A1025, \url{https://doi.org/10.1137/22M154226X}.

\bibitem{sushnikova2023fmmlu}
{\sc D.~Sushnikova, L.~Greengard, M.~O'Neil, and M.~Rachh}, {\em {FMM-LU}: A
  fast direct solver for multiscale boundary integral equations in three
  dimensions}, Multiscale Modeling \& Simulation, 21 (2023), pp.~1570--1601,
  \url{https://doi.org/10.1137/22M1514040}.

\bibitem{xia2013randomized}
{\sc J.~Xia}, {\em Randomized sparse direct solvers}, SIAM Journal on Matrix
  Analysis and Applications, 34 (2013), pp.~197--227,
  \url{https://doi.org/10.1137/12087116X}.

\bibitem{xia2010superfast}
{\sc J.~Xia, S.~Chandrasekaran, M.~Gu, and X.~S. Li}, {\em Superfast
  multifrontal method for large structured linear systems of equations}, SIAM
  Journal on Matrix Analysis and Applications, 31 (2010), pp.~1382--1411.

\bibitem{yesypenko2025skelfmm}
{\sc A.~Yesypenko, C.~Chen, and P.-G. Martinsson}, {\em A simplified fast
  multipole method based on strong recursive skeletonization}, Journal of
  Computational Physics, 524 (2025), p.~113707,
  \url{https://doi.org/10.1016/j.jcp.2024.113707}.

\bibitem{yesypenko2024slablu}
{\sc A.~Yesypenko and P.-G. Martinsson}, {\em {SlabLU}: A two-level sparse
  direct solver for elliptic {PDE}s}, Advances in Computational Mathematics, 50
  (2024), p.~90, \url{https://doi.org/10.1007/s10444-024-10176-x}.

\bibitem{yesypenko2026randomized}
{\sc A.~Yesypenko and P.-G. Martinsson}, {\em Randomized strong recursive
  skeletonization: Simultaneous compression and {LU} factorization of
  hierarchical matrices using matrix-vector products}, Journal of Scientific
  Computing, 106 (2026), p.~63,
  \url{https://doi.org/10.1007/s10915-025-03157-9}.

\bibitem{ying2004kernel}
{\sc L.~Ying, G.~Biros, and D.~Zorin}, {\em A kernel-independent adaptive fast
  multipole algorithm in two and three dimensions}, Journal of Computational
  Physics, 196 (2004), pp.~591--626,
  \url{https://doi.org/10.1016/j.jcp.2003.11.021}.

\end{thebibliography}

\end{document}